\renewcommand{\k}{\mathbf{k}}
\newcommand{\cov}[2]{\text{Cov}\left[#1, #2\right]}
\newcommand{\tr}{\text{Tr}}
\newcommand{\bs}{{\mathcal B}}
\newcommand{\w}{{\bf w}}
\renewcommand{\u}{{\bf u}}
\renewcommand{\d}{{\bf d}}
\newcommand{\x}{{\bf x}}
\renewcommand{\a}{{\bf a}}
\newcommand{\blambda}{\boldsymbol{ \lambda}}
\newcommand{\K}{{\bf K}}
\newcommand{\bsigma}{\boldsymbol{\sigma}}
\renewcommand{\v}{{\bf v}}
\newcommand{\z}{{\bf z}}
\newcommand{\V}{{\cal V}}
\newcommand{\bigo}{\mathcal{O}}
\newcommand{\A}{{\bf A}}
\newcommand{\B}{{\bf B}}
\newcommand{\M}{{\bf M}}
\newcommand{\U}{{\bf U}}
\renewcommand{\V}{{\bf V}}
\newcommand{\D}{{\bf D}}
\renewcommand{\cov}{{\bf S}}
\newcommand{\ones}{{\bf 1}}
\newcommand{\C}{{\bf C}}
\newcommand{\E}{{\mathbf E}}
\newcommand{\J}{{\mathbf J}}
\newcommand{\N}{{\mathcal N}}
\newcommand{\R}{{\mathbb{R}}}
\newcommand{\I}{{\mathbf I}}
\renewcommand{\P}{{\mathcal P}}
\newcommand{\diag}{{\textbf{diag}}}
\newtheorem{theorem}{Theorem}
\newtheorem{lemma}[theorem]{Lemma}
\newtheorem{corollary}[theorem]{Corollary}
\newtheorem{assumption}{Assumption}
\newtheorem{example}[theorem]{Example}
\theoremstyle{remark}
\newcommand{\comment}[1]{}
\begin{document}

%

%
\twocolumn[

\aistatstitle{Mixing of Stochastic Accelerated Gradient Descent}


\aistatsauthor{  Peiyuan Zhang* \And Hadi Daneshmand* \And  Thomas Hofmann }

\aistatsaddress{ ETH Zurich \And ETH Zurich \And ETH Zurich} 
]

\begin{abstract}
  We study the mixing properties for stochastic accelerated gradient descent (SAGD) on least-squares regression. First, we show that stochastic gradient descent (SGD) and SAGD are simulating the same invariant distribution. Motivated by this, we then establish mixing rate for SAGD-iterates and compare it with those of SGD-iterates. Theoretically, we prove that the chain of SAGD iterates is geometrically ergodic --using a proper choice of parameters and under regularity assumptions on the input distribution. More specifically, we derive an explicit mixing rate depending on the first 4 moments of the data distribution. By means of illustrative examples, we prove that SAGD-iterate chain mixes faster than the chain of iterates obtained by SGD. Furthermore, we highlight applications of the established mixing rate in the convergence analysis of SAGD on realizable objectives. The proposed analysis is based on a novel \textit{non-asymptotic} analysis of products of random matrices. This theoretical result is substantiated and validated by experiments. 
\end{abstract}

\section{Introduction}\label{sec:intro}
Stochastic variants of gradient based optimization methods have become the \textit{de facto} standard optimization technique for large scale learning problems. Trading off statistical and computational aspects, stochastic approximation methods attempt to obtain high statistical accuracy while keeping the computational per-iteration  costs low~\cite{bottou2008tradeoffs}. The vast empirical success of such methods has motivated a growing body of theoretical studies on stochastic approximation in both convex (e.g.\cite{moulines2011non,schmidt2017minimizing,rakhlin2011making}) and non-convex optimization (e.g. \cite{ge2015escaping,daneshmand2018escaping,zhu2018anisotropic}). Most remarkably, the analysis in~\cite{moulines2011non} establishes fast, non-asymptotic convergence rates for Stochastic Gradient Descent (SGD). Despite this growing understanding of SGD, the inner workings of stochastic \textit{accelerated} first-order methods are still not very well-understood.  Inspired by the success of accelerated schemes (such as stochastic momentum and Adam~\cite{kingma2014adam}) in optimization of deep neural networks, interesting recent works are starting to improve the current theoretical understanding of this empirical observation \cite{jain2018accelerating,schmidt2018fast,dieuleveut2017harder}. 

We here contribute to this line of research, starting from the simplest possible setting relevant for machine learning: ordinary least-squares regression. Namely we consider following problem set-up throughout this paper: 
\begin{align}\label{eq:main_leastsquares}
    \begin{split}
       \min_{\w \in \mathbb{R}^d} \Big( f(\w) & = \frac 12 \E_{\z} \left[ f_\z \right] \Big), \\
       \quad f_\z  = \|y-\w^\top \x \|^2 &, \quad \z:= (\x,y) \sim \P, 
    \end{split}
\end{align}
where $\x \in \R^d$ is the input variable and $y \in \R$ is the response variable. We assume that $\x$ is zero-mean with covariance matrix $\mu \I \preceq \cov \preceq L \I$, where $\mu$ is positive.

\paragraph{Mixing time of SGD.}
Stochastic gradient optimizes $f$ through the following iterative scheme: 
\begin{align} \label{eq:sgdupdate}
    \w_{n+1} = \w_n - \gamma \nabla f_{\z_n}(\w_n)
\end{align}
where $\z_n \stackrel{\text{i.i.d.}}{\sim} \P$ and $\gamma$ is a constant stepsize. $\{ \w_k \}_{k=1}^n$ makes a time-homogeneous Markov chain in $\R^d$. Under regularity assumptions on $\cov$, the chain admits a unique invariant distribution denoted by $\pi_\gamma$ \cite{dieuleveut2017bridging}. \cite{dieuleveut2017bridging} proves that this Markov chain enjoys an exponential mixing time with rate $\mu/L$, namely 
\begin{align}
    W_2^2(\nu(\w_n), \pi_\gamma) \leq C (1-c\mu/L)^n
\end{align}
holds, where $W_2$ is Wasserstein-2 distance and $\nu(\w_n)$ denotes the probability measure induced by random variable $\w_n$. 

\paragraph{Stochastic Accelerated Gradient Descent (SAGD).}
Starting with $\w'_{1} = \w'_{0}$, stochastic accelerated gradient descent optimizes $f$ through the following recurrence
\begin{align} \label{eq:our_accelerated_sgd}
    \begin{split}
         \w'_{n+1} = \w'_n & + \beta (\w'_{n}-\w'_{n-1}) \\
         & - \gamma \nabla f_{\z_n}(\w'_{n} + \alpha (\w'_n - \w'_{n-1})),
    \end{split}
\end{align}
where $\z_n \stackrel{\text{i.i.d.}}{\sim} \P$. Although the above sequence is not Markovian, $\{ \u_k := (\w'_{k},\w'_{k-1}) \}_{k=1}^n$ is a Markov chain running on $\R^{2d}$, since
\begin{align} \label{eq:recurrrence_wns}
    \u_{n+1} = \A_n \u_n+ \begin{bmatrix} 
    \gamma y_n \x_n \\ 
    0
    \end{bmatrix} 
\end{align}
where 
\begin{align*}
    \A_n := \begin{bmatrix} 
    (1+\beta) \I- (1+\alpha) \gamma \x_n\x^\top_n & \alpha \gamma \x_n \x^\top_n - \beta \I \\ 
    \I & 0 
    \end{bmatrix}
\end{align*}

\paragraph{A mixing rate for SAGD.}
 The invariant distribution of $\{ \u_k \}_{k=1}^n$ is $(\pi_\gamma, \pi_\gamma)$, where $\pi_\gamma$ is the invariant distribution of the SGD-chain (see Lemma~\ref{lemma:invariance_sagd}). In this regard, it is natural to ask:
\begin{center}
\textit{
    When does $\{ \u_k \}_{k=1}^n$ exhibit better mixing properties compared to $\{ \w_{k} \}_{k=1}^n$? }
\end{center}
Considering that both of chains are simulating the same invariant distribution $\pi_\gamma$, a chain with better mixing properties is more computationally efficient. Notably, better mixing properties often leads to a better convergence rate for the mean and variance of ergodic average. Also, it may lead to better properties for solutions Poisson equation associated the chain. The solution of Poisson equation, in turn, plays an important role in the establishment of Central Limit Theorem for the ergodic average. In this paper, we further highlight a novel application for mixing-analysis: applications in the convergence of SAGD in realizable cases.  Motivated by these applications of mixing properties, this paper aims at characterizing mixing properties of $\{\u_k\}_{k=1}^n$. Specifically, we will prove that there exits a constant $c \in \R_+$ and a $3d \times 3d$-matrix $\C_\theta(\P)$ determined by first 4 moments of $\P$ such that
\begin{align*}
    W_2^2(\nu(\u_n), (\pi_\gamma,\pi_\gamma)) \leq c \| \C_\Theta(\P) \|^n_{\rho_\epsilon}/\epsilon
\end{align*}
holds where $\| \M \|_{\rho_\epsilon}$ denotes pseudospectrum of matrix $\M$, $\P$ belongs to a broad class of distributions, and $\Theta := \{ \gamma, \alpha, \beta \}$. By means of illustrative examples, we show how our results can be employed to derive the accelerated mixing rate $\bigo((1-\sqrt{\mu/L})^n)$. Then, we show that the mixing rate equates the convergence rate of SAGD in realizable cases. The proposed analysis is based on a novel \textit{non-asymptotic} analysis of products of random matrices. Although  the asymptotic analysis of products of i.i.d. random matrices is an old and rich literature~\cite{furstenberg1960products}, non-asymptotic analyses are rare.

\section{Related Works}

Recent results show that -- despite the potential problem of noise instability and error accumulation (see e.g. \cite{devolder2014first}) -- stochastic accelerated methods can indeed be provably faster than SGD in certain settings~\cite{dieuleveut2017harder,jain2018accelerating,schmidt2018fast}. Among these results, \cite{dieuleveut2017harder} and \cite{jain2018accelerating} focus on least-squares regression (the same setting considered in this paper), but the focus of \cite{schmidt2018fast} is on a more general setting of learning halfspaces.  \cite{dieuleveut2017harder} has shown that stochastic Nesterov's acceleration combined with stochastic averaging accelerates the convergence of stochastic gradient descent on quadratic objectives when $\mu=0$. This combination improves the convergence of stochastic gradient descent from $1/n$ to $1/n^2$ for realizable cases, where $\exists \w_*$ such that $f(\w_*)=0$. \cite{schmidt2018fast} shows that accelerated stochastic gradient descent can obtain an accelerated $\mathcal{O}\left(1-\sqrt{\mu/(\rho^2L)}\right)$ rate, if the following strong growth condition over $f_\z$ holds uniformly in $\w$:
\begin{equation} \label{assum:sgc}
    \E_\z  \left[ \| \nabla f_\z(\w) \|^2 \right] \leq \rho \| \nabla f(\w)\|^2.
\end{equation}
 \cite{jain2018accelerating} proves a modified version of SAGD improves the convergence of SGD. Their results rely on a statistical condition number defined as minimum number $\Tilde{\kappa}$ such that  
 \begin{align} \label{eq:statistical}
     \E \left[ \| \x \|_{\cov^{-1}}^2 \x \x^\top  \right]  \preceq \Tilde{\kappa} \cov
 \end{align}
 holds, which allows to prove convergence with rate $\bigo(1-(\sqrt{\mu/(\Tilde{\kappa} L)}))^n$. By proving that $\Tilde{\kappa} \leq L/\mu$, \cite{jain2018accelerating} shows that their method always outperforms SGD. For Gaussian inputs, their method enjoys the the accelerated $\mathcal{O}\left(1-\sqrt{\mu/(d L)}\right)$ rate.
 
 Yet, the goal of this research is different from three valuable piece of works listed above, i.e. \cite{dieuleveut2017harder}, \cite{jain2018accelerating}, and \cite{schmidt2018fast}. Here, we analyze SAGD through the framework of Markov chains. Our goal is extending the established connection between Markov chain and stochastic optimization in \cite{dieuleveut2017bridging}. This paper formulates the connection religiously for SGD. We aim at extending their result to SAGD. 
\section{Preliminaries}
\paragraph{Notations.}
We will repeatedly use eigenvalue decomposition of the covariance matrix $\cov$ as
\begin{align} \label{eq:cov}
    \begin{split}
        \cov = & \U^\top \diag(\bsigma) \U, \quad \bsigma :=[\sigma_1, \dots, \sigma_d] , \\
        & 0 < \sigma_{1}  = \mu \leq \dots  \leq \sigma_d = L.    
    \end{split}
\end{align}
Using $\cov$, we can rewrite the gradient and stochastic gradient of $f$ as follows
\begin{align} \label{eq:grads}
    \nabla f(\w) & = \cov \w - \E \left[ y \x \right],\\ \nabla f_\z(\w) &= \x \x^\top \w - y\x.
\end{align}
Let $[\M]_{ij}$ be the element $(i,j)$ of matrix $\M$. 
  We will repeatedly use the compact notation $\Theta:=\{\alpha,\beta,\gamma\}$ for a set containing the hyperparameters of SAGD. $\|\cdot\|_p$ denotes $p$-norm. For the sake of simplicity we define $\|\cdot\| := \|\cdot\|_2$.

   Our mixing rate is established in terms of Wasserstein-2 metric defined on the set of probability measures on $(\R^{d},\bs(\R^d))$ with bounded second moment, denoted by $\P_2(\R^d)$ \footnote{Notations are borrowed from \cite{dieuleveut2017bridging}}. More precisely, 
  \begin{align} \label{eq:w2_distance}
      W_2^2(\nu,\mu) = \inf_{p \in \Gamma(\nu,\mu)}  \left( \int \| \v - \w \|^2 p(d\v,d\w) \right) 
  \end{align}
  where for all $p \in \Gamma(\nu,\mu)$, $\nu= \int p(.,\w) \mu(d\w)$ and $\mu = \int p(\v,.)\nu(d\v)$
  . Let $\nu(Z)$ be the probability measure induced by the random variable $Z$. Notation $Z \sim \mu$ is equivalent to $\nu(Z) = \mu$. 
  
\paragraph{Pseudospectrum} \label{sec:pseudospectrum}
Let $\sigma(\M)$ be the set of complex eigenvalues of the non-symmetric matrix $\M$; then the spectral radius of $\M$ is $\rho(\M) = \sup\{ |\z|\; | \; \z \in \sigma(\M) \}$. $\epsilon$-pseudospectrum of $\M$, denoted by  $\sigma_{\epsilon}(\M)$, is defined as 
\begin{align*}
    \sigma_{\epsilon}(\M) := \sigma(\M) \cup \{ \z \in C \; | \; \| (\M - \z \I)^{-1} \| \geq 1/\epsilon \}. 
\end{align*}
Pseudospectrum of $\M$ is a genealization of spectral radius for $\epsilon$-pseudospectrum:  
\begin{align*}
    \rho_\epsilon(\M) = \sup\{ |\z| \; | \; \z \in \sigma_\epsilon(\M) \}.
\end{align*}
As the next Lemma states, pseudospectrum bounds the spectral norm of power of non-symmetric matrices. 
\begin{lemma}[Matrix power and Pseudospectrum (Theorem 9.2\cite{jensen09})] \label{lemma:pseudospectrum_power}
The following holds for any $\epsilon$ and all $n$: 
\begin{align*}
    \| \M^n \| \leq \frac{(\rho_{\epsilon}(\M))^{n+1}}{\epsilon}
\end{align*}
\end{lemma}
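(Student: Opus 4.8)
The plan is to represent $\M^n$ as a Cauchy integral of its resolvent and then bound the integrand using the definition of the pseudospectrum directly. Concretely, for any closed contour $\Gamma$ enclosing the spectrum $\sigma(\M)$, the holomorphic functional calculus applied to $f(z)=z^n$ gives
\[
\M^n = \frac{1}{2\pi i}\oint_\Gamma z^n (z\I - \M)^{-1}\, dz .
\]
The observation that ties this to the pseudospectrum is elementary: by the definition of $\sigma_\epsilon(\M)$, every $z$ lying strictly outside $\sigma_\epsilon(\M)$ satisfies the resolvent bound $\norm{(z\I - \M)^{-1}} < 1/\epsilon$. So if the contour can be pushed entirely outside the pseudospectrum, the integrand is controlled.

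Next I would take $\Gamma$ to be the circle centered at the origin of radius $R$, for an arbitrary $R > \rho_\epsilon(\M)$. Since $\sigma(\M) \subseteq \sigma_\epsilon(\M)$, every eigenvalue obeys $|z| \le \rho_\epsilon(\M) < R$, so this circle does enclose $\sigma(\M)$ and the integral representation above is valid. Crucially, by definition of $\rho_\epsilon(\M)$ as the largest modulus attained on $\sigma_\epsilon(\M)$, every point of this circle lies strictly outside $\sigma_\epsilon(\M)$, so the uniform bound $\norm{(z\I - \M)^{-1}} < 1/\epsilon$ holds along all of $\Gamma$.

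The estimate is then a direct application of the standard length-times-max bound for contour integrals. On the circle the integrand has norm at most $|z|^n\,\norm{(z\I - \M)^{-1}} \le R^n/\epsilon$, and the contour has length $2\pi R$, whence
\[
\norm{\M^n} \le \frac{1}{2\pi}\,(2\pi R)\,\frac{R^n}{\epsilon} = \frac{R^{n+1}}{\epsilon}.
\]
Since this holds for every $R > \rho_\epsilon(\M)$, letting $R \downarrow \rho_\epsilon(\M)$ and using continuity of $R \mapsto R^{n+1}/\epsilon$ yields the claimed inequality $\norm{\M^n} \le \rho_\epsilon(\M)^{n+1}/\epsilon$.

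Once the functional-calculus representation is in place, the argument is essentially routine, so there is no serious obstacle. The only point that needs a little care is the limiting step $R \downarrow \rho_\epsilon(\M)$: on the circle $|z| = \rho_\epsilon(\M)$ itself the resolvent bound need not be strict, because the pseudospectrum may touch that circle, and so one cannot integrate directly over it. Working with radii $R > \rho_\epsilon(\M)$ and passing to the limit sidesteps the need to understand the resolvent on the boundary $\partial\sigma_\epsilon(\M)$, which is exactly where the analysis would otherwise become delicate.
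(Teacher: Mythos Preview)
Your argument is correct and is in fact the standard proof of this bound via the Cauchy integral representation of $\M^n$ and the resolvent estimate coming straight from the definition of $\sigma_\epsilon(\M)$. The paper itself does not prove this lemma at all: it is quoted as Theorem~9.2 of \cite{jensen09} and used as a black box, so there is no in-paper proof to compare against. Your write-up would serve perfectly well as the missing justification; the only cosmetic point is that the limiting step $R\downarrow\rho_\epsilon(\M)$ is exactly how the cited reference handles the boundary issue as well, so nothing unusual is happening there.
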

 Pseudospectrum is mainly developed for perturbation analysis of non-hermitian matrix~\cite{jensen09}. The result of next lemma shows how a pertubation of a matrix reflects in its pseudospectrum.
\begin{lemma}[Robustness of Pseudospectrum (Theorem 5.12.\cite{jensen09})] \label{sec:robust_pseudo}
For all matrices $\M$, the following holds 
\begin{align*}
    \rho_{\epsilon}(\A + \M) \leq \rho_{\epsilon+\|\M\|}(\A)
\end{align*}
\end{lemma}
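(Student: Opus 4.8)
The plan is to reduce the pseudospectral-radius inequality to a set inclusion between the underlying pseudospectra, and then to prove that inclusion with a standard resolvent (Neumann-series) perturbation estimate. Since $\rho_\epsilon(\cdot)$ is by definition the supremum of $|z|$ over the $\epsilon$-pseudospectrum, it suffices to establish
\[
    \sigma_\epsilon(\A + \M) \subseteq \sigma_{\epsilon + \|\M\|}(\A),
\]
because taking $\sup|z|$ on both sides then yields $\rho_\epsilon(\A+\M)\le\rho_{\epsilon+\|\M\|}(\A)$ directly. I would therefore fix an arbitrary $z\in\sigma_\epsilon(\A+\M)$ and aim to show $z\in\sigma_{\epsilon+\|\M\|}(\A)$, working with the resolvent characterization of the pseudospectrum supplied by the definition in Section~\ref{sec:pseudospectrum}.

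The cleanest route is the contrapositive on the resolvent condition. Assume $z\notin\sigma_{\epsilon+\|\M\|}(\A)$, which means $\A-z\I$ is invertible and its resolvent $R:=(\A-z\I)^{-1}$ satisfies $\|R\| < 1/(\epsilon+\|\M\|)$; the goal becomes $\|((\A+\M)-z\I)^{-1}\| < 1/\epsilon$, i.e. $z\notin\sigma_\epsilon(\A+\M)$. First I would write the factorization $(\A+\M)-z\I = (\A-z\I)(\I + R\M)$. Since $\|R\|\,\|\M\| < \|\M\|/(\epsilon+\|\M\|) < 1$, the factor $\I+R\M$ is invertible by the Neumann series with $\|(\I+R\M)^{-1}\|\le (1-\|R\|\,\|\M\|)^{-1}$, so the product is invertible and $((\A+\M)-z\I)^{-1} = (\I+R\M)^{-1}R$. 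Submultiplicativity of the operator norm then gives
\[
    \|((\A+\M)-z\I)^{-1}\| \le \frac{\|R\|}{1-\|R\|\,\|\M\|}.
\]
A one-line algebraic check shows this upper bound is strictly below $1/\epsilon$ exactly when $\|R\|(\epsilon+\|\M\|) < 1$, which is precisely the hypothesis; this is where the shifted parameter $\epsilon+\|\M\|$ appears naturally rather than by fiat.

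The hard part, or at least the part requiring care, will be the bookkeeping at the spectrum itself, where the convenient Neumann-series estimate presupposes invertibility and the resolvents may blow up. The argument above cleanly handles every $z\notin\sigma(\A)$, so what remains is a short case analysis at the genuine eigenvalues. If $z\in\sigma(\A)$ there is nothing to prove, since trivially $z\in\sigma_{\epsilon+\|\M\|}(\A)$. If instead $z\in\sigma(\A+\M)$ but $z\notin\sigma(\A)$, the same factorization forces $\I+R\M$ to be singular, which by the Neumann criterion forces $\|R\|\,\|\M\|\ge\|R\M\|\ge 1$, hence $\|R\|\ge 1/\|\M\|\ge 1/(\epsilon+\|\M\|)$, again placing $z$ in $\sigma_{\epsilon+\|\M\|}(\A)$. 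Combining the generic resolvent estimate with these two boundary cases gives the inclusion, and the lemma follows. The only genuine ingredients are submultiplicativity and the Neumann bound, so once the case split is set up the remaining computations are routine.
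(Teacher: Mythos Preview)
The paper does not supply its own proof of this lemma; it simply cites it as Theorem~5.12 of \cite{jensen09}. Your resolvent/Neumann-series argument is correct and is essentially the standard proof of this perturbation inclusion, so there is nothing to compare against here.
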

Next lemma establishes the connection between pseudospectrum and spectral radius of a matrix. 
\begin{lemma}[Bauer--Fike (Theorem 5.11 of \cite{jensen09})] \label{lemma:baur_fike}
Let $\M$ be a diagonalizable square matrix such that $\M = \V \Lambda \V^{-1}$. Then for $\epsilon>0$, the following holds: 
\begin{align}
    \rho_\epsilon(\M) \leq \rho(\M) + \kappa \epsilon
\end{align}
where $\kappa$ is the condition number of $\V$.
\end{lemma}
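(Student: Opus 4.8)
The plan is to prove the bound pointwise: it suffices to show that every $z \in \sigma_\epsilon(\M)$ satisfies $|z| \le \rho(\M) + \kappa\epsilon$, and then take the supremum over $\sigma_\epsilon(\M)$ to obtain $\rho_\epsilon(\M) \le \rho(\M) + \kappa\epsilon$. Following the two-part definition of the pseudospectrum, I would split into two cases. If $z \in \sigma(\M)$, then $|z| \le \rho(\M)$ by the definition of the spectral radius, and the claim is immediate since $\kappa\epsilon \ge 0$. The substance is therefore the second case, where $z \notin \sigma(\M)$ but the resolvent blows up, i.e. $\|(\M - z\I)^{-1}\| \ge 1/\epsilon$.

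For that case, the key step is to exploit the diagonalization $\M = \V\Lambda\V^{-1}$ to rewrite the resolvent as $(\M - z\I)^{-1} = \V(\Lambda - z\I)^{-1}\V^{-1}$ and bound its norm by submultiplicativity: $\|(\M - z\I)^{-1}\| \le \|\V\|\,\|(\Lambda - z\I)^{-1}\|\,\|\V^{-1}\| = \kappa\,\|(\Lambda - z\I)^{-1}\|$, where $\kappa = \|\V\|\,\|\V^{-1}\|$. Since $\Lambda - z\I$ is diagonal with entries $\lambda_i - z$, its inverse is again diagonal, and the spectral norm of a diagonal matrix equals the largest modulus of its diagonal entries, so that $\|(\Lambda - z\I)^{-1}\| = 1/\min_i |\lambda_i - z| = 1/\mathrm{dist}(z, \sigma(\M))$.

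Combining the two bounds gives $1/\epsilon \le \|(\M - z\I)^{-1}\| \le \kappa/\mathrm{dist}(z,\sigma(\M))$, hence $\mathrm{dist}(z, \sigma(\M)) \le \kappa\epsilon$. Thus there exists an eigenvalue $\lambda$ of $\M$ with $|z - \lambda| \le \kappa\epsilon$, and the triangle inequality yields $|z| \le |\lambda| + |z-\lambda| \le \rho(\M) + \kappa\epsilon$, which is exactly the desired pointwise bound. The argument is essentially the classical Bauer--Fike perturbation estimate recast through the resolvent characterization of the pseudospectrum. The only point requiring genuine care is the identification of $\|(\Lambda - z\I)^{-1}\|$ with the reciprocal distance to the spectrum, which relies on the diagonal structure and the explicit form of the spectral norm for diagonal matrices, together with keeping the definition $\kappa = \|\V\|\,\|\V^{-1}\|$ consistent throughout. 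Since everything reduces to these elementary submultiplicative estimates, I do not anticipate any obstacle beyond this bookkeeping.
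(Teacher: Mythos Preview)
Your argument is correct and is exactly the standard Bauer--Fike proof via the resolvent: diagonalize, bound the resolvent by $\kappa/\mathrm{dist}(z,\sigma(\M))$, and conclude. The paper does not give its own proof of this lemma but simply cites it as Theorem~5.11 of \cite{jensen09}, so there is nothing to compare against; your write-up would serve as a self-contained justification.
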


\paragraph{Input distribution.}
This paper focuses on a structured input distribution. 
\begin{assumption}[Symmetric input] \label{assume:symmetric_inputs}
$\x$ is generated by an orthogonal transformation of a random vector $\v$ whose coordinates have symmetric distribution. More precisely,
\begin{align} \label{eq:symmetric_inputs}
    \x & = \U \v, \quad \E \left[ \v_i^2 \right] = \sigma_i, \\
    \E \left[ \v_i^4 \right] & = k_i, \quad \k := [k_1, \dots, k_d],
\end{align}
where $\U \in \R^{d\times d}$ is an orthogonal matrix  and $\v \in \R^d $ is from a symmetric distribution, i.e. $v_i$ is distributed as $-v_i$\footnote{Exploiting symmetricity of $\v$, one can check that $\U$ is equal to those of Eq.~\eqref{eq:cov}.}.  
\end{assumption}
The above assumption simplifies our theoretical analysis. Notably, all results can be extended to the case where coordinates of $\v$, defined in the last assumption, are independent random variable. We further remark that the above assumption naturally holds in some practical applications, such as speech recognition.

\section{Invariant distribution}
Leveraging the invariance property, next lemma proves that the invariant distribution of SAGD-iterates simulates the same distribution as SGD-iterates.
\begin{lemma} \label{lemma:invariance_sagd}
 If chain $\{ \u_k \}_{k=1}^n$ obtained by the recurrence of Eq.~\eqref{eq:recurrrence_wns} admits a unique invariant distribution, then the invariant distribution is $(\pi_\gamma, \pi_\gamma)$ where $\pi_\gamma$ is the invariant distribution of SGD-iterates. 
\end{lemma}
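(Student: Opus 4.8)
The plan is to turn the uniqueness hypothesis into the backbone of the argument: once $\{\u_k\}$ is assumed to possess a \emph{unique} invariant distribution, it suffices to exhibit one stationary law and show it has the stated form, since uniqueness then forces it to be \emph{the} invariant distribution. I would first record the characterization of $\pi_\gamma$ as the unique fixed point of the SGD transition — if $\w\sim\pi_\gamma$ and $\z=(\x,y)\sim\P$ is drawn independently, then $(\I-\gamma\x\x^\top)\w+\gamma y\x\sim\pi_\gamma$ — and take this as the object to match.

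Writing one step of Eq.~\eqref{eq:recurrrence_wns} back in the original coordinates gives
\begin{align*}
\w'_{n+1} = (1+\beta)\w'_n - \beta\w'_{n-1} - \gamma\x_n\x_n^\top\big((1+\alpha)\w'_n - \alpha\w'_{n-1}\big) + \gamma y_n\x_n,
\end{align*}
and the algebraic observation I would build on is that all of the momentum/extrapolation bookkeeping ($\alpha,\beta$) cancels the moment the two blocks coincide: setting $\w'_n=\w'_{n-1}$ collapses the drift to exactly $(\I-\gamma\x_n\x_n^\top)\w'_n+\gamma y_n\x_n$, the SGD map that fixes $\pi_\gamma$. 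This is the ``invariance property'' the statement alludes to. Since $\z_n$ is drawn independently of $(\w'_n,\w'_{n-1})$, combining this reduction with the fixed-point characterization of $\pi_\gamma$ shows that if the incoming state were drawn from the diagonal embedding $\w'_n=\w'_{n-1}\sim\pi_\gamma$, the updated first block would again be $\pi_\gamma$-distributed, and the updated second block — which is literally the old first block — likewise.

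The step I expect to be the real obstacle is precisely the gap between this clean diagonal computation and the genuine stationary law: consecutive iterates $\w'_n$ and $\w'_{n-1}$ are \emph{not} equal, so the invariant distribution is not the diagonal embedding and I cannot naively substitute $\w'_n=\w'_{n-1}$. To make the verification rigorous I would have to control the joint coupling of the two $d$-dimensional blocks — tracking the cross-covariance and, because the stochastic gradient carries the factor $\x_n\x_n^\top$, the relevant fourth-moment terms of $\P$ — and show the resulting joint law is stationary; here Assumption~\ref{assume:symmetric_inputs} on the input distribution is the natural tool for keeping this bookkeeping tractable. As a preliminary sanity check before committing to the full distributional argument, I would verify that the first moments agree: taking expectations in the transition and using independence yields $\cov\,\E[\w'_\infty]=\E[y\x]$, i.e.\ mean $\wstar$, matching the mean of $\pi_\gamma$. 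Once a stationary law of the required form is exhibited, the assumed uniqueness closes the argument and identifies the invariant distribution as $(\pi_\gamma,\pi_\gamma)$.
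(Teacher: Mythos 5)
Your proposal follows the same route as the paper's proof---exhibit a stationary law with $\pi_\gamma$-marginals and let uniqueness do the rest---and you correctly isolate the same key algebraic fact, namely that on the diagonal $\w'_n=\w'_{n-1}$ the SAGD drift collapses to the SGD map. But the proof is not completed: the step you yourself flag as ``the real obstacle'' (the invariant law of $\{\u_k\}$ is not supported on the diagonal, so substituting $\w'_n=\w'_{n-1}$ is not legitimate) is exactly the step that carries the entire content of the lemma, and your proposal leaves it as a program (``track the cross-covariance and the fourth moments and show the resulting joint law is stationary'') rather than an argument. For comparison, the paper's own proof does not close this gap either: it writes $\w'_2=\widehat{\w}+(\beta\I-\alpha\gamma\x\x^\top)(\w'_1-\w'_0)$, sets $\w'_0=\w'_1\sim\pi_\gamma$, and concludes $\w'_2\sim\pi_\gamma$. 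That only shows that the diagonal embedding of $\pi_\gamma$ is pushed, after one step, to a law with $\pi_\gamma$-marginals; the image is no longer diagonal, so no invariant measure has been exhibited. Your instinct that the substitution cannot be made ``naively'' is the correct one.

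Moreover, the completion you sketch would run into genuine trouble, because the cross-covariance term you propose to track actually changes the stationary marginal. In one dimension with $y=xw_*+\epsilon$, writing $v_n=\w'_n-w_*$, $A=(1+\beta)-\gamma(1+\alpha)x^2$ and $B=\alpha\gamma x^2-\beta$ (so that $A+B=1-\gamma x^2$ is the SGD multiplier), stationarity of $V_1=\E[v_n^2]$ and $V_2=\E[v_nv_{n-1}]$ forces $V_2=\E[A]V_1/(1-\E[B])$ and
\begin{align*}
V_1\Bigl(1-\E[A^2]-\E[B^2]-\tfrac{2\E[AB]\,\E[A]}{1-\E[B]}\Bigr)=\gamma^2\,\E[x^2]\,\E[\epsilon^2],
\end{align*}
whereas the SGD stationary variance solves $V\bigl(1-\E[(A+B)^2]\bigr)=\gamma^2\,\E[x^2]\,\E[\epsilon^2]$. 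These agree only if $\E[AB]=0$ or $\E[A]+\E[B]=1$, and the latter reads $\gamma\E[x^2]=0$; already for heavy ball ($\alpha=0$, $\beta>0$) one gets $V_1\approx\gamma\E[\epsilon^2]/(2(1-\beta))$ versus $V\approx\gamma\E[\epsilon^2]/2$, the familiar effective-stepsize inflation. So outside the realizable case (where both invariant measures degenerate to a Dirac at $\w_*$) the marginal of the SAGD-invariant law does not equal $\pi_\gamma$, and neither your sketched completion nor the paper's diagonal substitution can establish the lemma as stated. Your ``sanity check'' on first moments passes precisely because the discrepancy first appears at second order.
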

\begin{proof} 
Proof is based on a simple application of the invarince property. Suppose that $\u_1 =[\w'_1,\w'_0]$ is drawn from the invariant distribution associated with $\{\u_k\}_{k=1}^n$. Since $[\w'_{2},\w'_1]$ is distributed as $[\w'_1, \w'_{0}]$, $\w'_0$ and $\w'_1$ are identically distributed. Since the invariant distribution is assume to be unique, we need to show that $(\pi_\gamma, \pi_\gamma)$, i.e. $\w'_{0,1} \sim \pi_\gamma$, is invariant with respect to the SAGD-update in Eq.~\eqref{eq:our_accelerated_sgd}. The SAGD update for the particular case of ridge-regression can be written alternatively as 
\begin{align*}
    \w'_{2} & = \widehat{\w} + \left( \beta \I - \gamma\x \x^\top\right) \left( \w'_{1} - \w'_{0} \right),
\end{align*}
where $\widehat{\w} = \w'_1 - \gamma \nabla f_{\z} (\w'_1)$. Since $\pi_\gamma$ is the invariant with respect to SGD-update, $\widehat{\w} \sim \pi_\gamma$. It remains to prove that $\w'_2 \sim \pi_\gamma$. 
Recall the definition of $\Gamma(\nu(\w'_2),\nu(\widehat{\w}))$ used in $W_2$ notation at Eq.~\eqref{eq:w2_distance}. Suppose that $\w'_0 = \w'_1 \sim \pi_\gamma$. For this particular case, the joint distribution $(\nu(\w'_2),\nu(\widehat{\w}))$ belongs $\Gamma(\nu(\w'_2),\nu(\widehat{\w}))$, hence $W_2(\nu(\w'_2),\nu(\widehat{\w}))=0$. This concludes the proof: $\w'_2 \sim \pi_\gamma$. 
\end{proof}
\section{Mixing analysis}
In the last section, we prove that SAGD and SGD are simulating the same invariant distribution. Yet, the mixing time for SAGD is unknown to the best of our knowledge. In this section, we prove that SAGD-chain is geometrically ergodic. 
\paragraph{A coupling analysis.} Similar to the analysis of SGD in \cite{dieuleveut2017bridging}, we propose a coupling analysis for SAGD. Consider two sequences $\{\u_{k}^{(0)}\}_{k=1}^n $ and $\{\u_{k}^{(1)}\}_{k=1}^n$ of SAGD-iterates starting from two different initial random vectors $\u^{(0)}_0$ and $\u^{(1)}_{0}$, respectively. These sequences are assumed to be coupled by sharing the same sequence of random variables $\{\z_{k}:=(\x_k,y_k)\}_{k=1}^n$ in the recurrence of Eq.~\eqref{eq:our_accelerated_sgd}. More precisely, these sequences are obtained by following iterative schemes: 
\begin{align*}
    \u_{n+1}^{(i)} = \A_n \u_{n}^{(i)} + \begin{bmatrix} 
    \gamma y_n \x_n \\ 
    0
    \end{bmatrix}, \quad i \in \{1, 2 \}. 
\end{align*}
Next Theorem proves that probability measures $\nu(\u_n^{(0)})$ converges to $\nu(\u^{(1)}_n)$ in an exponential rate. 
\begin{theorem} \label{thm:convergence_char}
Suppose that Assumption~\ref{assume:symmetric_inputs} holds. Let $\{\u_{k}^{(0)}\}_{k=1}^n $ and $\{\u_{k}^{(1)}\}_{k=1}^n$ be two sequences of SAGD-iterates coupled with $\{\z_{k}\}_{k=1}^n$; then, 
\begin{align*}
     W_2^2(\nu(\u_n^{(0)}),\nu(\u_n^{(1)}))\leq  c \| \C_{\Theta}(\P) \|_{\rho_\epsilon}^{n+1}  /\epsilon,
\end{align*}
holds for all $\epsilon>0$ where \footnote{Vectors $\bsigma \in \R^d $, $c:= 18 d^{3/2} \E \| \u_0^{(1)}- \u_0^{(0)}\|^2 $ and $\k \in \R^d $ are defined in Eq.~\eqref{eq:cov} and \eqref{eq:symmetric_inputs}, respectively.} 
\begin{align} 
    \C_{\Theta}(\P) & = \begin{bmatrix} \D_1^2 + (1+\alpha)^2 \K & 2 \D_1 & \I \\ 
     \D_1 \D_2 - \alpha(1+\alpha) \K& \D_2 & 0 \\ 
     \D_2^2 +\alpha^2 \K & 0 & 0
    \end{bmatrix}  \label{eq:cmatrix}
\end{align}
and
\begin{align*}\nonumber
    \D_1 & = (1+\beta) \I - \gamma (1+\alpha) \diag(\bsigma), \\ 
    \D_2 & =  \alpha \gamma \diag(\bsigma) - \beta \I \\
    \K & = \gamma^2 \left( \diag(\k - 2(\bsigma)^2) + \bsigma \bsigma^\top \right). 
\end{align*} 
\end{theorem}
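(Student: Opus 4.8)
My starting point is to track the coupled difference $\bdelta_n := \u_n^{(0)} - \u_n^{(1)}$. Because both chains are driven by the same $\z_n=(\x_n,y_n)$, the inhomogeneous term $[\gamma y_n\x_n;\,0]$ in Eq.~\eqref{eq:recurrrence_wns} cancels, leaving the homogeneous recursion $\bdelta_{n+1}=\A_n\bdelta_n$, so that $\bdelta_n=\A_{n-1}\cdots\A_0\,\bdelta_0$ is a product of i.i.d.\ random matrices applied to $\bdelta_0$. Since $(\u_n^{(0)},\u_n^{(1)})$ is by construction a valid coupling of the two marginals, the definition \eqref{eq:w2_distance} gives $W_2^2(\nu(\u_n^{(0)}),\nu(\u_n^{(1)}))\le \E\|\bdelta_n\|^2$. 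The whole problem is thus reduced to a non-asymptotic bound on the second moment of a random-matrix product, which I would control through its induced linear map on second moments.

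To expose the structure I would rotate into the eigenbasis of $\cov$. Writing $\x_n=\U\v_n$ (Assumption~\ref{assume:symmetric_inputs}) and setting $\tilde{\a}_n$ to be $\U^\top$ applied to the top block of $\bdelta_n$, the bottom block of $\bdelta_n$ is just the delayed copy $\tilde{\a}_{n-1}$, so the dynamics collapse to the two-step recursion $\tilde{\a}_{n+1}=\big((1+\beta)\I-(1+\alpha)\gamma\v_n\v_n^\top\big)\tilde{\a}_n+\big(\alpha\gamma\v_n\v_n^\top-\beta\I\big)\tilde{\a}_{n-1}$, equivalently a per-coordinate $2\times2$ random transfer matrix coupled across coordinates only through the scalars $\v_n^\top\tilde{\a}_n$ and $\v_n^\top\tilde{\a}_{n-1}$. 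I would then introduce the state $\s_n\in\R^{3d}$ collecting, for each coordinate $i$, the three second moments $\E[\tilde a_{n,i}^2]$, $\E[\tilde a_{n,i}\tilde a_{n-1,i}]$ and $\E[\tilde a_{n-1,i}^2]$.

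The key step is to show that this $3d$-dimensional set of diagonal and paired moments is \emph{closed} under the second-moment map and to identify the resulting operator with $\C_\Theta(\P)$. Since $\A_n$ (hence $\v_n$) is independent of $\bdelta_n$, taking conditional expectations reduces everything to first through fourth moments of $\v_n$. Here Assumption~\ref{assume:symmetric_inputs} does the essential work: sign-symmetry forces all odd and genuinely mixed moments (e.g.\ $\E[v_i^2 v_j v_\ell]$ with $j\neq\ell$, and $\E[v_i^3 v_j]$) to vanish, so off-diagonal coordinate moments never feed back into the tracked diagonal/paired ones and the system indeed closes on $\R^{3d}$. Carrying out the expectation, the deterministic (mean) multipliers produce the diagonal matrices $\D_1=(1+\beta)\I-\gamma(1+\alpha)\diag(\bsigma)$ and $\D_2=\alpha\gamma\diag(\bsigma)-\beta\I$, whose squares and products populate the $\D_1^2,\D_1\D_2,\D_2^2$ entries; the fluctuations of $\gamma\v_n\v_n^\top$ produce $\K$, whose diagonal $\gamma^2(k_i-\sigma_i^2)$ is the variance of $\gamma v_i^2$ and whose off-diagonal $\gamma^2\sigma_i\sigma_j$ (the rank-one $\bsigma\bsigma^\top$ piece) encodes the cross-coordinate coupling through $\E[v_i^2 v_j^2]=\sigma_i\sigma_j$. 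Assembling the nine blocks yields $\s_{n+1}=\C_\Theta(\P)\,\s_n$, hence $\s_n=\C_\Theta(\P)^n\s_0$.

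Finally I would recombine. Orthogonality of $\U$ gives $\E\|\bdelta_n\|^2=\sum_i\big(\E[\tilde a_{n,i}^2]+\E[\tilde a_{n-1,i}^2]\big)$, which is a fixed linear functional of $\s_n=\C_\Theta(\P)^n\s_0$; bounding it by $\|\C_\Theta(\P)^n\|\,\|\s_0\|$ up to dimension-dependent norm-equivalence factors, and controlling $\|\s_0\|$ by $\E\|\bdelta_0\|^2$ via Cauchy--Schwarz on the paired moments, collects the constant $c=18d^{3/2}\E\|\bdelta_0\|^2$. Applying Lemma~\ref{lemma:pseudospectrum_power} to the \emph{non-symmetric} matrix $\C_\Theta(\P)$ converts $\|\C_\Theta(\P)^n\|$ into $\|\C_\Theta(\P)\|_{\rho_\epsilon}^{n+1}/\epsilon$, giving the claimed bound for every $\epsilon>0$. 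The main obstacle is the crux of the third paragraph: proving that the symmetry assumption exactly annihilates the mixed moments so that the $4d^2$-dimensional second-moment dynamics restrict to a closed $3d$-dimensional system, and then matching the reduced operator block-by-block with $\C_\Theta(\P)$ (in particular getting the fourth-moment matrix $\K$, including its rank-one cross term, correct); the remaining pseudospectral and norm-equivalence steps are comparatively routine.
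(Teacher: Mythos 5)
Your proposal follows essentially the same route as the paper: both cancel the inhomogeneous term via the coupling, reduce to a product of i.i.d.\ random matrices, use Assumption~\ref{assume:symmetric_inputs} to show that the second-moment dynamics close on a $3d$-dimensional linear system built from $\D_1,\D_2,\K$, and finish with the pseudospectrum power bound of Lemma~\ref{lemma:pseudospectrum_power}. The only difference is bookkeeping: the paper propagates the backward Gram matrix $\M_n=\E[\B_n^\top\B_n]$ (whose block eigenvalues satisfy $\a_n=\C_\Theta(\P)^n\ones$), whereas you propagate the forward state moments $\s_n$, and the operator you would actually assemble is $\diag(\I,2\I,\I)^{-1}\,\C_\Theta(\P)^\top\,\diag(\I,2\I,\I)$ rather than $\C_\Theta(\P)$ itself (e.g.\ your $(1,2)$ block comes out as $2\D_1\D_2-2\alpha(1+\alpha)\K$, not $2\D_1$, and the noise feeds off all three of $s^{(1)},s^{(2)},s^{(3)}$) --- but since this is a transpose composed with a similarity by a matrix of condition number $2$, the pseudospectral radius and hence the claimed rate are unaffected up to constants.
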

Section~\ref{sec:convergence_char_proof} outlines the proof of the last Theorem. An immediate consequence of the above result is the uniqueness of the invariant distribution when $\| \C_{\Theta}(\P)\|_{\rho_\epsilon}<1$. Later, we will show how we can choose parameters to achieve a fast mixing. 
\paragraph{An exponential rate for Ergodicity}
Replacing $\u^{(0)}_0 \sim (\pi_\gamma, \pi_\gamma)$ into the result of last Theorem leads to a mixing rate for SAGD. Next Corollary states this mixing result.  
\begin{corollary} [Mixing of SAGD] \label{cor:mixing_SAGD}
Suppose that Assumption~\ref{assume:symmetric_inputs} holds and $\{ \u_k \}_{k=1}^n$ are obtained from Eq.~\eqref{eq:our_accelerated_sgd}; then,
\begin{align}
     W_2^2(\nu(\u_k),(\pi_\gamma,\pi_\gamma))\leq  c' \| \C_{\Theta}(\P) \|_{\rho_\epsilon}^{n+1}  /\epsilon
\end{align}
holds for all $\epsilon>0$ as long as $\| \C_{\Theta}(\P) \|_{\rho_\epsilon}<1$, where $\C_\Theta(\P)$ is defined in Eq.~\eqref{eq:cmatrix} and constant \[ c':=18d^{3/2}\E \left[ \| \u_0 - \E_{\u \sim (\pi_\gamma,\pi_\gamma)} \left[ \u \right] \| \right]. \]
\end{corollary}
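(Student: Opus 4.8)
The plan is to obtain Corollary~\ref{cor:mixing_SAGD} as a direct specialization of Theorem~\ref{thm:convergence_char}, taking one of the two coupled chains to be initialized at the stationary law. Beyond the Theorem, the only structural input is Lemma~\ref{lemma:invariance_sagd}: under the standing hypothesis $\|\C_\Theta(\P)\|_{\rho_\epsilon}<1$ the chain $\{\u_k\}$ admits a unique invariant distribution, and that distribution is $(\pi_\gamma,\pi_\gamma)$. I would first record existence and uniqueness. Iterating the shared-noise coupling of Theorem~\ref{thm:convergence_char} shows that $W_2$ between the laws of two trajectories contracts geometrically; applied to the one-step transition map on $(\P_2(\R^{2d}),W_2)$ this makes $\{\nu(\u_n)\}$ a Cauchy sequence in a complete space, hence convergent to a unique fixed point of the transition, which Lemma~\ref{lemma:invariance_sagd} identifies with $(\pi_\gamma,\pi_\gamma)$.

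The core of the argument is then a single substitution. I would run two SAGD trajectories driven by the common noise $\{\z_k\}$: let $\{\u^{(0)}_k\}$ start from $\u^{(0)}_0\sim(\pi_\gamma,\pi_\gamma)$ and let $\{\u^{(1)}_k\}$ start from the actual initial condition $\u^{(1)}_0=\u_0$. By invariance $\u^{(0)}_n\sim(\pi_\gamma,\pi_\gamma)$ for every $n$, while $\u^{(1)}_n\sim\nu(\u_n)$. Because this shared-noise coupling is one admissible joint law in the infimum defining the Wasserstein distance, it furnishes the upper bound
\begin{align*}
 W_2^2(\nu(\u_n),(\pi_\gamma,\pi_\gamma)) \le \E\|\u^{(1)}_n-\u^{(0)}_n\|^2,
\end{align*}
and the right-hand side is exactly the quantity that Theorem~\ref{thm:convergence_char} bounds by $c\,\|\C_\Theta(\P)\|_{\rho_\epsilon}^{n+1}/\epsilon$. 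This already yields the claimed geometric mixing rate with the same pseudospectral base.

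It remains only to match the constant, which is routine bookkeeping rather than a real difficulty. The constant in Theorem~\ref{thm:convergence_char} is $c=18d^{3/2}\E\|\u^{(1)}_0-\u^{(0)}_0\|^2$, so I am free to choose the joint law of the two initializations; coupling $\u_0$ to a draw from $(\pi_\gamma,\pi_\gamma)$ and expanding the squared norm about the stationary mean $\E_{\u\sim(\pi_\gamma,\pi_\gamma)}[\u]$ reduces $\E\|\u^{(1)}_0-\u^{(0)}_0\|^2$ to the displacement of $\u_0$ from that mean (together with the fixed spread of the stationary law), which is what produces $c'$.

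I expect no serious obstacle here: the heavy lifting --- the non-asymptotic control of the random matrix products encoded in $\C_\Theta(\P)$ and the pseudospectral bound via Lemma~\ref{lemma:pseudospectrum_power} --- is already absorbed into Theorem~\ref{thm:convergence_char}. The two points needing care are (i) pinning down the invariant distribution rigorously through the contraction, so that the hypothesis $\|\C_\Theta(\P)\|_{\rho_\epsilon}<1$ legitimately invokes Lemma~\ref{lemma:invariance_sagd}, and (ii) the constant-matching step, where one must ensure that the initialization of $\{\u^{(0)}_k\}$ is genuinely distributed as $(\pi_\gamma,\pi_\gamma)$ so that stationarity is preserved along the coupled run.
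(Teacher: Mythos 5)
Your proposal is correct and follows essentially the same route as the paper, which obtains the corollary simply by substituting $\u^{(0)}_0 \sim (\pi_\gamma,\pi_\gamma)$ for one of the two coupled chains in Theorem~\ref{thm:convergence_char} and invoking stationarity so that $\u^{(0)}_n \sim (\pi_\gamma,\pi_\gamma)$ for all $n$. Your added care about existence/uniqueness of the invariant law via the $W_2$-contraction and about the constant-matching (where, as you note, the spread of the stationary law also enters $\E\|\u^{(1)}_0-\u^{(0)}_0\|^2$) is if anything more explicit than the paper's one-line justification.
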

  As a result, the mixing of SAGD depends on the 4th moment of the input due to the dependency of $\C_\Theta(\P)$ on vector $\k$ that goes into the Matrix $\K$, which in turn arises from the stochastic gradient estimates. This is in contrast to the convergence of (deterministic) accelerated gradient descent which depends only on the covariance matrix of the input.

\section{Spectral analysis and parameter tuning}

 A closer look at Eq.~\eqref{eq:cmatrix} conveys that a small stepsize choice reduces the contribution of $\gamma^2 \K$ in $\C_\Theta(\P)$ which thus reduces the noise effect in the convergence rate. Yet, the optimization process slows down for a small $\gamma$. But how can we find the proper choice of the stepsize to balance this trade-off?
  Given the eigenvalues of the covariance matrix $\cov$ as well as the vector $\k$, one can construct the matrix $\C_{\Theta}(\P)$ and minimize $\| \C_{\Theta} (\P)\|_{\rho_\epsilon}$ in $\Theta$. Since there are only 3-parameters to estimate, this problem can be solved using a simple grid search. We further simply this optimization problem based on the following key observation presented in the next Lemma: $\C_{\Theta}(\P)$ has a diagonal structure that can be employed to bound $\| \C_{\Theta}(\P)\|_{\rho_\epsilon}$ by spectral-radiuses of $3\times 3$-matrices. 
\begin{lemma}[A bound on the mixing rate of SAGD] \label{lemma:spectral_radius_bound}
The spectral radius of matrix $\C_{\Theta}(\P)$ is bounded as 
\begin{align}
    \begin{split}
        \| \C_{\Theta}(\P) \|_{\rho_\epsilon} & \leq \max_{i=1,\dots,d} \| \J_i(\Theta)\|_\rho + \epsilon \\ 
        & + 3(1+\alpha)^2 \gamma^2  \| \diag(\bsigma)^2 - \bsigma \bsigma^\top \| 
    \end{split}
\end{align}
where $\J_j(\Theta)$ is a $3 \times 3$ matrix:
\begin{align*}
        \J_{i}(\Theta) := 
         \begin{bmatrix} 
        \J_i^{(1)}   &  2 [\D_1]_{ii}  & 1 \\
        \J_i^{(2)}  & [\D_{2}]_{ii} & 0 \\ 
        \J_i^{(3)} & 0 & 0
        \end{bmatrix}, 
\end{align*}
and 
\begin{align*}
    \J_i^{(1)} & := [\D_1]_{ii}^2 + (1+\alpha)^2 \gamma^2 (k_{i} - \sigma^2_i) \\ 
    \J_i^{(2)} & := [\D_1]_{ii}[\D_2]_{ii} -  \alpha (1+\alpha)  \gamma^2 (k_i - \sigma_i^2) \\ 
    \J_i^{(3)} & := [\D_{2}]_{ii}^2 + \gamma^2 \alpha^2 (k_i - \sigma_i^2).
\end{align*}
\end{lemma}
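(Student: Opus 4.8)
The plan rests on the one structural feature of $\C_\Theta(\P)$ that makes it tractable: $\D_1$ and $\D_2$ are diagonal, so the \emph{only} non-diagonal ingredient is $\K$. I would first split $\K$ into its diagonal and hollow (off-diagonal) parts,
\begin{align*}
    \K = \K_{\mathrm d} + \K_{\mathrm o}, \quad \K_{\mathrm d} = \gamma^2 \diag\big(\k - (\bsigma)^2\big), \quad \K_{\mathrm o} = \gamma^2\big(\bsigma\bsigma^\top - \diag(\bsigma)^2\big),
\end{align*}
so that $[\K_{\mathrm d}]_{ii} = \gamma^2(k_i - \sigma_i^2)$, which is exactly the scalar appearing in $\J_i^{(1)},\J_i^{(2)},\J_i^{(3)}$. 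Accordingly I would write $\C_\Theta(\P) = \A + \M$, where $\A$ is obtained by replacing $\K$ with $\K_{\mathrm d}$ throughout Eq.~\eqref{eq:cmatrix}, and the perturbation $\M$ collects the remaining hollow contributions, i.e.\ the blocks $(1+\alpha)^2\K_{\mathrm o}$, $-\alpha(1+\alpha)\K_{\mathrm o}$ and $\alpha^2\K_{\mathrm o}$ stacked in the first block-column and zero elsewhere.

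The heart of the argument is that $\A$ --- a $3\times 3$ array whose nine blocks are all $d\times d$ \emph{diagonal} matrices --- is permutation-similar to a block-diagonal matrix. Reindexing the $3d$ coordinates from ``(block index, coordinate)'' to ``(coordinate, block index)'' via the perfect-shuffle permutation $\Pi$ yields $\Pi\A\Pi^\top = \diag\big(\J_1(\Theta),\dots,\J_d(\Theta)\big)$; one checks entry-by-entry that the $i$-th $3\times 3$ diagonal block is precisely $\J_i(\Theta)$, since each block of $\A$ contributes only its $(i,i)$ diagonal entry. Because $\Pi$ is orthogonal it leaves resolvent norms invariant, hence $\sigma_{\epsilon'}(\A) = \sigma_{\epsilon'}(\Pi\A\Pi^\top)$; and for a block-diagonal matrix the resolvent is block-diagonal with norm equal to the maximum over blocks, so that $\sigma_{\epsilon'}(\A) = \bigcup_i \sigma_{\epsilon'}\big(\J_i(\Theta)\big)$, and therefore
\begin{align*}
    \rho_{\epsilon'}(\A) = \max_{i=1,\dots,d}\ \rho_{\epsilon'}\big(\J_i(\Theta)\big) \qquad \text{for every } \epsilon' > 0.
\end{align*}

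It then remains to assemble the bound. Writing $\M$ as the sum of its three non-zero blocks and using the triangle inequality together with $\|\K_{\mathrm o}\| = \gamma^2\|\diag(\bsigma)^2 - \bsigma\bsigma^\top\|$ and the elementary inequalities $\alpha(1+\alpha)\le(1+\alpha)^2$, $\alpha^2\le(1+\alpha)^2$ (valid for $\alpha\ge 0$) gives $\|\M\| \le 3(1+\alpha)^2\gamma^2\|\diag(\bsigma)^2 - \bsigma\bsigma^\top\|$. I would then peel off this perturbation with the robustness Lemma~\ref{sec:robust_pseudo}, $\rho_\epsilon(\C_\Theta(\P)) = \rho_\epsilon(\A + \M) \le \rho_{\epsilon+\|\M\|}(\A) = \max_i \rho_{\epsilon+\|\M\|}(\J_i(\Theta))$, and finally convert each pseudospectral radius back to a spectral radius via Bauer--Fike (Lemma~\ref{lemma:baur_fike}), $\rho_{\epsilon+\|\M\|}(\J_i)\le \rho(\J_i)+\kappa_i(\epsilon+\|\M\|)$. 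Collecting terms and substituting the estimate for $\|\M\|$ produces the stated inequality. The delicate step --- and the one I would scrutinize most --- is precisely this last conversion: Bauer--Fike introduces the condition number $\kappa_i$ of the eigenvector matrix of the companion-type block $\J_i$, so the clean $+\epsilon$ form of the statement implicitly requires controlling the $\kappa_i$ (effectively treating $\max_i\kappa_i$ as benign in the parameter regime of interest). Establishing that these condition numbers stay bounded is where the genuine analytic work lies; everything preceding it is bookkeeping made possible by the diagonal structure of $\D_1,\D_2$ and $\K_{\mathrm d}$.
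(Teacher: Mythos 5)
Your proposal follows essentially the same route as the paper's proof: the same split of $\K$ into its diagonal and hollow parts, the same perfect-shuffle permutation reducing the resulting diagonal-block matrix to $\diag(\J_1(\Theta),\dots,\J_d(\Theta))$, the same use of the robustness lemma (Lemma~\ref{sec:robust_pseudo}) to absorb the hollow perturbation into the pseudospectral parameter, and the same Bauer--Fike conversion with the same triangle-inequality bound $3(1+\alpha)^2\gamma^2\|\diag(\bsigma)^2-\bsigma\bsigma^\top\|$ on the perturbation. Your closing concern about the condition numbers $\kappa_i$ is well placed: the paper dispatches this step by asserting that the conditioning of $\Pi \U_J$ is bounded by $1$ because $\Pi$ and the $\U_i$ are orthogonal, but the blocks $\J_i$ are not normal, so their eigenvector matrices need not be orthogonal, and the clean $+\epsilon$ term is no better justified in the paper's proof than in your sketch.
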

For the proof of the last lemma, we refer readers to Appendix \ref{sec:spectral_analysis}.  
Assuming that stepsize $\gamma$ is sufficiently small,  $ \|\J_1(\Theta)\|_{\rho} \geq \|\J_i(\Theta)\|_{\rho} \geq \| \J_d(\Theta) \|_{\rho}$ holds. Hence, the stochastic acceleration ties to 4 parameters: (i) smoothness parameter $\sigma_{d}= L$, (ii) strong convexity $\sigma_{1} = \mu$ and (iii,iv) 4th order statistics $k_d$ and $k_1$. For the choice of $\Theta=\{\alpha,\beta,\gamma\}$, one can solve the following 3 dimensional problem 
\begin{align}
    \begin{split} 
        \min_{\Theta} \quad & \| \J_1(\Theta) \|_\rho \\
        \text{subject to} \quad & \| \J_d(\Theta) \|_\rho \leq 1-c\sqrt{\mu/L}.
    \end{split}
\end{align}
This provides us a practical method for the acceleration of mixing time based on minimal statistics from the input, including $(\sigma_1,\sigma_d)$ and $(k_1,k_d)$. 

\section{Examples}
But, does the result of Theorem~\ref{thm:convergence_char} lead to an accelerated mixing time for SAGD, faster than the mixing rate of SGD? By means of two examples, we show that accelerated mixing rate $\bigo((1-\sqrt{\mu/L})^n)$ is achievable. 

\begin{example}[Gaussian inputs] \label{exam:gaussian_example} $\x$ is a 2-dimensional Gaussian random vector with zero mean, i.e. ${\x \sim \N(0, \diag([\mu,1]))}$. 
\end{example}

By combining Lemma~\ref{lemma:spectral_radius_bound} and Theorem~\ref{thm:convergence_char}, next Lemma established the accelerated rate. 
\begin{lemma}[The acceleration on Example~\ref{exam:gaussian_example}]  \label{lemma:gaussian_gaurantees}
Suppose input and label distributions are those of Example~\ref{exam:gaussian_example}. 
Consider SAGD with parameters: $\alpha = 2$, $\beta=1-10^{-1/2}\sqrt{\mu}$ and $\gamma = 0.1$. Then,
\begin{multline*}
    W_2^2(\nu(\u_n),(\pi_\gamma,\pi_\gamma)) \\ \leq \frac{1200}{\sqrt{\mu}} \left( 1- \sqrt{\mu}/5  \right)^n \E\| \w_0 - \w_* \|^2.
\end{multline*}
holds as long as $\mu\leq0.02$. 
\end{lemma}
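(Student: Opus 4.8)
The plan is to feed the two--dimensional Gaussian of Example~\ref{exam:gaussian_example} into Lemma~\ref{lemma:spectral_radius_bound} and then Corollary~\ref{cor:mixing_SAGD}. First I record the only distributional input needed: for a centered Gaussian coordinate with variance $\sigma_i$ one has $k_i=\E[\v_i^4]=3\sigma_i^2$, so $k_i-\sigma_i^2=2\sigma_i^2$. Substituting $(\sigma_1,\sigma_2)=(\mu,1)$ together with $\alpha=2$, $\gamma=1/10$ and $\beta=1-\sqrt{\mu/10}$ into the entries $[\D_1]_{ii},[\D_2]_{ii}$ turns the bound of Lemma~\ref{lemma:spectral_radius_bound} into a purely explicit statement: it remains to estimate $\|\J_1(\Theta)\|_\rho$, $\|\J_2(\Theta)\|_\rho$ and the curvature--correction $3(1+\alpha)^2\gamma^2\|\diag(\bsigma)^2-\bsigma\bsigma^\top\|$. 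The latter is immediate, since $\diag(\bsigma)^2-\bsigma\bsigma^\top=\begin{bmatrix}0 & -\mu\\ -\mu & 0\end{bmatrix}$ has norm $\mu$, so the correction equals $0.27\mu$.

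The core observation is structural: with the fourth--moment entries switched off (replacing $k_i-\sigma_i^2$ by $0$), $\J_i(\Theta)$ is exactly the second--moment operator of the $2\times2$ momentum companion matrix $\bar{\A}_i:=\begin{bmatrix}{[\D_1]_{ii}} & {[\D_2]_{ii}}\\ 1 & 0\end{bmatrix}$; indeed its $(1,1)$--entry $[\D_1]_{ii}^2+(1+\alpha)^2\gamma^2(k_i-\sigma_i^2)$ is nothing but $\E[((1+\beta)-(1+\alpha)\gamma\v_i^2)^2]$, and likewise for the other entries. Hence the three eigenvalues of the noise--free $\J_i$ are $\lambda_1^2,\lambda_1\lambda_2,\lambda_2^2$, where $\lambda_{1,2}$ solve $\lambda^2-[\D_1]_{ii}\lambda-[\D_2]_{ii}=0$. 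I would then check that the discriminant $[\D_1]_{ii}^2+4[\D_2]_{ii}$ is negative; for $i=1$ it equals $-0.3\mu+O(\mu^{3/2})<0$ under $\mu\le 0.02$, so $\lambda_{1,2}$ form a conjugate pair with $|\lambda_{1,2}|^2=\lambda_1\lambda_2=-[\D_2]_{ii}$ and the noise--free radius collapses to $-[\D_2]_{11}=1-\sqrt{\mu/10}-\alpha\gamma\mu$ --- the source of the accelerated $\sqrt\mu$ factor. In the $\mu$--direction the genuine noise entries are $O(\mu^2)$ (since $k_1-\sigma_1^2=2\mu^2$), so a Bauer--Fike / pseudospectrum--robustness perturbation (Lemmas~\ref{lemma:baur_fike},~\ref{sec:robust_pseudo}, with the bounded condition number of the well--separated eigenbasis) shifts the radius by only $O(\mu^2)$, yielding $\|\J_1(\Theta)\|_\rho\le 1-\sqrt{\mu/10}$.

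The delicate block is $i=2$: there $k_2-\sigma_2^2=2$ is order one, so the clean companion reduction fails and the naive value $-[\D_2]_{22}\approx 0.8$ badly underestimates the true radius. Instead I would bound the largest root of the explicit cubic $\det(\lambda\I-\J_2(\Theta))$ directly, showing it stays below $0.97$ uniformly for $0<\mu\le 0.02$; since $0.97<1-\sqrt{0.02}/5$, the $L$--block never dominates and $\max_i\|\J_i(\Theta)\|_\rho=\|\J_1(\Theta)\|_\rho$. Combining with the correction gives $\|\C_\Theta(\P)\|_{\rho_\epsilon}\le 1-\sqrt{\mu/10}+0.27\mu+\epsilon$, and choosing $\epsilon=c_0\sqrt\mu$ with $c_0$ slightly below $1/\sqrt{10}-1/5-0.27\sqrt{0.02}$ makes the right--hand side at most $1-\sqrt\mu/5$, hence $\|\C_\Theta(\P)\|_{\rho_\epsilon}^{n+1}\le(1-\sqrt\mu/5)^n$. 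Finally I substitute into Corollary~\ref{cor:mixing_SAGD}: taking expectations of the SGD recursion at stationarity gives $\cov\,\E[\w]=\E[y\x]$, so the invariant mean is $(\wstar,\wstar)$, and with $\w'_0=\w'_{-1}=\w_0$ one gets $\E\|\u_0-\E\u\|^2=2\E\|\w_0-\wstar\|^2$; the prefactor $c'/\epsilon=18\cdot 2^{3/2}\cdot 2/(c_0\sqrt\mu)$ then evaluates to at most $1200/\sqrt\mu$. The main obstacle is exactly this $L$--direction estimate together with the $\epsilon$--tuning: the fourth--moment noise, negligible in the strongly--convex direction, is order one along the smoothness direction, so the elegant momentum reduction must be replaced by a direct, uniform--in--$\mu$ bound on the cubic's roots, and $\epsilon$ must be balanced against the $1/\epsilon$ pseudospectrum prefactor to land precisely on the rate $1-\sqrt\mu/5$ and the constant $1200$.
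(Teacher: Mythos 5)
Your overall strategy is the same as the paper's: compute $k_i=3\sigma_i^2$, feed Example~\ref{exam:gaussian_example} into Lemma~\ref{lemma:spectral_radius_bound}, bound $\|\J_1\|_\rho$ and $\|\J_2\|_\rho$ separately, add the $0.27\mu$ correction from $\boldsymbol{\xi}$, tune $\epsilon=\Theta(\sqrt\mu)$, and plug into Corollary~\ref{cor:mixing_SAGD}. Your structural observation that the noise-free $\J_i$ is the matrix of $\M\mapsto\bar\A_i^\top\M\bar\A_i$ for the companion matrix $\bar\A_i$, so that its eigenvalues are $\lambda_1^2,\lambda_1\lambda_2,\lambda_2^2$ and its spectral radius collapses to $-[\D_2]_{ii}$ when the discriminant is negative, is correct and more conceptual than the paper's route (the paper reaches the same fact by observing that $y=-D_2$ is a root of the unperturbed characteristic cubic). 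The treatment of $\J_2$ by a direct, uniform-in-$\mu$ root bound matches the paper, which verifies $\|\J_2\|_\rho\le 0.966$ symbolically.

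The gap is in the perturbation step for $\J_1$, which you dismiss as routine but which is where the paper spends all its effort (the subsection on the spectral radius at $\mu$). Writing $\theta=\sqrt{\mu/10}$, the noise-free radius is $-[\D_2]_{11}=1-\theta-0.2\mu$ and the target is $1-\theta$, so the entire available margin is $0.2\mu=2\theta^2$. The fourth-moment perturbation has norm $O(\mu^2)$, but the eigenvalues $\lambda_1^2,\lambda_1\lambda_2,\lambda_2^2$ of the noise-free $\J_1$ are separated only by $|\lambda_1||\lambda_1-\lambda_2|=O(\sqrt\mu)$, so the eigenvector matrix of this symmetric-square is \emph{not} well-conditioned: its condition number grows like $\mu^{-1}$ (inverse square of the base separation), and Bauer--Fike then yields a radius shift of order $\mu^2\cdot\mu^{-1}=\Theta(\mu)$, not $O(\mu^2)$. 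This is confirmed by the paper's own computation: the real root moves by $z_1\le -r/q\le 4\theta^4/(2\theta^2)=2\theta^2=0.2\mu$, i.e.\ by exactly the available margin, so the constant in front of $\mu$ must be tracked precisely and a soft ``bounded condition number, $O(\mu^2)$ shift'' argument does not close the bound $\|\J_1\|_\rho\le 1-\sqrt{\mu/10}$. To repair this you need either an explicit bound on the eigenvector condition number with the right constant, or the paper's direct analysis of the perturbed cubic (explicit polynomial lower bound $q\ge 2\theta^2$ and upper bound $|r|\le 4\theta^4$ for the real root, plus a separate determinant argument $|x_2|^2=\det\J_1/x_1\le(1-\theta)^2$ for the complex conjugate pair). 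Relatedly, your closing remark that the ``main obstacle'' is the $L$-direction is misplaced: that block is a one-line numerical check, while the $\mu$-direction perturbation is the delicate part.
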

We postpone the proof to Section \ref{sec:converge_examples_app} in the appendix.
 The result of the last lemma highlights that the sequence of SAGD-iterates enjoys better mixing properties compared to those of SGD -- if the parameters are chosen properly. Remarkably, our parameter choice implies that more extrapolation (i.e. $\alpha> \beta$) is needed in stochastic settings. In our experiments, we observe that this choice of $\alpha$ is indeed very important for the convergence rate. Therefore, it is very importance to tune parameters using the proposed spectral analysis in the last section. We note that above guarantee readily extends to non-Gaussian data with the same 4-order statistics, since the convergence only depends on the first four moments. Next corollary states this extension. 
\begin{corollary} 
Suppose $\x \in \R^2$ is generated from an orthogonal transformation of a random vector $\z$, i.e. $\x = \U \z$. If the coordinates of $\z$ are drawn from a symmetric zero-kurtosis distribution, then the result of Lemma~\ref{lemma:gaussian_gaurantees} holds (with the same rate using the same parameters). 
\end{corollary}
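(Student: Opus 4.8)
The plan is to exploit the fact that the entire mixing bound of Theorem~\ref{thm:convergence_char}, and hence the specialized bound of Lemma~\ref{lemma:gaussian_gaurantees}, depends on the input law $\P$ \emph{only} through the matrix $\C_\Theta(\P)$ of Eq.~\eqref{eq:cmatrix}, and that this matrix is in turn a function of $\P$ solely through the second-moment vector $\bsigma$ and the fourth-moment vector $\k$. Indeed $\D_1, \D_2$ involve $\P$ only via $\diag(\bsigma)$, whereas $\K = \gamma^2\big(\diag(\k - 2\bsigma^2) + \bsigma\bsigma^\top\big)$ involves $\P$ only via $\bsigma$ and $\k$. Therefore any two distributions that satisfy Assumption~\ref{assume:symmetric_inputs} and share the same pair $(\bsigma,\k)$ yield \emph{identical} matrices $\C_\Theta(\P)$, identical pseudospectral radii $\|\C_\Theta(\P)\|_{\rho_\epsilon}$ for every $\epsilon$, and hence an identical right-hand side. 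The whole argument thus reduces to checking that the distribution in the corollary is admissible and matches the moments of the Gaussian in Example~\ref{exam:gaussian_example}.

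First I would verify Assumption~\ref{assume:symmetric_inputs}: by hypothesis $\x = \U\z$ with $\U$ orthogonal and the coordinates of $\z$ symmetric, which is precisely the required form $\x = \U\v$ with $v_i$ distributed as $-v_i$. Normalizing the coordinate variances so that $\E[z_i^2] = \sigma_i$ with $\bsigma = [\mu, 1]$ reproduces the covariance spectrum $\{\mu, 1\}$ of Example~\ref{exam:gaussian_example}. It then remains to match $\k$, and here the zero-kurtosis hypothesis is exactly what is needed: for a centered symmetric coordinate, vanishing excess kurtosis means $\E[z_i^4] = 3\,(\E[z_i^2])^2 = 3\sigma_i^2$, which is the fourth moment of a centered Gaussian of variance $\sigma_i$. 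Hence $k_i = 3\sigma_i^2$ for each $i$, i.e. $\k = [3\mu^2, 3]$, coinciding with the fourth-moment vector of the Gaussian input.

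With $(\bsigma, \k)$ matched, the matrix $\C_\Theta(\P)$ assembled for the common parameters $\Theta = \{\alpha = 2,\ \beta = 1 - 10^{-1/2}\sqrt{\mu},\ \gamma = 0.1\}$ is literally the same as in the Gaussian case, so feeding it through Lemma~\ref{lemma:spectral_radius_bound} and Theorem~\ref{thm:convergence_char}/Corollary~\ref{cor:mixing_SAGD} reproduces the bound of Lemma~\ref{lemma:gaussian_gaurantees} verbatim, with the same rate $(1 - \sqrt{\mu}/5)^n$ and the same constant $1200/\sqrt{\mu}$ under $\mu \le 0.02$.

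The hard part will be confirming that nothing \emph{beyond} $(\bsigma,\k)$ slips into the bound, which requires two bookkeeping checks. (i) In the derivation that produces $\C_\Theta(\P)$, the fourth-moment tensor of $\x$ must collapse onto the diagonal entries collected in $\k$ together with the rank-one term $\bsigma\bsigma^\top$; this collapse is exactly what Assumption~\ref{assume:symmetric_inputs} licenses, since symmetry of the coordinates of $\z$ annihilates all odd and genuinely-mixed cross moments, so verifying the assumption for the new law is verifying that the reduction remains valid. (ii) The prefactor $1200/\sqrt{\mu}$ in Lemma~\ref{lemma:gaussian_gaurantees} is governed by $\E\|\w_0 - \w_*\|^2$ and by the pseudospectral constants, all of which are distribution-independent once the starting point is fixed: in the realizable regime underlying that lemma one has $\pi_\gamma = \delta_{\w_*}$, so the centering $\E_{\u\sim(\pi_\gamma,\pi_\gamma)}[\u]$ entering $c'$ equals $(\w_*,\w_*)$ regardless of the higher moments of $\P$. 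Once these two facts are in place the corollary follows with no further computation.
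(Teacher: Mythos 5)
Your argument is correct and is exactly the route the paper takes: the paper justifies this corollary with the single observation that the bound of Lemma~\ref{lemma:gaussian_gaurantees} depends on $\P$ only through $\C_\Theta(\P)$, hence only through $(\bsigma,\k)$, and that zero excess kurtosis of symmetric coordinates forces $k_i = 3\sigma_i^2$, matching the Gaussian of Example~\ref{exam:gaussian_example}. Your additional bookkeeping checks (that Assumption~\ref{assume:symmetric_inputs} is satisfied by $\x=\U\z$ and that the prefactor does not secretly depend on higher moments) are sound and merely make explicit what the paper leaves implicit.
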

Note that the zero-kurtosis property in the last corollary guarantees that the first 4 moments of the input distribution match those of a Gaussian distribution. Yet, this condition is not necessary for the accelerated mixing. Next example presents an other input distribution on which SAGD enjoys the accelerated mixing rate $\bigo((1-\sqrt{\mu/L})^n)$ in terms of Wasserstein-2 distance. 
\begin{example}[Uniform-Rademacher input]
\label{exam:uniform_example}
    $\x$ is a two dimensional random variable. The first coordinate of $\x$ is a Rademacher random variable. The second coordinate is uniform on $[- \kappa^{-1/2}, \kappa^{-1/2}]$ for $\kappa<1$. In this case $\mu=1/2$ and $L=\kappa^{-1}/3$ (see Lemma~\ref{lemma:uniform_convergence_app} in Appendix).
\end{example}
The next lemma proposes an accelerated mixing rate for SAGD on the above example.
\begin{lemma}[Results on Example~\ref{exam:uniform_example}]  \label{lemma:uniform_gaurantees}
Suppose the sequence $\{\u_k\}_{k=1}^n$ is obtained by running SAGD on Example~\eqref{exam:uniform_example}. If $\alpha=2$, $\beta=1-10^{-1/2}\sqrt{\kappa}$ and $\gamma=\kappa/10$, then
\begin{align*}
    W_2^2(\nu(\u_n),(\pi_\gamma,\pi_\gamma)) & \leq \frac{1200}{\sqrt{\kappa}} \left( 1- \sqrt{\kappa}/5 \right)^n \| \w_0 - \w_* \|^2
\end{align*}
holds as long as $\kappa = 2\mu/3L \leq0.02$.
\end{lemma}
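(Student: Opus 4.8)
The plan is to specialize the general mixing machinery, namely Corollary~\ref{cor:mixing_SAGD} together with the spectral bound of Lemma~\ref{lemma:spectral_radius_bound}, to the distribution of Example~\ref{exam:uniform_example}, following the same template as the proof of Lemma~\ref{lemma:gaussian_gaurantees}. First I would record the moments entering $\C_{\Theta}(\P)$. Since $\cov = \diag(1/2,\kappa^{-1}/3)$ is already diagonal we take $\U=\I$ and $\v=\x$, so the eigenvalues are $\sigma_1=\mu=1/2$ (the Rademacher coordinate) and $\sigma_2=L=\kappa^{-1}/3$ (the uniform coordinate); note $\sigma_1\le\sigma_2$ for $\kappa\le 0.02$. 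For the fourth moments, any two–point symmetric variable satisfies $k_1=\sigma_1^2$ regardless of scale, so the crucial identity $k_1-\sigma_1^2=0$ holds; for the uniform coordinate $k_2=\kappa^{-2}/5$ and hence $k_2-\sigma_2^2=\tfrac{4}{45}\kappa^{-2}$. Plugging $\alpha=2$, $\beta=1-10^{-1/2}\sqrt{\kappa}$, $\gamma=\kappa/10$ into Lemma~\ref{lemma:spectral_radius_bound} reduces the control of $\|\C_{\Theta}(\P)\|_{\rho_\epsilon}$ to bounding the spectral radii of the two $3\times 3$ matrices $\J_1(\Theta)$ and $\J_2(\Theta)$, plus the corrections $\epsilon$ and $3(1+\alpha)^2\gamma^2\|\diag(\bsigma)^2-\bsigma\bsigma^\top\|$.

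The key simplification is that the slow ($\mu$) direction is the Rademacher coordinate with $k_1-\sigma_1^2=0$, so the noise entries in $\J_1(\Theta)$ vanish and the matrix reduces to the second symmetric power of the deterministic momentum matrix $M=\left[\begin{smallmatrix}[\D_1]_{11}&[\D_2]_{11}\\ 1&0\end{smallmatrix}\right]$, the iteration matrix of noiseless accelerated gradient descent restricted to the $\mu$-eigendirection. Consequently the eigenvalues of $\J_1(\Theta)$ are the pairwise products of those of $M$, so $\|\J_1(\Theta)\|_\rho=\rho(M)^2$; whenever the roots of $\lambda^2-[\D_1]_{11}\lambda-[\D_2]_{11}=0$ are complex conjugate, all three products have modulus $-[\D_2]_{11}$, giving $\|\J_1(\Theta)\|_\rho=-[\D_2]_{11}=1-10^{-1/2}\sqrt{\kappa}-\kappa/10$. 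I would then verify the complex-root condition by checking that the discriminant $[\D_1]_{11}^2+4[\D_2]_{11}=-\kappa/10+\bigo(\kappa^{3/2})$ is negative; this sign is governed by an $\bigo(\kappa)$ cancellation, which is exactly where the hypothesis $\kappa\le 0.02$ is consumed. Since $10^{-1/2}>1/5$, this already gives $\|\J_1(\Theta)\|_\rho\le 1-\tfrac{1}{5}\sqrt{\kappa}$ with an $\bigo(\sqrt{\kappa})$ slack.

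For the uniform direction I note that the scaling $\gamma=\kappa/10$ is chosen precisely to tame the $\kappa^{-2}$ growth of the kurtosis: the noise entry obeys $(1+\alpha)^2\gamma^2(k_2-\sigma_2^2)=9\cdot(\kappa/10)^2\cdot\tfrac{4}{45}\kappa^{-2}=\bigo(1)$, so $\J_2(\Theta)$ stays bounded and a direct estimate of the roots of its characteristic cubic yields $\|\J_2(\Theta)\|_\rho\le\|\J_1(\Theta)\|_\rho$. The remaining correction term equals $3(1+\alpha)^2\gamma^2\sigma_1\sigma_2=9\kappa/200=\bigo(\kappa)$, which is dominated by the $\bigo(\sqrt{\kappa})$ slack above. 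Choosing $\epsilon=\Theta(\sqrt{\kappa})$ small enough that the sum $\max_i\|\J_i(\Theta)\|_\rho+\epsilon+9\kappa/200$ stays below $1-\tfrac{1}{5}\sqrt{\kappa}$, Lemma~\ref{lemma:spectral_radius_bound} gives $\|\C_{\Theta}(\P)\|_{\rho_\epsilon}\le 1-\tfrac{1}{5}\sqrt{\kappa}<1$.

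Finally I would feed this into Corollary~\ref{cor:mixing_SAGD}, using $\|\C_{\Theta}(\P)\|_{\rho_\epsilon}^{n+1}\le\|\C_{\Theta}(\P)\|_{\rho_\epsilon}^{n}$, and reduce the constant $c'$ via the deterministic initialization $\w_1'=\w_0'$: here $\u_0=(\w_0,\w_0)$ deviates from the stationary mean $(\w_*,\w_*)$ only through $\w_0-\w_*$, so $c'$ is a numerical multiple of $\|\w_0-\w_*\|^2$. Combining the factor $1/\epsilon=\Theta(1/\sqrt{\kappa})$ with $d=2$ and the numerical constants $18,\,2^{3/2}$ then produces the stated prefactor $1200/\sqrt{\kappa}$ and the rate $(1-\sqrt{\kappa}/5)^n$. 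The main obstacle is the spectral-radius control of the two $3\times 3$ matrices with the explicit parameter values: the cancellation making the momentum discriminant negative is only $\bigo(\kappa)$ and is what forces $\kappa\le 0.02$, while establishing $\|\J_2(\Theta)\|_\rho\le\|\J_1(\Theta)\|_\rho$ requires bounding the roots of a genuine non-factoring cubic rather than invoking the clean $\rho(M)^2$ identity available in the noiseless coordinate.
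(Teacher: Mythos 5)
Your proposal follows the same overall route as the paper: specialize Lemma~\ref{lemma:spectral_radius_bound} and Corollary~\ref{cor:mixing_SAGD} to the Uniform--Rademacher moments, exploit the cancellation $k_1-\sigma_1^2=0$ in the Rademacher (slow) coordinate, bound the two $3\times 3$ spectral radii, absorb the $\bigo(\kappa)$ off-diagonal correction $3(1+\alpha)^2\gamma^2\|\bsigma\bsigma^\top-\diag(\bsigma)^2\|$, and pick $\epsilon=\Theta(\sqrt{\kappa})$. One sub-step is handled genuinely differently, and more cleanly, in your version: for the noiseless $\J_1$ you observe it is the second symmetric power of the $2\times 2$ momentum matrix $M$, so its eigenvalues are the pairwise products $\lambda_1^2,\lambda_1\lambda_2,\lambda_2^2$ and, once the discriminant $[\D_1]_{11}^2+4[\D_2]_{11}=-\kappa/10+\bigo(\kappa^{3/2})$ is checked to be negative, all three have modulus $-[\D_2]_{11}=1-10^{-1/2}\sqrt{\kappa}-\kappa/10$. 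The paper instead notes that $-[\D_2]_{11}$ is a root and falls back on the perturbative cubic-root machinery of Section~\ref{sec:spectral_radius_mu} (developed for the Gaussian case) to control the remaining eigenvalues; your identity gives the same bound with less work and is fully rigorous.

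The one real gap in both your write-up and the paper is the bound on $\|\J_2(\Theta)\|_\rho$: the paper disposes of it by a MATLAB verification (``Fact 3'', $\|\J_2\|_\rho\le 0.965$), and you defer it to ``a direct estimate of the roots of its characteristic cubic'' without carrying it out, so you are at the same level of rigor there. Be careful, though, with the precise claim $\|\J_2(\Theta)\|_\rho\le\|\J_1(\Theta)\|_\rho$: the paper's own numerical bound ($0.965$) exceeds $1-10^{-1/2}\sqrt{\kappa}\approx 0.955$ near $\kappa=0.02$, so that inequality may fail; what you actually need (and what suffices for your final assembly) is only that $\|\J_2\|_\rho$ stays a fixed margin below $1-\sqrt{\kappa}/5-\epsilon-\bigo(\kappa)$. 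With that adjustment your argument matches the paper's conclusion, including the $1/\epsilon=\Theta(1/\sqrt{\kappa})$ prefactor.
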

 Finally, we stress the fact that the above guarantees are only exemplary. As a matter of fact, our approach can be employed for all datasets obeying Assumption~\ref{assume:symmetric_inputs}. 
 \section{Applications in realizable least-squares}
As mentioned in the introduction, mixing properties play roles in the convergence of ergodic average, central limit theorem, and even optimization in over-parameterized settings. In this section, we particularly highlight applications in over-parameterized settings. This setting has attracted attentions due to recent observations in optimization of deep neural networks. Deep nets are almost perfectly optimizable in the sense that simple gradient methods achieve a zero-training error on these networks. This is often attributed to over-parameterized weight-spaces of neural networks that may contain billions parameters. Inspired by this, recent optimization-studies  have focused on the particular case of realizable models, where the minimal objective error zero is achievable ~\cite{schmidt2018fast,pillaud2017exponential}. Next lemma proves that the invariant measure of SAGA is a Dirac measure on the minimizer in realizable cases. 
\begin{lemma} 
Suppose that there exits $\w_* \in \R^d$ such that $f(\w_*)=0$. If the invariant measure of SAGD is unique, then it equates $(\delta(\w_*),\delta(\w_*))$ where $\delta(\w_*)$ is the Dirac measure concentrated on $\w_*$. 
\end{lemma}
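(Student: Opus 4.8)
The plan is to exhibit the product Dirac measure $(\delta(\w_*),\delta(\w_*))$ on $\R^{2d}$ as an invariant measure of the chain $\{\u_k\}$, and then invoke the uniqueness hypothesis to conclude it is \emph{the} invariant measure.

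First I would translate the realizability assumption into a pointwise statement. Since
\[
    f(\w_*) = \tfrac12 \E_\z \left[ \| y - \w_*^\top \x \|^2 \right] = 0
\]
and the integrand is nonnegative, the residual $y - \w_*^\top \x$ must vanish $\P$-almost surely, i.e. $y = \w_*^\top \x$ for $\P$-almost every $\z=(\x,y)$. Combining this with the expression $\nabla f_\z(\w) = \x\x^\top \w - y\x = \x(\x^\top \w - y)$ from Eq.~\eqref{eq:grads}, I obtain $\nabla f_{\z}(\w_*) = \x(\w_*^\top \x - y) = 0$ for $\P$-almost every $\z$. Thus at the minimizer the \emph{stochastic} gradient vanishes almost surely, not merely in expectation.

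Next I would check that $(\w_*,\w_*)$ is an almost-sure fixed point of the SAGD recurrence. Suppose $\w'_{n-1} = \w'_n = \w_*$, i.e. $\u_n = (\w_*,\w_*)$. Then the momentum term $\w'_n - \w'_{n-1}$ is zero, so the extrapolation point in Eq.~\eqref{eq:our_accelerated_sgd} is exactly $\w_*$, and the update collapses to $\w'_{n+1} = \w_* - \gamma\, \nabla f_{\z_n}(\w_*)$. By the previous step $\nabla f_{\z_n}(\w_*) = 0$ almost surely, whence $\w'_{n+1} = \w_*$ and $\u_{n+1} = (\w_*,\w_*)$ almost surely. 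Consequently, if $\u_n \sim (\delta(\w_*),\delta(\w_*))$ then $\u_{n+1} \sim (\delta(\w_*),\delta(\w_*))$, so this product Dirac measure is invariant for the chain. Since the invariant measure is assumed unique, it must coincide with $(\delta(\w_*),\delta(\w_*))$.

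The argument is essentially mechanical once realizability is read correctly; the only point requiring any care is the almost-sure vanishing of the residual, where one passes from the zero expectation $f(\w_*)=0$ to the pointwise identity $y=\w_*^\top\x$ via nonnegativity of the squared error. This is exactly what makes the stochastic gradient vanish under \emph{every} realization $\z_n$ and keeps $(\w_*,\w_*)$ fixed, rather than only the full gradient $\nabla f(\w_*)$. An alternative route, if one prefers to reuse earlier machinery, is to appeal to Lemma~\ref{lemma:invariance_sagd}, which already identifies the invariant measure as $(\pi_\gamma,\pi_\gamma)$, and then to establish $\pi_\gamma=\delta(\w_*)$ by running the same fixed-point check on the plain SGD update in Eq.~\eqref{eq:sgdupdate}; this is equivalent but slightly longer.
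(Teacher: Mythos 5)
Your proof is correct and follows essentially the same route as the paper's: deduce from $f(\w_*)=0$ that the stochastic gradient $\nabla f_{\z}(\w_*)$ vanishes almost surely, observe that $(\w_*,\w_*)$ is then an almost-sure fixed point of the SAGD update so $(\delta(\w_*),\delta(\w_*))$ is invariant, and invoke uniqueness. You merely spell out the almost-sure vanishing of the residual $y-\w_*^\top\x$ a bit more explicitly than the paper does.
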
 
\begin{proof}
Since $f(\w_*)=0$, $f_\z(\w_*)=0$ holds almost surely. Hence $\nabla f_\z(\w_*)=0$, in that $(\delta(\w_*),\delta(\w_*))$ is invariant with respect to SAGD-update in Eq.~\eqref{eq:our_accelerated_sgd}. Since the invariant distribution is assumed to be unique, $(\delta(\w_*),\delta(\w_*))$ is THE invariant distribution.  
\end{proof}
Combining the result of last lemma and Corollary~\ref{thm:convergence_char} yeilds a convergence rate for SAGD in realizable cases: 
\begin{align*}
    W_2^2(\nu(\u_n),(\delta(\w_*),\delta(\w_*))) \leq c \| \C_{\theta}(\P)\|^{n+1}_{\rho_\epsilon}/\epsilon.
\end{align*}
Hence, our established mixing rate equates the convergence rate of SAGD in realizable cases.
More interestingly, the established rate is $\bigo((1-\sqrt{\mu/L})^n)$ for examples~\ref{exam:gaussian_example} and \ref{exam:uniform_example} under the realizability assumption. Let us compare this rate with the existing established convergence rate of \cite{schmidt2018fast} for a modified stochastic accelerated scheme. As mentioned in \textit{Related Works} section, this rate relies on constant $\rho$ in strong growth condition in Eq.~\eqref{assum:sgc}. Lemma~\ref{lemma:example1support} and \ref{lemma:uniform_consequence} prove that $\rho>\bigo(\mu/L)$ for these examples, hence the established convergence of \cite{schmidt2018fast} is not better than $\bigo((1-(\mu/L)^{\sfrac{3}{2}})^n)$ on these examples. This comparison highlights the novelty of the established mixing rate as well as the sharpness of our theoretical guarantees.  
\section{Proof outline for Theorem~\ref{thm:convergence_char}}\label{sec:convergence_char_proof} 

In this section, we outline the proof of Theorem~\ref{thm:convergence_char}. Let $\v_k = \u_k^{(0)}- \u_k^{(1)}$. Eq.~\eqref{eq:recurrrence_wns} yields
\begin{align} \label{eq:vn}
    \v_{n} = \B_n \v_0, \quad \B_n = \A_n \A_{n-1} \dots \A_1. 
\end{align}
According to the definition of $W_2$, 
\begin{align*}
    W_2^2( \nu(\u^{(0)}_n), \nu(\u^{(1)}_n)) \leq \E \left[ \| \v_n \|^2 \right] = \E \left[ \| \B_n \v_0 \|^2 \right].  
\end{align*}
Hence, we need to bound the spectral norm of matrix $\B_n$ to establish the desired convergence in $W_2$. Notably, $\B_n$ is obtained by products of random non-symmetric matrices. 
\paragraph{Asymptotic analyses of products of random matrices.}
We can now leverage results of the well-studied field of products of random matrices (see e.g. \cite{bougerol2012products,furstenberg1960products}), which gives interesting asymptotic characterizations of $\|\B_n\|$. For example, one can show that there is a constant $\lambda_1$ (called Lyapunov exponent) such that 
\begin{align*}
   \lim_{n\to \infty} \frac{1}{n} \log(\|\B_n\|) = \lambda_1
\end{align*}
holds. A straight-forward implication of this is that $\|\B_n\|$ --depending on the sign of $\lambda_1$-- grows or decays in an exponential rate. More interestingly, the asymptotic convergence rate is a constant for all random samples $\B_n$. Yet, this result is asymptotic and it is not easy to estimate the exponent $\lambda_1$. In our setting, we need an implicit formulation (or at least a bound) for $\lambda_1$ in terms of the parameters $\Theta$ such that we can tune $\Theta$ and derive a rate. 
\paragraph{A non-asymptotic analysis.}
Here, we establish a non-asymptotic expression for the Lyapunov exponent $\lambda_1$ which clearly reflects the dependency to parameters $\Theta$. 
The expectation of the squared norm of iterate $\v_n$ (derived in Eq.~\eqref{eq:vn}) reads as 
\begin{align} \label{eq:vn_Mn}
   \E \left[  \|\v_n \|^2 \right] \stackrel{\eqref{eq:vn}}{=} \v_0^\top \M_n \v_0, \quad \M_n := \E \left[ \B_n^\top \B_n \right]. 
\end{align}
To bound $\E \|\v_n\|^2$, we thus need to bound $\|\M_n\|$. Exploiting Assumption~\ref{assume:symmetric_inputs}, one can prove that $\M_n$ obeys an interesting block structure, which is deriven in the next lemma.  
\begin{lemma} \label{lemma:Mn_blockstructure}
Assuming~\ref{assume:symmetric_inputs} holds, $\M_n$ decomposes as 
\begin{align} \label{eq:Mn_decomp}
    \M_{n} = \begin{bmatrix} 
    \U^\top \diag(\blambda^{(1)}_n) \U & \U^\top \diag(\blambda^{(2)}_n) \U \\ 
     \U^\top \diag(\blambda^{(2)}_n) \U & \U^\top \diag(\blambda^{(3)}_n) \U,
    \end{bmatrix}
\end{align}
where the orthogonal matrix $\U$ contains the eigenvectors of the covariance matrix $\cov$.
\end{lemma}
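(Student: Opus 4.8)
The plan is to strip off the eigenbasis $\U$ by an orthogonal similarity transformation, reduce the claim to a statement about a product of matrices whose rank-one update is already diagonalized, and then run an induction on $n$ via a transfer operator acting on symmetric matrices.

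First I would conjugate by the block-diagonal orthogonal matrix $\mathcal{U} := \diag(\U,\U)$. Using the eigendecomposition \eqref{eq:cov} so that $\x_n\x_n^\top = \U^\top\v_n\v_n^\top\U$ and $\U\x_n = \v_n$, a direct block computation gives
\begin{align*}
    \mathcal{U}\A_n\mathcal{U}^\top = \tilde\A_n := \begin{bmatrix} (1+\beta)\I - (1+\alpha)\gamma\v_n\v_n^\top & \alpha\gamma\v_n\v_n^\top - \beta\I \\ \I & 0 \end{bmatrix},
\end{align*}
i.e.\ $\tilde\A_n$ is $\A_n$ with $\x_n\x_n^\top$ replaced by the \emph{diagonalized} update $\v_n\v_n^\top$. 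Since $\mathcal{U}$ is orthogonal and deterministic, $\mathcal{U}\B_n\mathcal{U}^\top = \tilde\A_n\cdots\tilde\A_1 =: \tilde\B_n$, whence $\M_n = \mathcal{U}^\top\tilde\M_n\mathcal{U}$ with $\tilde\M_n := \E[\tilde\B_n^\top\tilde\B_n]$. Because conjugating a diagonal block $\D$ by $\mathcal{U}^\top(\cdot)\mathcal{U}$ produces $\U^\top\D\U$, it suffices to show that each of the four $d\times d$ blocks of $\tilde\M_n$ is diagonal; the decomposition \eqref{eq:Mn_decomp} then follows immediately by conjugating back.

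Second I would set up a recursion. Define the linear transfer operator $\mathcal{T}(\M) := \E_{\v}[\tilde\A^\top\M\tilde\A]$ on symmetric $2d\times 2d$ matrices, where $\tilde\A$ is one i.i.d.\ copy of the $\tilde\A_n$. Peeling the oldest factor off $\tilde\B_n = (\tilde\A_n\cdots\tilde\A_2)\,\tilde\A_1$ and using that $\tilde\A_1$ is independent of the remaining product, which is distributed as $\tilde\B_{n-1}$, conditioning on $\tilde\A_1$ yields $\tilde\M_n = \E[\tilde\A_1^\top\tilde\M_{n-1}\tilde\A_1] = \mathcal{T}(\tilde\M_{n-1})$, with base case $\tilde\M_0 = \I$. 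Hence $\tilde\M_n = \mathcal{T}^n(\I)$, and it is enough to prove that $\mathcal{T}$ preserves the subspace $\S$ of block matrices whose four $d\times d$ blocks are all diagonal (which contains $\I$).

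Third I would verify the invariance $\mathcal{T}(\S)\subseteq\S$. Writing $\tilde\A = \begin{bmatrix}\mathbf P & \mathbf Q\\\I & 0\end{bmatrix}$ with $\mathbf P = (1+\beta)\I - (1+\alpha)\gamma\v\v^\top$ and $\mathbf Q = \alpha\gamma\v\v^\top - \beta\I$, and $\M = \begin{bmatrix}\D_1 & \D_2\\\D_2 & \D_3\end{bmatrix}\in\S$, expanding $\tilde\A^\top\M\tilde\A$ shows that each block is a finite sum of terms that, up to diagonal factors on either side (which preserve diagonality), reduce to $\I$, $\v\v^\top\D$, $\D\v\v^\top$, or $\v\v^\top\D\v\v^\top$ for diagonal $\D$ — at most two factors of $\v\v^\top$ appear, one from each copy of $\mathbf P,\mathbf Q$. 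Taking $\E_\v$, it then remains to check that $\E[\v\v^\top\D]$, $\E[\D\v\v^\top]$ and $\E[\v\v^\top\D\v\v^\top]$ are diagonal. This is exactly where Assumption~\ref{assume:symmetric_inputs} enters: under the coordinatewise sign-symmetry of $\v$, the off-diagonal entry $(i,j)$ with $i\ne j$ of each of these expectations is a moment containing at least one coordinate to an odd power (e.g.\ $\E[v_iv_jv_k^2]=0$), hence vanishes. Therefore $\mathcal{T}(\M)\in\S$, and the induction closes, proving that every $\tilde\M_n$, and thus $\M_n$, has the stated block structure.

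I expect the third step to be the main obstacle: not the linear algebra per se, but making the moment bookkeeping airtight and, crucially, recognizing that the vanishing of off-diagonal terms requires the joint sign-flip invariance of $\v$ (equivalently, the independent-coordinate case noted after the assumption), rather than mere marginal symmetry of each $v_i$. A secondary point to handle carefully is the orthogonal-conjugation convention, so that one lands on the exact form $\U^\top\diag(\cdot)\U$ of \eqref{eq:Mn_decomp} rather than $\U\diag(\cdot)\U^\top$.
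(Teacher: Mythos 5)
Your proof is correct and follows essentially the same route as the paper's: an induction on $n$ through the one-step recursion $\M_{n+1}=\E[\A_{n+1}^\top\M_n\A_{n+1}]$, with the decisive step being that $\E[\v\v^\top\D\v\v^\top]$ (and the lower-order moments) are diagonal because coordinatewise sign-flips annihilate every off-diagonal entry --- exactly the content of the paper's Lemma~\ref{lemma:symmetric_inputs}. The only cosmetic differences are that you conjugate by $\diag(\U,\U)$ up front rather than carrying $\U$ through each block, and that the paper additionally splits $\A_n$ into its mean plus a zero-mean perturbation so as to exhibit the noise matrix $\K$ explicitly (needed for Lemma~\ref{lemma:blambdan}, not for this statement); the $\U$ versus $\U^\top$ issue you flag is real but is an inconsistency already present between Eq.~\eqref{eq:cov} and Assumption~\ref{assume:symmetric_inputs} in the paper itself.
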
 
 Remarkably, $\U$ is independent of the number iterations. Hence, we only need to track diagonal matrices $\blambda_n^{(1-3)}$ in the derived expression for $\M_n$ in Eq.~\eqref{eq:Mn_decomp}. The next lemma establishes a closed-form expression for $\blambda_n^{(1-3)}$.  
\begin{lemma} \label{lemma:blambdan}
Consider $\blambda_{n}^{(1)}$--$\blambda_n^{(3)}$ in Eq.~\eqref{eq:Mn_decomp}.  Then
\begin{align*}
   \a_{n} = \C_{\Theta}^n(\P) \ones, \quad \a_{n} =  \begin{bmatrix}
    \blambda_{n}^{(1)} & \blambda_{n}^{(2)} & \blambda_{n}^{(3)} 
    \end{bmatrix}
\end{align*}
holds (see Eq.~\eqref{eq:cmatrix} for the exact expression of $\C_{\Theta}(\P)$). 
\end{lemma}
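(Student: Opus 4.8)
The plan is to produce a single linear one-step recursion for the second-moment matrix $\M_n=\E[\B_n^\top\B_n]$ and then show that, when this recursion is written in the block-diagonal coordinates supplied by Lemma~\ref{lemma:Mn_blockstructure}, it is exactly left-multiplication by $\C_\Theta(\P)$. First I would derive the recursion by peeling off the leftmost factor of $\B_n^\top\B_n$. Since $\B_n=\A_n\cdots\A_1$, we have $\B_n^\top\B_n=\A_1^\top(\A_n\cdots\A_2)^\top(\A_n\cdots\A_2)\A_1$, and the inner factor $(\A_n\cdots\A_2)^\top(\A_n\cdots\A_2)$ is independent of $\A_1$ and, because the $\A_i$ are i.i.d., has the same law as $\B_{n-1}^\top\B_{n-1}$. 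Taking expectations and using independence gives the clean linear recursion
\[ \M_n \;=\; \E\big[\A^\top \M_{n-1}\,\A\big] \;=:\; \mathcal{T}(\M_{n-1}), \qquad \M_0=\I, \]
where $\A$ is a generic i.i.d.\ copy of $\A_n$. (Note that the naive forward iteration $\B_{n+1}=\A_{n+1}\B_n$ does \emph{not} close on $\M_n$; peeling from the left is what makes the operator $\mathcal{T}$ act on $\M_{n-1}$ itself.)

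Second, I would pass to the eigenbasis of $\cov$ by conjugating every $2\times2$ block with $\U$. This turns the random rank-one matrix $\x\x^\top$ into $\v\v^\top$ with $\E[\v\v^\top]=\diag(\bsigma)$, and by Lemma~\ref{lemma:Mn_blockstructure} it reduces $\M_n$ to the triple of diagonal vectors $(\blambda^{(1)}_n,\blambda^{(2)}_n,\blambda^{(3)}_n)$. Since $\mathcal{T}$ is linear and (as the computation below confirms) preserves the symmetric block structure of Lemma~\ref{lemma:Mn_blockstructure}, it descends to a linear map on this triple, and it suffices to identify that map.

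Third, I would compute $\E[\hat\A^\top \hat\M\,\hat\A]$ blockwise, with $\hat A_{11}=(1+\beta)\I-(1+\alpha)\gamma\,\v\v^\top$ and $\hat A_{12}=\alpha\gamma\,\v\v^\top-\beta\I$ (both symmetric, since $\v\v^\top$ is). The only non-elementary expectations are $\E[\v\v^\top]=\diag(\bsigma)$ and the quartic term $\E[\v\v^\top\diag(\lambda)\,\v\v^\top]$, which by the moment structure of Assumption~\ref{assume:symmetric_inputs} (odd moments vanish, $\E[v_i^2 v_j^2]=\sigma_i\sigma_j$ for $i\neq j$, $\E[v_i^4]=k_i$) is diagonal with diagonal vector $(\diag(\k-(\bsigma)^2)+\bsigma\bsigma^\top)\lambda$. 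Collecting the three resulting diagonal blocks and matching them against $\D_1=(1+\beta)\I-\gamma(1+\alpha)\diag(\bsigma)$ and $\D_2=\alpha\gamma\diag(\bsigma)-\beta\I$ reproduces the three rows of $\C_\Theta(\P)$; in particular the top-left block gives $(\D_1^2+(1+\alpha)^2\K)\blambda^{(1)}+2\D_1\blambda^{(2)}+\blambda^{(3)}$, and the off-diagonal block gives $(\D_1\D_2-\alpha(1+\alpha)\K)\blambda^{(1)}+\D_2\blambda^{(2)}$. The delicate point — and the step I expect to be the main obstacle — is the fourth-moment bookkeeping: the $-\gamma^2\diag(\bsigma)^2$ generated inside $\D_1^2$ (respectively $\D_1\D_2$, $\D_2^2$) must combine with the $+\gamma^2(\diag(\k-(\bsigma)^2)+\bsigma\bsigma^\top)$ coming from the quartic term to produce \emph{exactly} $\K=\gamma^2(\diag(\k-2(\bsigma)^2)+\bsigma\bsigma^\top)$, and one must also check that the two off-diagonal blocks come out equal so that the structure is genuinely preserved.

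Finally, stacking the three diagonal vectors into $\a_n=[\,\blambda^{(1)}_n\ \ \blambda^{(2)}_n\ \ \blambda^{(3)}_n\,]$, the previous step reads $\a_n=\C_\Theta(\P)\,\a_{n-1}$. Iterating from the base case, which is the reduced form of $\M_0=\I$ (namely $\blambda^{(1)}_0=\blambda^{(3)}_0=\ones$ and $\blambda^{(2)}_0=\zero$), yields the claimed closed form $\a_n=\C_\Theta^n(\P)\,\a_0$ by a one-line induction.
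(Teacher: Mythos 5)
Your proposal is correct and follows essentially the same route as the paper: the one-step recursion $\M_{n+1}=\E\left[\A_{n+1}^\top \M_n \A_{n+1}\right]$, reduction to the three diagonal blocks in the eigenbasis of $\cov$ via the quartic-moment identity (Lemma~\ref{lemma:symmetric_inputs}), and a one-line induction. The only differences are organizational: the paper first splits $\A_{n+1}$ into its mean $\A$ plus the zero-mean noise $-\gamma\epsilon$ so that the fourth-moment correction $\K$ appears as a separate variance term rather than through the cancellation bookkeeping you describe, and your justification of why the recursion closes on $\M_n$ (peeling the i.i.d.\ factor from the correct end of the product) is in fact more careful than the paper's, which asserts the recursion without comment.
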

For the proof of the last two Lemmas, we refer the reader to Corollary \ref{sec:proof_of_theorem_appendix} in the appendix. Remarkably, the result of the last Lemma allows to compute the eigenvalues of $\M_n= \E\left[ \B_n^\top \B_n \right] $ in a closed form\footnote{Exploiting the block diagonal structure of $\M_n$, we can extract eigenvalues values of $\M_n$ from $\a_n$.}. Furthermore, it obtains a closed form for the Lyapunov exponent: $\lambda_1^2 = \| \a_n \|_{\max}$. Combining the result of the Lemma~\ref{lemma:Mn_blockstructure} and \ref{lemma:blambdan} concludes the proof of Theorem~\ref{thm:convergence_char}: A straight-forward application of the pseudospectrum properties (developed in Lemma~\ref{lemma:pseudospectrum_power}) yields 
\begin{align*} \label{eq:an_bound}
    \| \a_n \| \leq 3d \| \C_\Theta \|^{n+1}_{\rho_\epsilon}/\epsilon. 
\end{align*}
The result of Lemma~\ref{lemma:Mn_blockstructure} concludes the desired bound:  
\begin{align*}
    \E \| \v_n \|^2 \leq \v_0^\top \M_n  \v_0 \leq 18 d^{3/2} \| \C_\Theta \|^{n+1}_{\rho_\epsilon}\E \|\v_0\|^2 /\epsilon. 
\end{align*}

\section{Experiments}
We empirically validate the established result in Theorem~\ref{thm:convergence_char}, and Lemmas~\ref{lemma:gaussian_gaurantees} and~\ref{lemma:uniform_gaurantees}. Then, we empirically show that the result of Theorem~\ref{thm:convergence_char} may hold even on real data sets on which the Assumption~\ref{assume:symmetric_inputs} does not necessarily hold. 
\paragraph{Experiments on examples.}
Through an experiment, we check whether the accelerated rate established in Lemmas~\ref{lemma:gaussian_gaurantees} and~\ref{lemma:uniform_gaurantees} are achievable. Recall that these results hold for examples~\ref{exam:gaussian_example} and \ref{exam:uniform_example}. Our experimental results, presented in Figure~\ref{fig:accelerated_rates_real_world_theory}, confirm that SAGD-chain enjoys the accelerated mixing rate $\bigo((1-\sqrt{\mu/L})^n)$ using the choice of parameters in corresponding lemmas. 
\begin{figure}[h!]
    \centering
    \begin{tabular}{c}
        \includegraphics[width=0.4\textwidth]{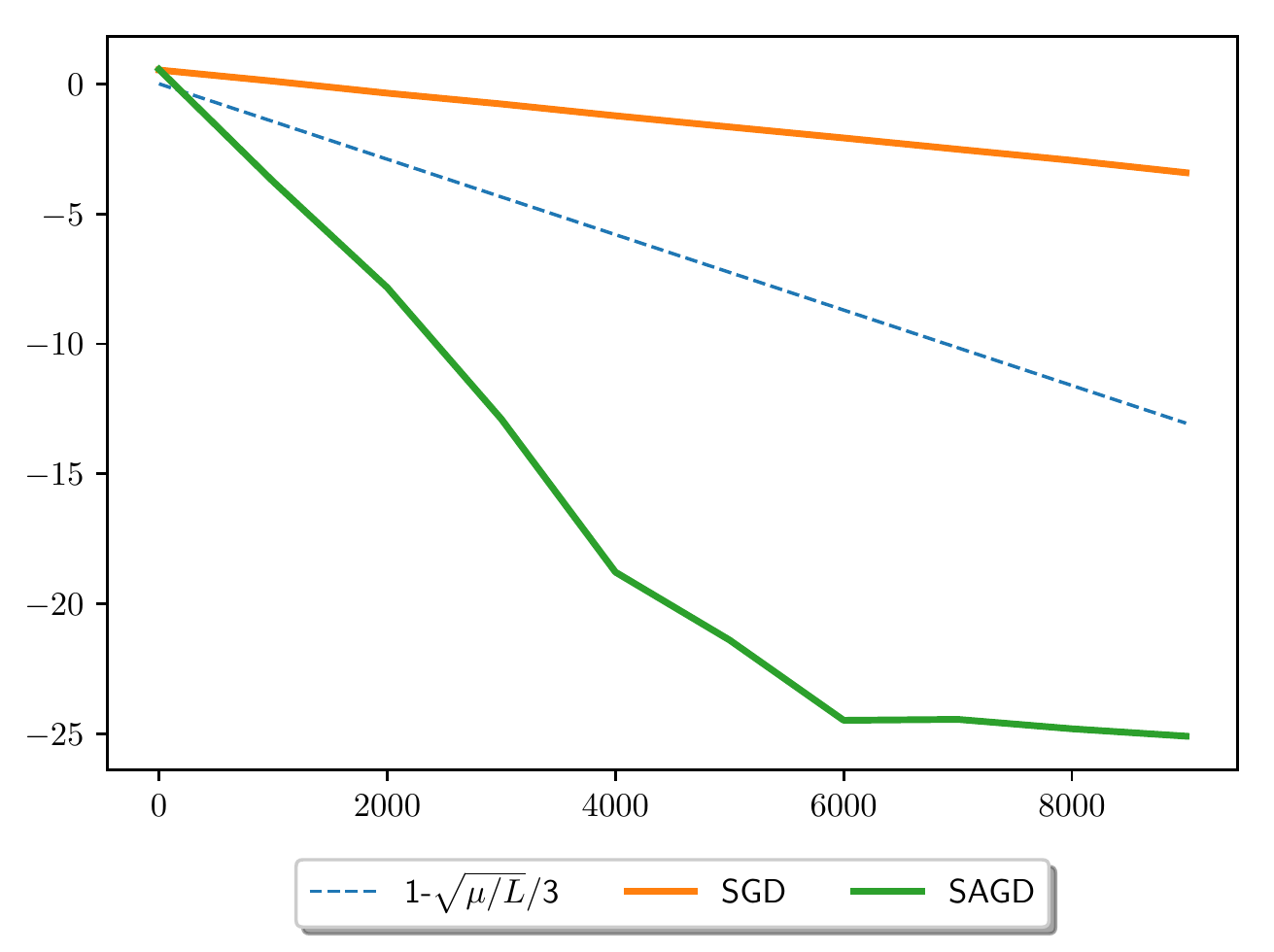}  \\
        Example~\ref{exam:gaussian_example}  \\ \includegraphics[width=0.4\textwidth]{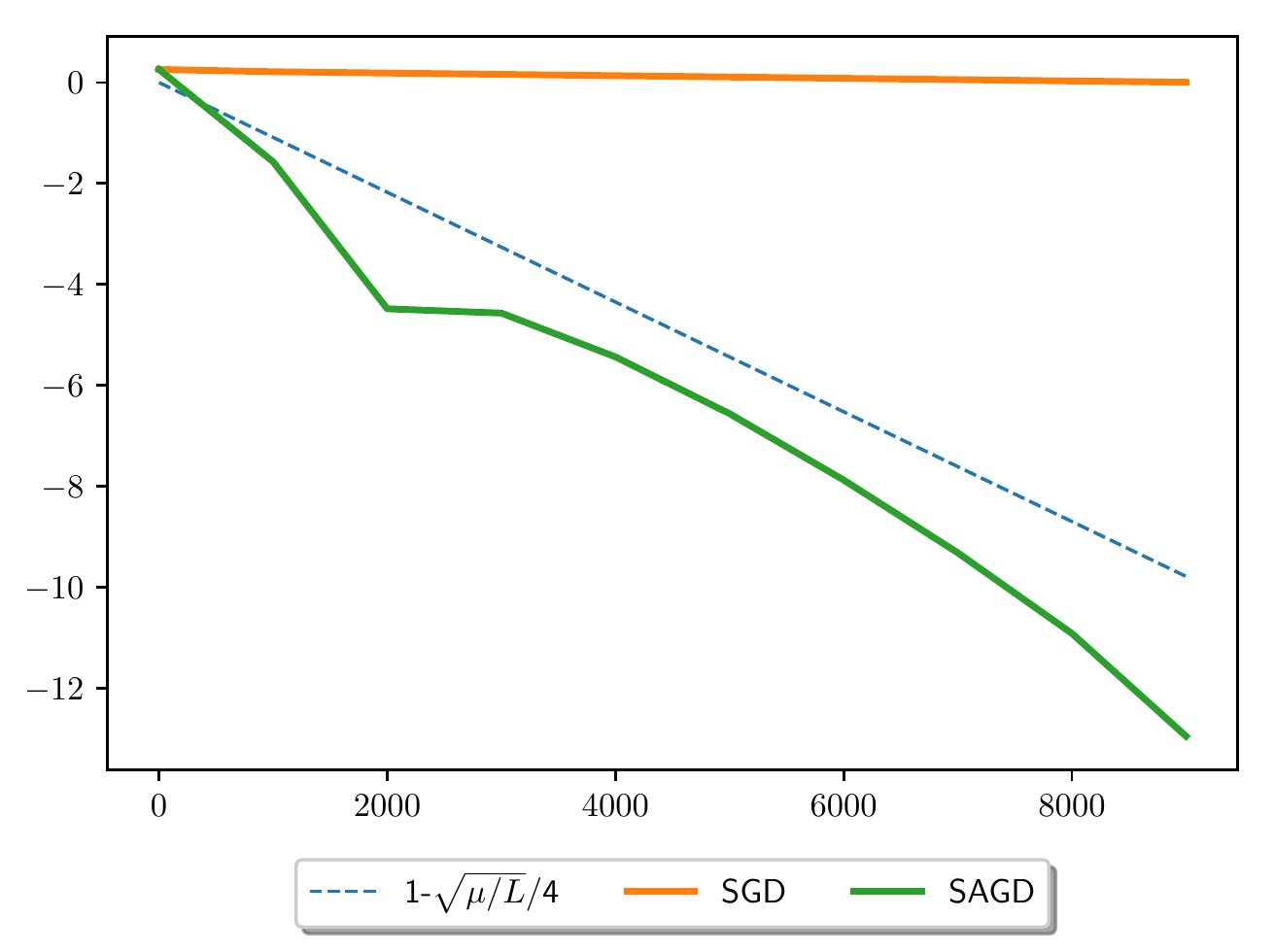}  \\
        Example~\ref{exam:uniform_example} 
    \end{tabular}
    \caption{Mixing of SAGD on examples: horizontal axis shows number of iterations and vertical axis is $\log_{10}\E\| \u_{n}^{(0)} - \u_n^{(1)} \|^2 $ where the expectation is taken over 5 independent runs. Remarkably, $W_2^2(\nu(\u_n^{(0)}),\nu(\u_n^{(1)})) \leq \E\| \u_{n}^{(0)} - \u_n^{(1)} \|^2$. See Lemma~\ref{lemma:gaussian_gaurantees} and ~\ref{lemma:uniform_gaurantees} for details on the parameter choice.  } 
    \label{fig:accelerated_rates_examples}
\end{figure}

\paragraph{Validation of the establish mixing rate.}
Next, we compare the theoretical mixing rate established in Theorem~\ref{thm:convergence_char}, with empirical ones under different parameter configurations $\Theta$. Experiments run on  Example~\ref{exam:gaussian_example} and \ref{exam:uniform_example} and the comparison is represented in Table~\ref{tbl:gaussian} and \ref{tbl:symmetric}, respectively. A total iteration of $n=1000$ is employed for both two examples. This experiment confirms empirical rates are generally consistent with established theoretical mixing rates up to constant factors. We note that we estimate the Wasserstein distance with $\E \left[ \| \u_n^{(0)}- \u_n^{(1)} \|^2 \right]$ for the sake of simplicity. The expectation is taken over 10 independent runs. 

\begin{table}[h]
    \caption{$\E \left[ \| \u_n^{(0)}- \u_n^{(1)} \|^2 \right]$ for Example 8, $\mu=0.05$} \label{tbl:gaussian}
    \begin{center}
        \begin{tabular}{lll}
            \textbf{$\Theta=(\gamma,\beta,\alpha)$}  & Empirical rate &  Theoretical rate \\
            \hline \\
            $(10^{-1},0.95,2)$  & $e^{-0.0605n}$  & $ e^{-0.0568n}$ \\
            $(10^{-1},0.99,2)$  & $e^{-0.0164n}$  & $ e^{-0.0156n}$ \\
            $(10^{-1},0.95,3)$  & $ e^{-0.0628n}$  & $e^{-0.0623n}$ \\
            $(10^{-2},0.95,2)$  & $e^{-0.0260n}$  & $e^{-0.0257n}$ \\
        \end{tabular}
    \end{center}
\end{table}

\begin{table}[h]
    \caption{$\E \left[ \| \u_n^{(0)}- \u_n^{(1)} \|^2 \right]$ for Example 11, $\kappa^{-1}=20$} \label{tbl:symmetric}
    \begin{center}
        \begin{tabular}{lll}
            \textbf{$\Theta=(\gamma,\beta,\alpha)$}  & Empirical rate & Theoretical rate \\
            \hline \\
            $(2\times10^{-3},0.95,2)$  & $e^{-0.0626n}$   & $e^{-0.0527n}$ \\
            $(2\times10^{-3},0.99,2)$  & $e^{-0.0188n}$  & $ e^{-0.0118n}$ \\
            $(2\times10^{-3},0.95,3)$  & $e^{-0.0572n}$  & $ e^{-0.0537n}$ \\
            $(4\times10^{-4},0.95,2)$  & $e^{-0.0206n}$  & $e^{-0.0086n}$ \\
        \end{tabular}
    \end{center}
\end{table}

\paragraph{Experiments on real-world data.}
We substantiate our results on two real-world datasets: Boston Housing and California Housing. These datasets are 13 and 8 dimensional respectively. As a reprocessing step, we normalized inputs (this guarantees $L<1$). We further used $\ell_2$ regularization with the penalty factor $10^{-3}$ (this guarantees $\mu>10^{-3}$). The experiments compare contraction rate, established in Theorem~\ref{thm:convergence_char}, on these datasets. Although Assumption~\ref{assume:symmetric_inputs} does not hold for these datasets, we can reuse the choice of parameters in Lemma~\ref{lemma:gaussian_gaurantees} to achieve an accelerated rate. 

 \begin{figure}[h!]
    \centering
    \begin{tabular}{c c}
        \includegraphics[width=0.4\textwidth]{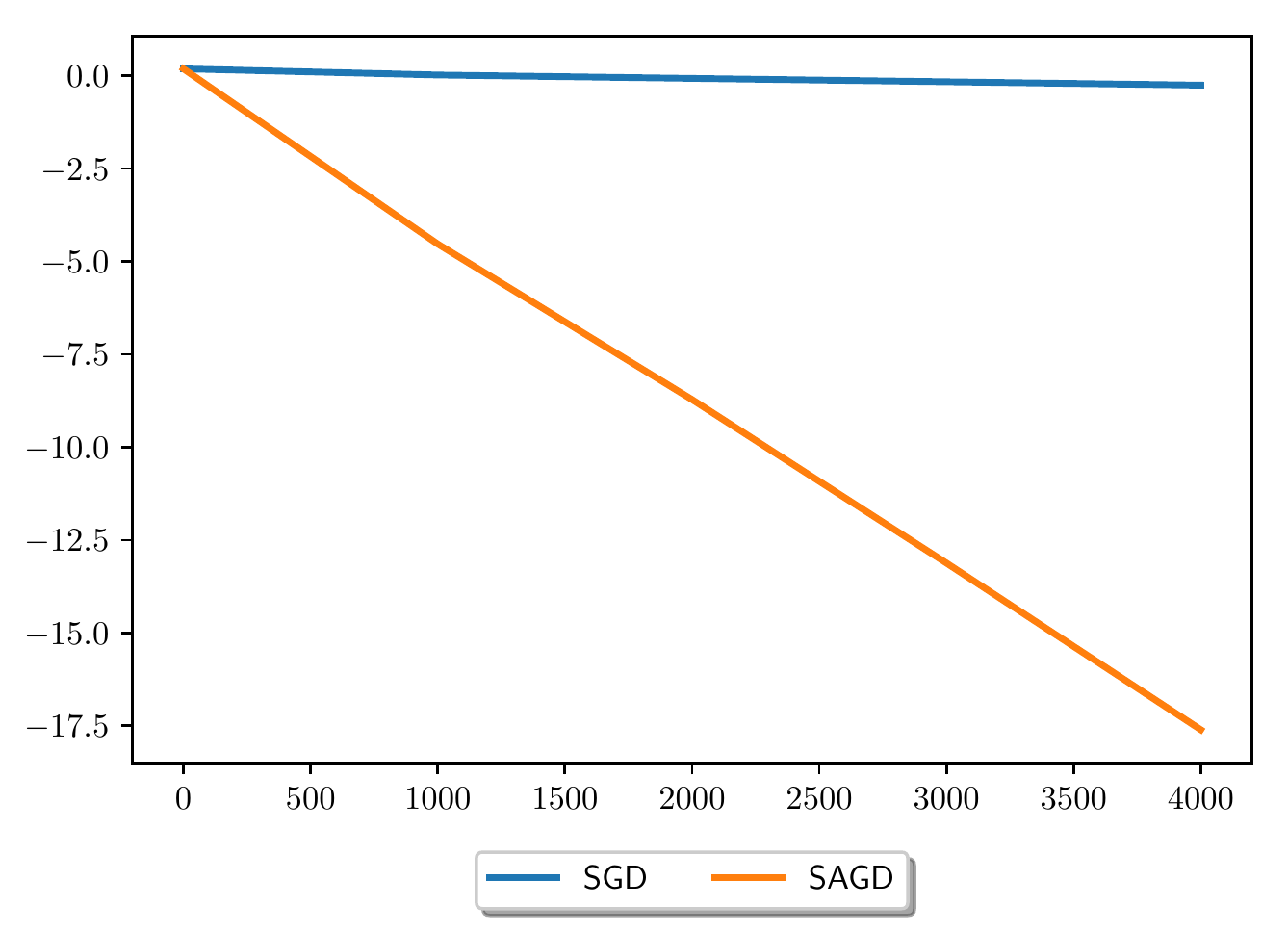}  \\
        Boston Housing  \\ 
        \includegraphics[width=0.4\textwidth]{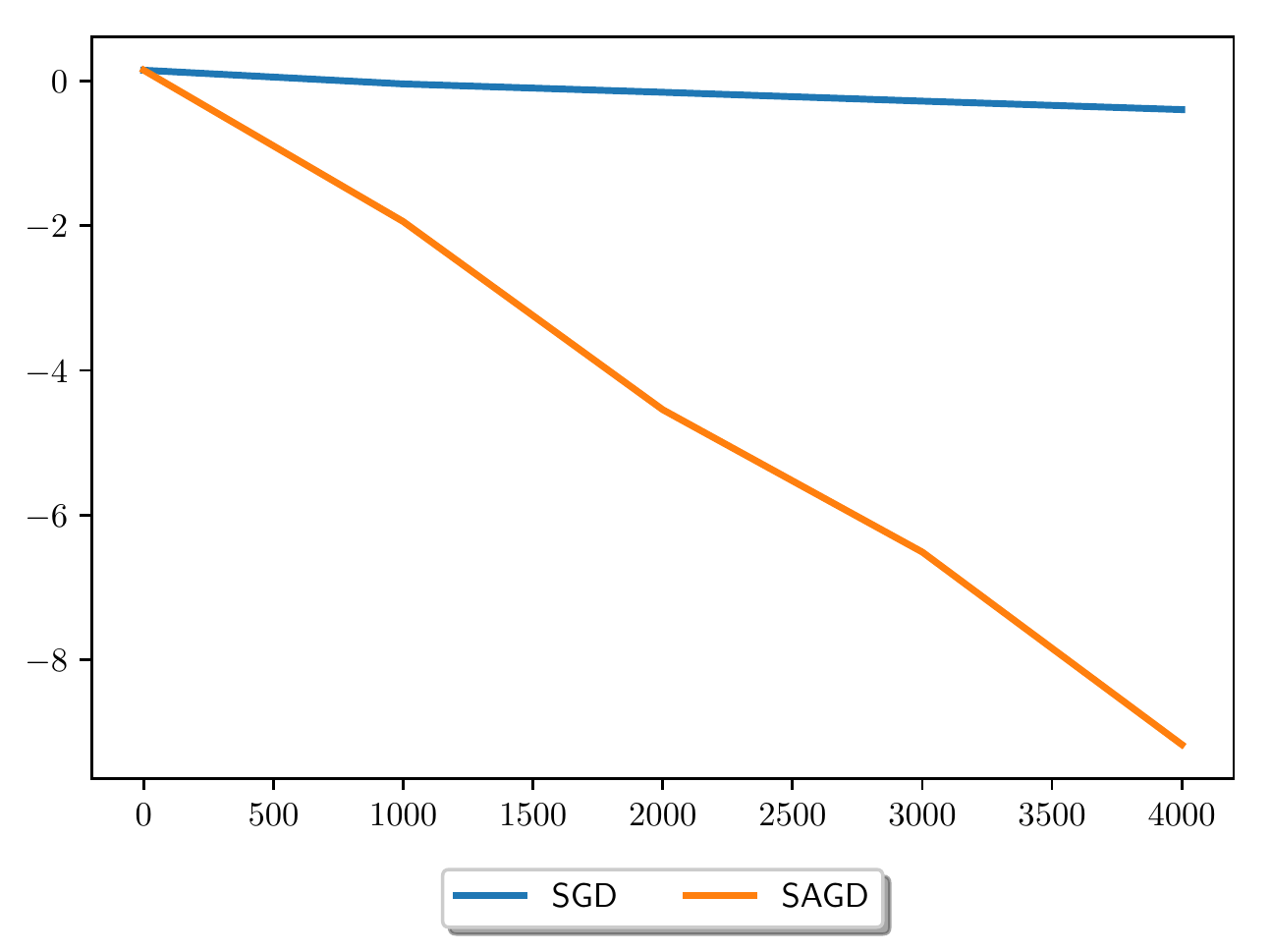}  \\
        California Housing 
    \end{tabular}
    \caption{Accelerated rates achieved on real-world data for theoretical parameters. The horizontal axis shows the number of stochastic iterations $n$ and the vertical axis is $\log_{10}\| \u_{n}^{(0)} - \u_n^{(1)} \|^2 $. }
    \label{fig:accelerated_rates_real_world_theory}
\end{figure}

\section{Discussions} \label{sec:discussions}
We have established the mixing rate for stochastic accelerated gradient descent on least-squares. Using examples, we have shown than that these iterates can mix faster than SGD-iterates depending on the first 4 moments of the input distribution. This result inspires two important follow-up topics: (i) mixing analysis of SAGD on more general optimization problems, and (ii) relaxing the regularity assumption~\ref{assume:symmetric_inputs} on the input distribution.  


\bibliographystyle{plain}
\bibliography{accelerated-mc.bib}
\newpage
\onecolumn
\newpage
\section{Supplementary}

\subsection{Consequences of our Assumptions}  \label{sec:examples}
We repeatedly use the following result on input distributions of assumption~\ref{assume:symmetric_inputs}.
\begin{lemma} \label{lemma:symmetric_inputs}
Suppose matrix $\M$ can be diagonalised as 
\begin{align} 
\M = \U \diag(\blambda) \U^\top
\end{align}
Under Assumption~\ref{assume:symmetric_inputs} holds, the following holds:
\begin{align} 
 \E \left[ \x \x^\top \M \x \x^\top \right] = \U^\top \diag(\blambda') \U, \quad \blambda'_i 
 = \left(\diag\left(\k- (\bsigma)^2\right)  + \bsigma \bsigma^\top\right) \blambda
\end{align}
\end{lemma}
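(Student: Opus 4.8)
The plan is to substitute the structural form $\x = \U\v$ from Assumption~\ref{assume:symmetric_inputs} and reduce the entire fourth-moment computation to a single expectation over the symmetric vector $\v$. Writing the hypothesis as $\M = \U\diag(\blambda)\U^\top$ and using $\U^\top\U = \I$, the two outer rotations on each side telescope:
\begin{align*}
\x\x^\top \M \x\x^\top = \U\,\v\v^\top\diag(\blambda)\v\v^\top\,\U^\top,
\end{align*}
so that $\E[\x\x^\top\M\x\x^\top] = \U\,\mathbf{T}\,\U^\top$ with $\mathbf{T} := \E[\v\v^\top\diag(\blambda)\v\v^\top]$. It then suffices to show that $\mathbf{T}$ is diagonal with $[\mathbf{T}]_{ii} = \blambda'_i$, after which conjugation by $\U$ delivers the claimed expression. (I note in passing that the natural computation lands on $\U\,\diag(\blambda')\,\U^\top$; the statement's $\U^\top\diag(\blambda')\U$ reflects the same $\U\!\leftrightarrow\!\U^\top$ convention the paper already uses in Eq.~\eqref{eq:cov}, and I would align with whichever orientation is fixed there.)

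First I would expand $\mathbf{T}$ entrywise. A direct multiplication gives $[\v\v^\top\diag(\blambda)\v\v^\top]_{ij} = v_i v_j \sum_k \lambda_k v_k^2$, hence $[\mathbf{T}]_{ij} = \sum_k \lambda_k\,\E[v_i v_j v_k^2]$. The next step kills the off-diagonal entries: for $i\neq j$ the monomial $v_i v_j v_k^2$ contains an odd power of the coordinate $v_i$ in every case ($k\notin\{i,j\}$ gives degree $1$, $k=i$ gives degree $3$, $k=j$ gives degree $1$), so invariance of the law of $\v$ under flipping the sign of $v_i$ forces $\E[v_i v_j v_k^2]=0$. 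This is precisely where Assumption~\ref{assume:symmetric_inputs} enters, and I would state explicitly that ``symmetric distribution'' is read as invariance of the joint law of $\v$ under coordinatewise sign changes.

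It remains to evaluate the diagonal entries $[\mathbf{T}]_{ii} = \sum_k \lambda_k\,\E[v_i^2 v_k^2]$. I would split the sum into the term $k=i$, contributing $\lambda_i\,\E[v_i^4]=\lambda_i k_i$, and the terms $k\neq i$, each contributing $\lambda_k\,\E[v_i^2 v_k^2] = \lambda_k\sigma_i\sigma_k$. Adding and subtracting the $k=i$ cross term then rewrites this as
\begin{align*}
[\mathbf{T}]_{ii} = (k_i - \sigma_i^2)\lambda_i + \sigma_i\sum_{k}\sigma_k\lambda_k = \big[\,(\diag(\k-(\bsigma)^2) + \bsigma\bsigma^\top)\,\blambda\,\big]_i,
\end{align*}
which is exactly $\blambda'_i$ as claimed.

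The main obstacle is the mixed fourth moment $\E[v_i^2 v_k^2]$ for $i\neq k$: coordinate sign symmetry alone does not factor it, so I expect to need the stronger reading of Assumption~\ref{assume:symmetric_inputs} under which the coordinates of $\v$ are independent (as the paper remarks its results extend to), giving $\E[v_i^2 v_k^2] = \sigma_i\sigma_k$. I would flag this factorization as the one substantive hypothesis used beyond pure symmetry, and check that it is consistent with the second-moment structure $\E[v_i v_j] = \sigma_i\,\delta_{ij}$ already imposed through the eigendecomposition in Eq.~\eqref{eq:cov}; everything else in the argument is bookkeeping of the sign-flip cancellations.
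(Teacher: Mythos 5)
Your proposal is correct and follows essentially the same route as the paper's own proof: conjugate by $\U$ to reduce to $\E\left[\left(\sum_i \lambda_i v_i^2\right)\v\v^\top\right]$, kill the off-diagonal entries by sign symmetry, and split the diagonal sum into the $k=i$ and $k\neq i$ terms. Your flagged caveat is well taken --- the paper's proof also silently uses $\E\left[v_i^2 v_k^2\right]=\sigma_i\sigma_k$ for $i\neq k$, which requires independence (or at least uncorrelated squares) beyond coordinatewise sign symmetry --- and your observation about the $\U$ versus $\U^\top$ orientation matches a genuine notational slip in the statement.
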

\begin{proof}
Using the structure of $\M$, we first simplify the expression of $\x^\top \M \x$ :  
\begin{align} 
 \x^\top \M \x & = \v^\top \U^\top \U \diag(\blambda) \U^\top \U \v \\ 
 & = \v^\top \diag(\blambda) \v = \sum_{i=1}^d \v_i^2 \blambda_i 
\end{align} 
Replacing this into the desired expectation yields 
\begin{align} \label{eq:symmetric_xMx_expansion}
 \E \left[ \x \x^\top \M \x \x^\top \right]  = \U \E \left[ \left( \sum_{i=1}^d \v_i^2 \blambda_i 
 \right) \v \v^\top \right] \U. 
\end{align} 
Since distribution of $\v_i$ is symmetric, we conclude 
\begin{align} \label{eq:off_diagonal_symmetric}
 \E \left[ \left( \sum_{i=1}^d \v_i^2 \blambda_i 
 \right) \v_p \v_q \right] =  \E \left[ \left( \sum_{i=1}^d \v_i^2 \blambda_i 
 \right) -\v_p \v_q \right] = 0
\end{align} 
holds for all $\v_p \neq \v_q$. For $p=q$, 
\begin{align} 
\E \left[  \left( \sum_{i=1}^d \v_i^2 \blambda_i 
 \right) \v_p^2 \right] = \E \left[ \v_p^4 \right] - (\E \left[\v_p^2\right])^2 + \langle \E\left[ \v^2 \right], \blambda \rangle \E\left[ \v_p^2 \right]
\end{align}
holds. Replacing the above result together with Eq.~\eqref{eq:off_diagonal_symmetric} into Eq.~\eqref{eq:symmetric_xMx_expansion} concludes the proof.  

\end{proof}

A straightforward application of the above result concludes the following corollary on Gaussian inputs.  
\begin{corollary}
\label{cor:gaussianity_consequence}
Suppose that $\x$ is drawn from a zero-mean multivariate normal distribution, i.e. ${\x \sim \N(0,\cov)}$, then the following holds for every matrix $\M$: 
\begin{align} \label{eq:gaussianity_consequence}
    \E \left[ \x \x^\top \M \x \x^\top \right] = 2 \cov \M \cov + \langle \M, \cov \rangle \cov 
\end{align}
where $ \langle \M, \cov \rangle$ is the inner product of vectorized $\M$ and $\cov$.
\end{corollary}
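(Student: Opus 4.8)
The plan is to reduce the claim to the single case of a diagonal matrix acting on an isotropic Gaussian, where Lemma~\ref{lemma:symmetric_inputs} applies verbatim, and then to undo the reduction by two elementary changes of variable. First, since $\x^\top\M\x$ is a scalar it equals $\x^\top\M_s\x$ with $\M_s:=\tfrac12(\M+\M^\top)$, so the left-hand side depends on $\M$ only through its symmetric part; I may therefore assume $\M=\M^\top$ (this is the only case used in the sequel, and it is how the stated identity should be read). Next I whiten the input: writing $\x=\cov^{1/2}\g$ with $\g\sim\N(0,\I)$ gives
\begin{align*}
    \E\left[\x\x^\top\M\x\x^\top\right] = \cov^{1/2}\,\E\left[\g\g^\top\N\g\g^\top\right]\,\cov^{1/2}, \qquad \N:=\cov^{1/2}\M\cov^{1/2},
\end{align*}
which moves all covariance dependence outside the expectation and leaves an isotropic fourth-moment tensor to evaluate.

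To evaluate the inner expectation I diagonalize $\N=\Qm\,\diag(\d)\,\Qm^\top$ and exploit rotational invariance of the standard Gaussian. Since $\Qm^\top\g\sim\N(0,\I)$,
\begin{align*}
    \E\left[\g\g^\top\N\g\g^\top\right] = \Qm\,\E\left[\g'\g'^\top\diag(\d)\g'\g'^\top\right]\Qm^\top, \qquad \g':=\Qm^\top\g\sim\N(0,\I).
\end{align*}
Now the inner matrix $\diag(\d)$ is diagonal and $\g'$ is standard Gaussian, which is exactly the regime covered by Lemma~\ref{lemma:symmetric_inputs} with $\U=\I$, $\bsigma=\ones$, and normal fourth moments $\k=3\,\ones$ (as $\E[(\g'_i)^4]=3$). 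The lemma yields $\blambda'=\left(\diag(\k-\bsigma^2)+\bsigma\bsigma^\top\right)\d=(2\I+\ones\ones^\top)\d=2\d+(\ones^\top\d)\ones$, i.e.
\begin{align*}
    \E\left[\g'\g'^\top\diag(\d)\g'\g'^\top\right] = 2\diag(\d)+\tr(\N)\,\I.
\end{align*}

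Finally I transform back. Conjugating by $\Qm$ gives $\E[\g\g^\top\N\g\g^\top]=2\N+\tr(\N)\I$, and conjugating by $\cov^{1/2}$ gives
\begin{align*}
    \E\left[\x\x^\top\M\x\x^\top\right] = 2\,\cov^{1/2}\N\cov^{1/2}+\tr(\N)\,\cov = 2\,\cov\M\cov+\langle\M,\cov\rangle\,\cov,
\end{align*}
where I use $\cov^{1/2}\N\cov^{1/2}=\cov\M\cov$ and $\tr(\N)=\tr(\cov^{1/2}\M\cov^{1/2})=\tr(\M\cov)=\langle\M,\cov\rangle$. The only genuinely delicate point is the first reduction: Lemma~\ref{lemma:symmetric_inputs} covers only matrices diagonalized in the data's eigenbasis $\U$, so a generic $\M$ is not directly in scope, and it is precisely the whitening-plus-rotation step that lets me apply the lemma to an arbitrary symmetric $\M$ by trading it for a diagonal matrix against a rotationally invariant Gaussian. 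As an independent cross-check, the same conclusion drops out in one line from Isserlis' theorem $\E[\x_a\x_b\x_c\x_d]=\cov_{ab}\cov_{cd}+\cov_{ac}\cov_{bd}+\cov_{ad}\cov_{bc}$, which directly produces $\cov\M\cov+\cov\M^\top\cov+\langle\M,\cov\rangle\cov$ and hence the stated identity for symmetric $\M$.
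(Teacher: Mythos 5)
Your proof is correct, and it takes a genuinely more complete route than the paper does. The paper offers no argument beyond the remark that the corollary is ``a straightforward application'' of Lemma~\ref{lemma:symmetric_inputs}; read literally, that lemma only covers matrices of the form $\M=\U\diag(\blambda)\U^\top$, i.e.\ matrices co-diagonalizable with $\cov$ (which is all the paper ever uses, since Lemma~\ref{lemma:example1support} only needs $\M=\I$), so the claim ``for every matrix $\M$'' is not actually delivered by the paper's implicit argument. Your whitening-plus-rotation step is exactly the missing ingredient: by passing to $\g\sim\N(0,\I)$ and diagonalizing $\cov^{1/2}\M\cov^{1/2}$ against a rotationally invariant Gaussian, you legitimately invoke the lemma with $\U=\I$, $\bsigma=\ones$, $\k=3\ones$ for an \emph{arbitrary} symmetric $\M$, and the bookkeeping ($2\diag(\d)+\tr(\cov^{1/2}\M\cov^{1/2})\I$, then conjugation back) is all correct. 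You are also right to flag the symmetry caveat: by Isserlis the general identity is $\E[\x\x^\top\M\x\x^\top]=\cov\M\cov+\cov\M^\top\cov+\langle\M,\cov\rangle\cov$, so the stated form with the factor $2$ holds only for symmetric $\M$ (or with $\M$ replaced by its symmetric part), and the paper's ``every matrix'' phrasing is slightly too strong. Your Isserlis cross-check is in fact the shortest self-contained proof and arguably preferable to routing through Lemma~\ref{lemma:symmetric_inputs} at all; what the lemma-based route buys is consistency with the paper's general (non-Gaussian) fourth-moment machinery.
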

Using the result of last corollary, we can establish a lower-bound on strong grow constant in Eq.~\eqref{assum:sgc}.
\begin{lemma} \label{lemma:example1support}
    Consider the regression objective $f$ on Gaussian input $\x \sim \N(0,\cov)$ and suppose that $y=0$. There exists a vector $\w_0 \in \R^d$ such that
    \begin{align}
        \E_\z \left[\| \nabla f_\z(\w_0) \|^2 \right] \geq L/\mu \| \nabla f(\w_0) \|^2.
    \end{align} 
\end{lemma}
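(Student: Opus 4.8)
The plan is to exhibit an explicit witness $\w_0$ and reduce both sides of the claimed inequality to quadratic forms. Setting $y=0$ in the gradient formulas of Eq.~\eqref{eq:grads} gives $\nabla f_\z(\w) = \x\x^\top\w$ and $\nabla f(\w) = \cov\w$, so that
\begin{align*}
\E_\z\left[\|\nabla f_\z(\w)\|^2\right] = \w^\top \E\left[\x\x^\top\x\x^\top\right]\w, \qquad \|\nabla f(\w)\|^2 = \w^\top\cov^2\w.
\end{align*}
The first step is therefore to obtain a closed form for the fourth-moment matrix $\E[\x\x^\top\x\x^\top]$.

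For this I would invoke Corollary~\ref{cor:gaussianity_consequence} with $\M = \I$, which yields $\E[\x\x^\top\x\x^\top] = 2\cov^2 + \tr(\cov)\,\cov$, since $\langle \I,\cov\rangle = \tr(\cov)$. Substituting this back, the strong-growth ratio becomes
\begin{align*}
\frac{\E_\z\left[\|\nabla f_\z(\w)\|^2\right]}{\|\nabla f(\w)\|^2} = \frac{2\w^\top\cov^2\w + \tr(\cov)\,\w^\top\cov\w}{\w^\top\cov^2\w} = 2 + \tr(\cov)\,\frac{\w^\top\cov\w}{\w^\top\cov^2\w}.
\end{align*}
The key observation is that the Rayleigh-type quotient $\w^\top\cov\w / \w^\top\cov^2\w$ is largest along the eigenvector of $\cov$ of smallest eigenvalue, reflecting that the relative gradient noise is greatest in the low-curvature directions.

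The final step is to choose $\w_0$ to be the unit eigenvector of $\cov$ associated with the smallest eigenvalue $\mu = \sigma_1$. Then $\w_0^\top\cov\w_0 = \mu$ and $\w_0^\top\cov^2\w_0 = \mu^2$, so the ratio evaluates to $2 + \tr(\cov)/\mu$. Bounding $\tr(\cov) = \sum_{i}\sigma_i \geq \sigma_d = L$ gives a ratio of at least $2 + L/\mu \geq L/\mu$, which is the claim. I do not anticipate a genuine technical obstacle here: the entire content is the choice of the witness $\w_0$, and everything else follows from the Gaussian fourth-moment identity already supplied by Corollary~\ref{cor:gaussianity_consequence} together with the elementary trace bound.
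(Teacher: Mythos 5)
Your proof is correct and follows essentially the same route as the paper: the same witness $\w_0$ (the eigenvector of $\cov$ for the smallest eigenvalue $\mu$), the same application of Corollary~\ref{cor:gaussianity_consequence} with $\M=\I$, and the same trace bound $\tr(\cov)\geq L$. If anything, you apply the corollary more carefully --- the paper writes $\w_0^\top\cov^2\w_0+\tr[\cov]\,\w_0^\top\cov\w_0$, dropping the factor $2$ on the first term, whereas your $2+\tr(\cov)/\mu \geq L/\mu$ is the faithful computation; both versions yield the claimed inequality.
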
 
\begin{proof} 
Let $\w_0$ be the eigenvector of $\cov$ associated with the smallest eigenvalue of $\cov$, i.e. ${\cov \w_0 = \mu \w_0}$. Then we use the result of corollary~\ref{cor:gaussianity_consequence} to compute expected norm of stochastic gradients as 
\begin{align}
    \E \| \nabla f_\z(\w_0) \|^2 & = \w^\top_0 \E \left[ \x \x^\top \x \x^\top \right] \w_0 \\ 
    & \stackrel{\eqref{eq:gaussianity_consequence}}{=} \w_0^\top \cov^2 \w_0 + \tr[\cov]\w_0^\top \cov \w_0 \\ 
    & = \mu^2 + \tr[\cov] \mu \geq \mu (\mu+L)
\end{align}
The norm of gradient evaluated at $\w_0$ is 
\begin{align}
    \|\nabla f(\w) \|^2 = \w^\top_0 \cov^2 \w_0 = \mu^2
\end{align}
To satisfy assumption of Eq.~\eqref{assum:sgc}, we need to choose $\rho$ such that
\begin{align}
    \rho \geq 1 + L/\mu \implies   \mu (\mu+L) \leq \rho \mu^2
\end{align}
\end{proof}
Correspondingly we have a similar lemma for Example~\ref{exam:uniform_example}.
\begin{lemma}\label{lemma:uniform_consequence}
    Consider the regression objective $f$ on the distribution defined in Example~\ref{exam:uniform_example} and suppose $y=0$. There exists a vector $\w_0$ such that 
    \begin{align}
        \E\|\nabla f_\z(\w_0)\|^2 = (L/2\mu+1)\|\nabla f(\w_0)\|^2.
    \end{align}
\end{lemma}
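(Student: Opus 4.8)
The plan is to follow the template of the Gaussian case in Lemma~\ref{lemma:example1support}, the only difference being that the concrete fourth moments of Example~\ref{exam:uniform_example} turn the inequality there into an exact identity. First I would set $y=0$, so that by Eq.~\eqref{eq:grads} the stochastic gradient reduces to $\nabla f_\z(\w)=\x\x^\top\w$, and choose $\w_0$ to be the eigenvector of $\cov$ associated with its \emph{smallest} eigenvalue. Since the two coordinates of Example~\ref{exam:uniform_example} are independent and symmetric, the covariance is already diagonal ($\U=\I$); the Rademacher coordinate carries the smaller variance while the uniform coordinate carries the larger one $L=\kappa^{-1}/3$, so this bottom eigenvector is simply the first coordinate axis $\e_1$. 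With this choice $\nabla f(\w_0)=\cov\w_0$ is proportional to $\w_0$, so $\|\nabla f(\w_0)\|^2$ equals the square of the smallest eigenvalue $\sigma_1^2$.

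Next I would evaluate $\E\|\nabla f_\z(\w_0)\|^2=\w_0^\top\E[\x\x^\top\x\x^\top]\w_0$. The clean route is to invoke Lemma~\ref{lemma:symmetric_inputs} with $\M=\I$, so that $\blambda=\ones$ and $\E[\x\x^\top\x\x^\top]=\U^\top\diag(\blambda')\U$ with $\blambda'_i=(k_i-\sigma_i^2)+\sigma_i\sum_j\sigma_j$; reading off the first coordinate (with $\U=\I$) gives $\E\|\nabla f_\z(\w_0)\|^2=(k_1-\sigma_1^2)+\sigma_1\,\tr[\cov]$. Equivalently, by independence of the two coordinates one expands directly $\E[x_1^2(x_1^2+x_2^2)]=\E[x_1^4]+\E[x_1^2]\E[x_2^2]$.

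The key simplification is that the Rademacher coordinate has \emph{zero excess kurtosis}, i.e. $k_1=\E[x_1^4]=(\E[x_1^2])^2=\sigma_1^2$. Hence the term $k_1-\sigma_1^2$ vanishes and $\E\|\nabla f_\z(\w_0)\|^2=\sigma_1\,\tr[\cov]=\sigma_1(\sigma_1+\sigma_2)$, while $\|\nabla f(\w_0)\|^2=\sigma_1^2$. Dividing, the ratio collapses to $1+\sigma_2/\sigma_1$ exactly, with no leftover higher-order slack. Finally, substituting the covariance eigenvalues together with the values $\mu,L$ computed for Example~\ref{exam:uniform_example} in Lemma~\ref{lemma:uniform_convergence_app} converts $1+\sigma_2/\sigma_1$ into the claimed constant $L/(2\mu)+1$, which proves the identity.

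The computation itself is elementary; the only things to get right are the choice of $\w_0$ (the bottom eigenvector, not the top one --- picking the uniform direction would give a different, non-matching constant) and the bookkeeping that makes the statement an \emph{equality}. Unlike the Gaussian case of Lemma~\ref{lemma:example1support}, where the factor $3$ in the Gaussian fourth moment forces one to state only a lower bound, here the vanishing of $k_1-\sigma_1^2$ for the Rademacher coordinate is precisely what removes the slack; I expect verifying this cancellation, and correctly matching the normalization hidden in $\mu$ for this example, to be the only place demanding care.
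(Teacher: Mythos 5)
Your proposal is correct and follows essentially the same route as the paper's proof: pick $\w_0 = \e_1$, the eigenvector for the smallest eigenvalue of $\cov$ (the Rademacher direction), compute $\w_0^\top\E[\x\x^\top\x\x^\top]\w_0 = k_1 + \sigma_1\sigma_2 = 1 + \kappa^{-1}/3$ using independence (the paper does this by direct calculation rather than by citing Lemma~\ref{lemma:symmetric_inputs}, but the computation is identical), and divide by $\|\nabla f(\w_0)\|^2 = 1$ before substituting the example's values $\mu = 1/2$, $L = \kappa^{-1}/3$. The only quibble is terminological: the Rademacher coordinate satisfies $k_1 = \sigma_1^2$, which is what makes the correction term vanish, but this is kurtosis $1$ (excess kurtosis $-2$), not ``zero excess kurtosis''; the mathematics is unaffected.
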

\begin{proof}
The covariance matrix of the variable $\x$ is
\begin{align}
    \cov :=\begin{bmatrix} 1 & 0 \\ 0 & \kappa^{-1}/3 \end{bmatrix}.
\end{align}
Let $\w_0$ be the eigenvector of $\cov$ associated with the smallest eigenvalue of $\cov$, i.e. $\cov\w_0=\w_0$. Similar to Corollary~\ref{cor:gaussianity_consequence} we can calculate
\begin{align}
    \E\big[\x\x^\top\x\x^\top\big]=\begin{bmatrix}\kappa^{-1}/3+1 & 0 \\ 0 & \kappa^{-2}/5\end{bmatrix}.
\end{align}
Then the expected norm of stochastic gradients is
\begin{align}
    \E\|\nabla f_\z(\w_0)\|^2 & = \w_0^\top\E\big[\x\x^\top\x\x^\top\big]\w_0 \\
    & =\w_0^\top\begin{bmatrix}\kappa^{-1}/3+1 & 0 \\ 0 & \kappa^{-2}/5\end{bmatrix}\w_0 \\
    & = \kappa^{-1}/3+1.
\end{align}
Writing in term of $\mu$ and $L$ leads to conclusion
\begin{align}
    \E\|\nabla f_\z(\w_0)\|^2 = (L/2\mu+1)\|\nabla f(\w_0)\|^2.
\end{align}
\end{proof}

\subsection{Proof of Theorem~\ref{thm:convergence_char}} \label{sec:proof_of_theorem_appendix}
\paragraph{Deterministic--stochastic decomposition.}
Recall matrix $\A_n$ in recurrence of Eq.~\eqref{eq:recurrrence_wns}:
\begin{align}
    \A_n := \begin{bmatrix} 
    (1+\beta) \I- (1+\alpha) \gamma \x_n\x^\top_n & \alpha \gamma \x_n \x^\top_n - \beta \I \\ 
    \I & 0 
    \end{bmatrix}.
\end{align}
$\A_{n+1}$ decomposes as 
 \begin{align} \label{eq:An_decomposition}
     \A_{n+1} = \A -\gamma 
     \epsilon, \quad \epsilon := \begin{bmatrix} 
     (1+\alpha) \Delta & -\alpha\Delta \\ 
     0 & 0 
     \end{bmatrix}, \Delta :=(\cov - \x \x^\top)
 \end{align}
 where $\E \left[ \epsilon \right] = 0$ and matrix $\A$ is 
 \begin{align} \label{eq:A}
     \A = \begin{bmatrix} 
     \A_1 & \A_2 \\ 
     \I & 0 
     \end{bmatrix}, \A_1 := (1+\beta)\I - \gamma (1+\alpha) \cov, \A_2 := \alpha \gamma \cov - \beta \I  
 \end{align}
 The above decomposition allows us to analyze the covariance term induced by the noise separately. Recall that matrices $\M_n$ and $\B_n$ were defined as 
 \begin{align} 
 \M_n = \E \left[ \B_n^\top \B_n \right], \quad \B_n = \A_n \A_{n-1} \dots \A_1  
 \end{align} 
 
 \begin{lemma} 
Suppose assumption~\ref{assume:symmetric_inputs} holds on $\x$. We further assume that $\M$ has the following block structure: 
\begin{align}
      \M_n = \begin{bmatrix} 
      \M_1 & \M_2 \\ 
      \M_2 & \M_3 
     \end{bmatrix}, \quad \M_{i} = \U \diag(\blambda_n^{(i)}) \U^\top
\end{align}

Then matrix $\M_{n+1}$ can be decomposed as
\begin{align} \label{eq:E_nplus_expansion_result}
     \M_{n+1} = \A^\top  \M_n \A + \gamma^2 \underbrace{\begin{bmatrix} (1+\alpha)^2 \K &  -\alpha(1+\alpha) \K \\
 -\alpha (1+\alpha) \K  & \alpha^2 \K 
 \end{bmatrix}}_{\text{due to the noise}}
\end{align}
where  \begin{align}
    \K := \U \diag\left( \left(\diag(\k - 2(\bsigma)^2) + \bsigma \bsigma^\top \right)\blambda_n^{(1)} \right) \U^\top
\end{align}
\end{lemma}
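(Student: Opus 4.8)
The plan is to obtain the recursion by peeling off the most recently drawn factor $\A_{n+1}$ from the random matrix product and exploiting its independence from the accumulated product $\B_n$. First I would observe that since $\A_1,\dots,\A_{n+1}$ are i.i.d., the law of $\B_n^\top\B_n$ is unchanged if the order of multiplication is reversed; hence for the purpose of computing $\M_n=\E[\B_n^\top\B_n]$ I may work with $\B_{n+1}=\B_n\A_{n+1}$, where $\A_{n+1}$ is independent of $\B_n$. Then $\M_{n+1}=\E[\A_{n+1}^\top(\B_n^\top\B_n)\A_{n+1}]$. Conditioning on $\z_1,\dots,\z_n$ and substituting the deterministic--stochastic decomposition $\A_{n+1}=\A-\gamma\epsilon$ from Eq.~\eqref{eq:An_decomposition}, the two cross terms that are linear in $\epsilon$ vanish because $\E[\epsilon]=0$. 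Since $\epsilon$ is a function of $\x_{n+1}$ and therefore independent of $\B_n$, the remaining second-order term only sees the averaged matrix $\M_n$, and I arrive at $\M_{n+1}=\A^\top\M_n\A+\gamma^2\E[\epsilon^\top\M_n\epsilon]$, which already has the advertised shape.

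The second step reduces the noise term to a single matrix expectation. Writing $\epsilon=\begin{bmatrix}(1+\alpha)\Delta & -\alpha\Delta\\ 0 & 0\end{bmatrix}$ with $\Delta=\cov-\x\x^\top$ symmetric, and plugging in the block form $\M_n=\begin{bmatrix}\M_1 & \M_2\\ \M_2 & \M_3\end{bmatrix}$, a direct block multiplication shows that the blocks $\M_2,\M_3$ drop out (the bottom row of $\epsilon$ is zero) and that every block of $\epsilon^\top\M_n\epsilon$ is a scalar multiple of the single matrix $\Delta\M_1\Delta$, with coefficients $(1+\alpha)^2$, $-\alpha(1+\alpha)$, $-\alpha(1+\alpha)$, $\alpha^2$. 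Taking expectations thus reduces the entire noise contribution to $\E[\Delta\M_1\Delta]$, and it remains only to show $\E[\Delta\M_1\Delta]=\K$.

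Finally I would evaluate $\E[\Delta\M_1\Delta]$. Expanding $(\cov-\x\x^\top)\M_1(\cov-\x\x^\top)$ and using $\E[\x\x^\top]=\cov$, the three lower-order terms collapse to $-\cov\M_1\cov$, leaving $\E[\Delta\M_1\Delta]=-\cov\M_1\cov+\E[\x\x^\top\M_1\x\x^\top]$. This is exactly where Assumption~\ref{assume:symmetric_inputs} enters: applying Lemma~\ref{lemma:symmetric_inputs} with $\M_1=\U\diag(\blambda_n^{(1)})\U^\top$, the fourth-moment term is diagonalized by the same $\U$ with eigenvalue vector $(\diag(\k-(\bsigma)^2)+\bsigma\bsigma^\top)\blambda_n^{(1)}$, while $\cov\M_1\cov=\U\diag(\diag((\bsigma)^2)\blambda_n^{(1)})\U^\top$. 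Subtracting $\diag((\bsigma)^2)$ from $\diag(\k-(\bsigma)^2)$ produces $\diag(\k-2(\bsigma)^2)$, so the combined eigenvalue vector is $(\diag(\k-2(\bsigma)^2)+\bsigma\bsigma^\top)\blambda_n^{(1)}$, which is precisely the vector defining $\K$.

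I expect the main obstacle to be the first step: ensuring the recursion yields $\A^\top\M_n\A$ rather than the structurally useless $\E[\B_n^\top\A^\top\A\B_n]$. This hinges on the reversal/exchangeability argument for the i.i.d.\ product together with the independence of the fresh factor $\A_{n+1}$ from $\B_n$, and on correctly attributing the averaging so that the noise term involves $\M_n$ and not the random $\B_n^\top\B_n$. Once that bookkeeping is in place, the remaining two steps are routine block algebra and a direct invocation of Lemma~\ref{lemma:symmetric_inputs}.
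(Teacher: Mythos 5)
Your proof is correct and follows the same route as the paper: the deterministic--stochastic split $\A_{n+1}=\A-\gamma\epsilon$, vanishing of the cross terms via $\E[\epsilon]=0$, the block computation showing only $\M_1$ survives in $\epsilon^\top\M_n\epsilon$ with coefficients $(1+\alpha)^2,-\alpha(1+\alpha),\alpha^2$, and Lemma~\ref{lemma:symmetric_inputs} to evaluate $\E[\Delta\M_1\Delta]=-\cov\M_1\cov+\E[\x\x^\top\M_1\x\x^\top]$, which is exactly where the $\k-2(\bsigma)^2$ comes from. The one point where you go beyond the paper is the reversal/exchangeability argument justifying $\M_{n+1}=\E[\A_{n+1}^\top\M_n\A_{n+1}]$ rather than the structurally useless $\E[\B_n^\top\E[\A^\top\A]\B_n]$; the paper asserts that identity as its first line without comment, and your concern there is well-placed and your resolution is the right one.
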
 
\begin{proof}
 Using the decomposition of matrix $\A_n$ in Eq.~\eqref{eq:An_decomposition}, we expand the expectation as  
 \begin{align}\label{eq:M_nplus}
     \M_{n+1}  & = \E \left[\A_{n+1}^\top  \M_n \A_{n+1}  \right] \\   
     & = \A^\top  \M_n \A + \gamma^2  \E \left[ \epsilon^\top  \M_n \epsilon \right] 
 \end{align}

We compute 2nd term (induced by the noise) 
\begin{align} \label{eq:variance_noise}
    \E \left[ \epsilon^\top  \M_n \epsilon \right]  = \begin{bmatrix} 
    (1+\alpha)^2 \K & -\alpha (1+\alpha) \K \\ 
    -\alpha (1+\alpha) \K  & \alpha^2 \K
    \end{bmatrix} 
\end{align}
The result of Lemma~\ref{lemma:symmetric_inputs} on symmetric inputs concludes the expression of $\K$. 
\end{proof}

\paragraph{The block structure of matrix $\M_n$.}
Lemmas~\ref{lemma:Mn_blockstructure} and \ref{lemma:blambdan} are consequences of the last lemma.
\begin{corollary} [Combined lemmas \ref{lemma:Mn_blockstructure} and \ref{lemma:blambdan}]
Suppose assumption~\ref{assume:symmetric_inputs} holds on $\x$; then 
\begin{align} \label{eq:Mn_condition}
      \M_n  = \begin{bmatrix} 
      \M_1 & \M_2 \\ 
      \M_2 & \M_3 
     \end{bmatrix}, \quad \M_{i} = \U \diag(\blambda_n^{(i)}) \U^\top
\end{align}
where 
\begin{align} \label{eq:an_appendix}
    \a_n=
     \C^n_\Theta
    \ones, \quad \a_n := \begin{bmatrix} 
    \blambda_n^{(1)} 
    & 
    \blambda_n^{(2)}
    &
    \blambda_n^{(3)}
     \end{bmatrix} 
\end{align}
and matrix $\C_\Theta$ is 
\begin{align}
    \C_\Theta & = \begin{bmatrix} \D_1^2 + (1+\alpha)^2 \K' & 2 \D_1 & \I \\ 
     \D_1 \D_2 - \alpha(1+\alpha) \K'& \D_2 & 0 \\ 
     \D_2^2 +\alpha^2 \K' & 0 & 0
    \end{bmatrix}  \label{eq:C_matrix}\\ 
    \D_1 &= (1+\beta) \I - \gamma (1+\alpha) \diag(\bsigma),\\ 
    \D_2 &=  \alpha \gamma \diag(\bsigma) - \beta \I \\ 
    \K' &= \gamma^2 \left( \diag(\k - 2(\bsigma)^2) + \bsigma \bsigma^\top \right) \label{eq:cmat_appendix}
\end{align}
\end{corollary}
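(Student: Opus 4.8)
The plan is to prove both claims simultaneously by induction on $n$, using the lemma proved just above (Eq.~\eqref{eq:E_nplus_expansion_result}) as the inductive engine. First I would record that, since $\M_n = \E[\B_n^\top\B_n]$ with $\B_n = \A_n\cdots\A_1$ a product of i.i.d. factors, one may peel off the innermost factor: the matrix $\A_n^\top\A_n$ sitting at the center of $\A_1^\top\cdots\A_n^\top\A_n\cdots\A_1$ is independent of the surrounding factors, so taking its expectation first and iterating shows that $\M_n$ obeys the forward recursion $\M_{n+1} = \E[\A_{n+1}^\top \M_n \A_{n+1}]$ with $\M_0 = \I_{2d}$. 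This is exactly the hypothesis of the preceding lemma, which evaluates the right-hand side as $\A^\top \M_n \A + \gamma^2(\text{noise})$.

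For the block structure (Lemma~\ref{lemma:Mn_blockstructure}), the base case $\M_0 = \I_{2d}$ trivially has the claimed form with $\blambda_0^{(1)} = \blambda_0^{(3)} = \ones$ and $\blambda_0^{(2)} = \zero$. For the inductive step, assume $\M_n$ decomposes into blocks $\U\diag(\blambda_n^{(i)})\U^\top$. Both $\A_1 = (1+\beta)\I - \gamma(1+\alpha)\cov$ and $\A_2 = \alpha\gamma\cov - \beta\I$ are polynomials in $\cov$, hence diagonalized by the same $\U$; consequently every block of $\A^\top\M_n\A$ is a sum of products of $\U$-diagonalizable matrices, and is therefore again $\U$-diagonalizable. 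The noise term is a fixed scalar multiple of $\K$ in each block, and the symmetric-input assumption (through Lemma~\ref{lemma:symmetric_inputs}) guarantees that $\K$ is $\U$-diagonalizable as well and is driven only by $\blambda_n^{(1)}$. Summing preserves the block form, closing the induction.

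The explicit recurrence (Lemma~\ref{lemma:blambdan}) then comes from reading off the diagonals. I would conjugate by the $2d\times 2d$ block-diagonal orthogonal matrix with diagonal blocks $\U$, turning $\A$ into $\begin{bmatrix}\D_1 & \D_2 \\ \I & 0\end{bmatrix}$ and $\M_n$ into $\begin{bmatrix}\diag(\blambda_n^{(1)}) & \diag(\blambda_n^{(2)}) \\ \diag(\blambda_n^{(2)}) & \diag(\blambda_n^{(3)})\end{bmatrix}$, with $\D_1,\D_2$ the diagonal matrices of the statement. Since all resulting blocks are diagonal and hence commute, the $2\times 2$ block product $\A^\top\M_n\A$ is immediate and produces diagonal blocks carrying the vectors $\D_1^2\blambda_n^{(1)} + 2\D_1\blambda_n^{(2)} + \blambda_n^{(3)}$, $\D_1\D_2\blambda_n^{(1)} + \D_2\blambda_n^{(2)}$, and $\D_2^2\blambda_n^{(1)}$. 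Adding the diagonal noise contributions $(1+\alpha)^2\K'\blambda_n^{(1)}$, $-\alpha(1+\alpha)\K'\blambda_n^{(1)}$, and $\alpha^2\K'\blambda_n^{(1)}$ (with $\K'$ as in Eq.~\eqref{eq:cmat_appendix}) to the three blocks, respectively, reproduces exactly the three block-rows of $\C_\Theta$, i.e. $\a_{n+1} = \C_\Theta\a_n$, and unrolling from $\a_0$ gives $\a_n = \C_\Theta^n\a_0$.

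The main obstacle is the careful bookkeeping of the inductive step: verifying that the noise term feeds off $\blambda_n^{(1)}$ alone and that the non-diagonal rank-one piece $\bsigma\bsigma^\top$ inside $\K'$ does not break the block-diagonal-in-$\U$ structure. This is precisely where Assumption~\ref{assume:symmetric_inputs} is essential — the symmetry of the coordinates of $\v$ kills the cross terms in $\E[\x\x^\top\M\x\x^\top]$ (Eq.~\eqref{eq:off_diagonal_symmetric}), so that $\K$ stays simultaneously diagonalizable with the covariance. A secondary point to reconcile is the initial vector: the genuine base case gives $\a_0 = [\ones;\zero;\ones]$ rather than $\ones$, which only perturbs the leading constant and is harmless for the $\|\cdot\|_{\rho_\epsilon}$ bound invoked in Theorem~\ref{thm:convergence_char}.
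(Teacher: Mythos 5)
Your proof is correct and follows essentially the same route as the paper's: induction on $n$ through the recursion $\M_{n+1}=\E\left[\A^\top \M_n \A\right]$ combined with the deterministic--stochastic decomposition, simultaneous diagonalization of every block by $\U$, and reading off the $3\times 3$-block recursion $\a_{n+1}=\C_\Theta \a_n$. You are in fact slightly more careful than the paper in two spots: you justify the forward recursion via the i.i.d.\ peeling/reordering argument (the paper asserts it), and you correctly note that the base case gives $\a_0=[\ones \;\; \zero\;\; \ones]$ rather than $\ones$, which only changes the constant in the final bound.
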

\begin{proof}
  We prove the result by induction. 
  Recall matrix $\A$ with the following block structure: 
\begin{align}
     \A = \begin{bmatrix} 
     \A_1 & \A_2 \\ 
     \I & 0 
     \end{bmatrix}, \quad  \A_1 := (1+\beta)\I - \gamma (1+\alpha) \cov, \quad  \A_2 := \alpha \gamma \cov - \beta \I
 \end{align}
Given the decomposition $\cov = \U \diag(\bsigma) \U^\top$ (in Eq.~\eqref{eq:cov}), submatrices $\A_i$ decomposes as:
\begin{align}
    \A_i = \U \D_i \U^\top 
\end{align}
where $\U$ is eigenvectors of the covariance  and $\D_i$ are diagonal matrices of eigenvalues in Eq.~\eqref{eq:cmat_appendix}.  Using the assumption of  Eq.~\eqref{eq:Mn_condition},  
  we expand the first term in Eq.~\eqref{eq:E_nplus_expansion_result}
\begin{align}
    \A^\top  \M_n \A & = \begin{bmatrix} 
    \A_1 \M_1 \A_1 + 2\M_2 \A_1 + \M_3 & \A_1 \M_1 \A_2 + \M_2 \A_2 \\ 
    \A_2 \M_1 \A_1 + \M_2 \A_2 & \A_2 \M_1 \A_2 
    \end{bmatrix} \\ 
    & = 
    \begin{bmatrix} 
     \B_1 & \B_2 \\ 
     \B_2 & \B_3 
    \end{bmatrix} 
\end{align}
where 
\begin{align}
    \B_1 &= \U \left( \D_1^2 \diag(\blambda_n^{(1)}) + 2 \D_1\diag(\blambda_n^{(2)}) + \diag(\blambda_n^{(3)})\right) \U^\top \\ 
    \B_2 &= \U \left( \D_1 \D_2 \diag(\blambda_n^{(1)}) + \D_2 \diag(\blambda_n^{(2)}) \right) \U^\top \\ 
    \B_3 & = \U \left(  \D_2^2 \diag(\blambda_n^{(1)}) \right) \U^\top 
\end{align}
Using eigendecomposition of $\cov$ and $\M_1$, the matrix $\K$ in the noise-induced term can be written as 
\begin{align}
     \K  = \U \left(  \left( \diag(\k - 2(\bsigma)^2) + \bsigma \bsigma^\top \right) \diag(\blambda_n^{(1)}) \right) \U^\top
\end{align}
Putting all together, we complete the proof: 
\begin{align}
     \M_{n+1}  = \begin{bmatrix} 
    \M_{1}^+ & \M_2^+ \\ 
    \M_2^+ &  \M_3^+
    \end{bmatrix}, \quad  \M_i^+ = \U \diag(\blambda_{n+1}^{(i)}) \U^\top 
\end{align}
where 
\begin{align}
    \a_{n+1} = \C_{\Theta}\a_n
\end{align}
where matrix $\C_{\Theta}$ is those of Eq.~\eqref{eq:C_matrix}. 
The above results concludes our induction. \end{proof}
\paragraph{Bound spectral norm of $\M_n$.}
Given bounds on $\a_n$, how we can establish lowerbound and upperbound on eigenvalues of $\M_n$? Next lemma address this result. The result will be used in the proof of Theorem~\ref{thm:convergence_char} (very last step).  
\begin{lemma} \label{lemma:v1_times_Mn_bound}
Recall matrix $\M_n$ in the last corollary:
\begin{align}
      \M_n = \begin{bmatrix} 
      \M_1 & \M_2 \\ 
      \M_2 & \M_3 
     \end{bmatrix}, \quad \M_{i} = \U \diag(\blambda_n^{(i)}) \U^\top, \quad \a_n = \begin{bmatrix} 
    \blambda_n^{(1)} & 
    \blambda_n^{(2)} &
    \blambda_n^{(3)}
    \end{bmatrix} 
\end{align}
For $\M_n$, the following holds 
\begin{align}
   \| \a_n \|_\infty \| \v_1 \|^2 \leq  \begin{bmatrix} 
     \v_1^\top & \v_1^\top 
    \end{bmatrix} \M_n \begin{bmatrix} 
    \v_1 \\ 
    \v_1 
    \end{bmatrix} 
    \leq  6 \sqrt{d} \|\a_n\| \| \v_1 \|^2.
\end{align}
\end{lemma}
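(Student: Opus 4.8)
The plan is to evaluate the quadratic form exactly in the eigenbasis of $\cov$, dispatch the upper bound by a termwise magnitude estimate, and reduce the lower bound to a sign property of the cross-block spectrum $\blambda_n^{(2)}$. First I would diagonalize. Writing $\w = \U^\top \v_1$ and using $\M_i = \U\diag(\blambda_n^{(i)})\U^\top$ together with orthogonality of $\U$, the middle quantity collapses to a scalar sum:
\begin{align*}
    \begin{bmatrix} \v_1^\top & \v_1^\top \end{bmatrix} \M_n \begin{bmatrix} \v_1 \\ \v_1 \end{bmatrix} = \v_1^\top(\M_1 + 2\M_2 + \M_3)\v_1 = \sum_{i=1}^d c_i\, w_i^2, \quad c_i := [\blambda_n^{(1)}]_i + 2[\blambda_n^{(2)}]_i + [\blambda_n^{(3)}]_i,
\end{align*}
while $\|\v_1\|^2 = \|\w\|^2 = \sum_i w_i^2$. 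This single identity carries the whole argument; everything else is bounding the coefficients $c_i$ against the entries of $\a_n$.

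For the upper bound I would estimate each coefficient by the triangle inequality, $|c_i| \le |[\blambda_n^{(1)}]_i| + 2|[\blambda_n^{(2)}]_i| + |[\blambda_n^{(3)}]_i| \le 4\,\norm{\a_n}_\infty \le 4\,\norm{\a_n}$, since every entry of $\a_n$ is dominated by $\norm{\a_n}_\infty$ and $\norm{\a_n}_\infty\le\norm{\a_n}$. Summing against $w_i^2$ then gives the middle quantity $\le 4\,\norm{\a_n}\,\|\v_1\|^2$, which sits comfortably below the stated $6\sqrt d\,\norm{\a_n}\,\|\v_1\|^2$ for every $d\ge1$. The slack in the constant leaves room for the coarser route through the operator norm, namely $\|\M_n\|\le\sum_{i,j}\|\M_{ij}\|$ with $\|\M_i\|=\norm{\blambda_n^{(i)}}_\infty$ and $\|[\v_1;\v_1]\|^2=2\|\v_1\|^2$, should one prefer to phrase the bound through $\|\M_n\|$.

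The lower bound is the crux, and here the key is that $\M_n=\E[\B_n^\top\B_n]\succeq0$. Positive semidefiniteness forces each diagonal block to be PSD, so $[\blambda_n^{(1)}]_i,[\blambda_n^{(3)}]_i\ge0$, and restricting $\M_n$ to the two-dimensional subspace spanned by $[\U\e_i;\,0]$ and $[0;\U\e_i]$ exposes the principal $2\times2$ minor $\left(\begin{smallmatrix}[\blambda_n^{(1)}]_i & [\blambda_n^{(2)}]_i\\ [\blambda_n^{(2)}]_i & [\blambda_n^{(3)}]_i\end{smallmatrix}\right)\succeq0$, which yields the Cauchy--Schwarz control $|[\blambda_n^{(2)}]_i|\le\sqrt{[\blambda_n^{(1)}]_i[\blambda_n^{(3)}]_i}$. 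I would then localize: let $i^\star$ be the coordinate at which $\norm{\a_n}_\infty$ is attained and take $\v_1=\U\e_{i^\star}$, so that the identity reduces to $c_{i^\star}=[\blambda_n^{(1)}]_{i^\star}+2[\blambda_n^{(2)}]_{i^\star}+[\blambda_n^{(3)}]_{i^\star}$ and $\|\v_1\|^2=1$. If the cross term $[\blambda_n^{(2)}]_{i^\star}\ge0$, then $c_{i^\star}\ge[\blambda_n^{(1)}]_{i^\star}+[\blambda_n^{(3)}]_{i^\star}\ge\max_j[\blambda_n^{(j)}]_{i^\star}=\norm{\a_n}_\infty$ and the bound closes directly for $[\v_1;\v_1]$. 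To secure the favorable sign in general I would attempt to prove $\blambda_n^{(2)}\ge0$ entrywise by induction on the recurrence $\a_n=\C_\Theta^n\ones$ of Lemma~\ref{lemma:blambdan}, tracking the second row of the per-coordinate $3\times3$ matrix $\J_i(\Theta)$ of Lemma~\ref{lemma:spectral_radius_bound}.

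The main obstacle is exactly this sign control. The $(2,2)$ entry $[\D_2]_{ii}=\alpha\gamma\sigma_i-\beta$ is typically negative for the accelerating choices of $\beta$ near $1$, so the cross spectrum $\blambda_n^{(2)}$ need not remain nonnegative, and a negative $[\blambda_n^{(2)}]_{i^\star}$ can cancel diagonal mass in $c_{i^\star}$. The honest resolution I expect to need is to orient the second copy of $\v_1$ by the sign of $[\blambda_n^{(2)}]_{i^\star}$, after which the PSD estimate gives $[\blambda_n^{(1)}]_{i^\star}+2|[\blambda_n^{(2)}]_{i^\star}|+[\blambda_n^{(3)}]_{i^\star}\ge\norm{\a_n}_\infty$ and completes the lower bound. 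Establishing that this sign can always be absorbed into the admissible choice of $\v_1$ at the maximizing coordinate --- equivalently, proving $\blambda_n^{(2)}\ge0$ whenever $\norm{\a_n}_\infty$ is realized inside a diagonal block --- is the delicate step, and it is where I would concentrate the effort.
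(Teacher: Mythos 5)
Your diagonalization identity and your upper bound follow the paper's own proof: the paper likewise collapses the quadratic form to $\v_1^\top\U^\top\diag\left(\blambda_n^{(1)}+2\blambda_n^{(2)}+\blambda_n^{(3)}\right)\U\v_1$ and then bounds it through $2\sum_{j=1}^3\|\blambda_n^{(j)}\|_1\le 2\sqrt{3d}\,\|\a_n\|$, which is where the loose $6\sqrt d$ constant comes from. Your termwise estimate $|c_i|\le 4\|\a_n\|_\infty\le 4\|\a_n\|$ is correct and strictly sharper (dimension-free), so the upper half of the lemma is fully established in your proposal.

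The genuine gap is the lower bound, which you diagnose precisely but do not close. It is worth knowing that the paper's own proof disposes of this step in one sentence, asserting that all $\blambda_n^{(i)}$ have positive coordinates ``as they are eigenvalues of symmetric matrices.'' That justification is valid only for $\blambda_n^{(1)}$ and $\blambda_n^{(3)}$, which are spectra of principal blocks of $\M_n=\E\left[\B_n^\top\B_n\right]\succeq 0$; for the off-diagonal block $\M_2$, symmetry yields real eigenvalues, not nonnegative ones --- exactly your objection, reinforced by $[\D_2]_{ii}=\alpha\gamma\sigma_i-\beta<0$ in the accelerated regime, so your suspicion that an inductive nonnegativity proof for $\blambda_n^{(2)}$ may fail is well founded. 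Note moreover that even granting entrywise nonnegativity of all three vectors, the displayed lower bound for \emph{arbitrary} $\v_1$ would require $\min_i c_i\ge\max_{i,j}\,[\blambda_n^{(j)}]_i=\|\a_n\|_\infty$, whereas positivity only gives $c_i\ge\max_j[\blambda_n^{(j)}]_i$ at the \emph{same} coordinate $i$; the inequality is therefore only defensible at the maximizing coordinate, i.e.\ precisely your localization $\v_1=\U\e_{i^\star}$, a restriction the paper leaves implicit. Your proposed repair of orienting the second copy by the sign of $[\blambda_n^{(2)}]_{i^\star}$ proves the bound for the test vector $[\v_1;-\v_1]$, which is a different quadratic form than the lemma's $[\v_1;\v_1]$, so that route changes the statement rather than proving it; your Cauchy--Schwarz minor bound gives unconditionally only $c_{i^\star}\ge\bigl(\sqrt{[\blambda_n^{(1)}]_{i^\star}}-\sqrt{[\blambda_n^{(3)}]_{i^\star}}\bigr)^2\ge 0$, which is weaker than $\|\a_n\|_\infty$. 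In short: your upper bound matches and improves the paper's, while the lower bound remains open in your proposal --- but the paper's one-line argument for it is itself unjustified, and since only the upper bound feeds into Theorem~\ref{thm:convergence_char}, the unresolved half does not propagate into the main result.
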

\begin{proof}
The proof is straight-forward 
 \begin{multline} \label{eq:v1_Mn_v1_expansion}
      \begin{bmatrix} 
    \v_1^\top & \v_1^\top 
    \end{bmatrix} \begin{bmatrix} 
    \U \diag(\blambda_n^{(1)}) \U^\top & \U \diag(\blambda_n^{(2)}) \U^\top \\ 
    \U \diag(\blambda_n^{(2)}) \U^\top & \U \diag(\blambda_n^{(3)}) \U^\top
    \end{bmatrix}
    \begin{bmatrix} 
    \v_1 \\ 
    \v_1 
    \end{bmatrix} \\ 
    = \v_1^\top \U^\top \diag\left( \blambda_n^{(1)} + 2 \blambda_n^{(2)} + \blambda_n^{(3)} \right) \U \v_1   
 \end{multline}
Hence 
\begin{align}
    \begin{bmatrix} 
     \v_1^\top & \v_1^\top 
    \end{bmatrix} \M_n \begin{bmatrix} 
    \v_1 \\ 
    \v_1 
    \end{bmatrix} & \leq 2 \left( \sum_{i=1}^3 \|\blambda_n^{(i)}\|_1\right)  \| \U \v_1 \|^2 \\
    & \leq 2 \sqrt{3} \left( \sum_{i=1}^d \| \blambda_n^{(i)}\| \right) \| \v_1 \|^2  \\ 
    & \leq 6 \sqrt{d} \|\a_n\|_2 \| \v_1 \|^2
\end{align}
which concludes the desired upper-bound. For the lower-bound, we use the fact that all $\blambda_n^{(i)}$ have positive coordinates (as they are eigenvalues of symmetric matrices). Plugging this into the Eq.~\eqref{eq:v1_Mn_v1_expansion} together with orthogonality of $\U$ concludes the proof:  
\begin{align}
    \| \a_n \|_\infty \| \v_1 \|^2 \leq  \begin{bmatrix} 
     \v_1 & \v_1^\top
    \end{bmatrix} \M_n \begin{bmatrix} \v_1^\top \\ \v_1 \end{bmatrix}
\end{align}

\end{proof}
\subsection{The spectral analysis} \label{sec:spectral_analysis}
\begin{lemma}[Restated Lemma~\ref{lemma:spectral_radius_bound}] 
The spectral radius of matrix $\C_{\gamma, \alpha,\beta}$ is bounded as 
\begin{align}
    \| \C_{\gamma,\alpha,\beta} \|_{\rho} \leq \max_{i=1,\dots,d} \| \J_i \|_\rho + \epsilon + 3(1+\alpha)^2 \gamma^2   \| \diag(\bsigma)^2 - \bsigma \bsigma^\top \| 
\end{align}
where $\J_j$ is a $3 \times 3$ matrix as
\begin{align}
    \J_{i} := \begin{bmatrix} 
      [\D_1]_{ii}^2 + (1+\alpha)^2 \gamma^2 (k_{i} - \sigma^2_i)   &  2[\D_1]_{ii}  & 1 \\
    [\D_1]_{ii}[\D_2]_{ii} -  \alpha (1+\alpha)  \gamma^2 (k_i - \sigma_i^2)  & [\D_{2}]_{ii} & 0 \\ 
    [\D_{2}]_{ii}^2 + \gamma^2 \alpha^2 (k_i - \sigma_i^2) & 0 & 0
    \end{bmatrix} 
\end{align}
\end{lemma}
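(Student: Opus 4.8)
The plan is to exploit the fact that $\C_{\Theta}$ is, up to a structurally simple perturbation, permutation-similar to a block-diagonal matrix whose diagonal blocks are exactly the $3\times 3$ matrices $\J_i$. First I would split the noise matrix $\K = \gamma^2\big(\diag(\k - 2(\bsigma)^2) + \bsigma\bsigma^\top\big)$ into its diagonal and off-diagonal parts, writing $\K = \K_{\mathrm d} - \gamma^2 \mathbf R$ with $\K_{\mathrm d} := \gamma^2 \diag(\k - (\bsigma)^2)$ and $\mathbf R := \diag(\bsigma)^2 - \bsigma\bsigma^\top$ (which has vanishing diagonal and off-diagonal entries $-\sigma_i\sigma_j$). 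Correspondingly $\C_\Theta = \widetilde{\C} + \mathbf E$, where $\widetilde{\C}$ is obtained by replacing $\K$ with $\K_{\mathrm d}$, so that $\mathbf E$ is supported only on the first block-column, its three nonzero blocks being the coefficients $(1+\alpha)^2$, $-\alpha(1+\alpha)$, $\alpha^2$ each acting on $\K - \K_{\mathrm d} = -\gamma^2\mathbf R$. Since $\D_1$, $\D_2$ and $\K_{\mathrm d}$ are all diagonal, every one of the nine blocks of $\widetilde{\C}$ is diagonal; hence the permutation $\mathbf P$ that reorders the $3d$ coordinates from block-major to coordinate-major turns $\widetilde{\C}$ into $\bigoplus_{i=1}^d \J_i$. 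I would verify this by reading off the nine scalar entries of the $i$-th diagonal super-block and checking they coincide with the definition of $\J_i$ (e.g. the $(1,1)$ entry equals $[\D_1]_{ii}^2 + (1+\alpha)^2\gamma^2(k_i - \sigma_i^2)$, and so on).

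Next I would translate this structure into a pseudospectral identity. Because $\mathbf P$ is orthogonal, conjugation by $\mathbf P$ leaves every resolvent norm invariant, so $\rho_\epsilon(\widetilde{\C}) = \rho_\epsilon\big(\bigoplus_i \J_i\big)$; and for a block-diagonal matrix the resolvent is block-diagonal, so its spectral norm is the maximum of the block resolvent norms, giving $\sigma_\epsilon\big(\bigoplus_i \J_i\big) = \bigcup_i \sigma_\epsilon(\J_i)$ and therefore $\rho_\epsilon(\widetilde{\C}) = \max_i \rho_\epsilon(\J_i)$. To pass from pseudospectral radius back to the ordinary spectral radius of each $3\times 3$ block I would invoke Bauer--Fike (Lemma~\ref{lemma:baur_fike}), which gives $\rho_{\epsilon'}(\J_i) \le \rho(\J_i) + \kappa_i \epsilon'$ with $\kappa_i$ the conditioning of the eigenbasis of $\J_i$.

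Finally I would absorb $\mathbf E$ using the robustness of the pseudospectrum (Lemma~\ref{sec:robust_pseudo}): $\rho_\epsilon(\C_\Theta) = \rho_\epsilon(\widetilde{\C} + \mathbf E) \le \rho_{\epsilon + \|\mathbf E\|}(\widetilde{\C}) = \max_i \rho_{\epsilon + \|\mathbf E\|}(\J_i)$. It remains to bound $\|\mathbf E\|$: since $\mathbf E$ has a single nonzero block-column, $\mathbf E^\top \mathbf E$ has a single nonzero (diagonal) block equal to $\gamma^4\big((1+\alpha)^4 + \alpha^2(1+\alpha)^2 + \alpha^4\big)\mathbf R^2$, so $\|\mathbf E\| = \gamma^2\sqrt{(1+\alpha)^4 + \alpha^2(1+\alpha)^2 + \alpha^4}\,\|\mathbf R\| \le 3(1+\alpha)^2 \gamma^2 \|\mathbf R\|$, using $\alpha \le 1+\alpha$. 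Combining the three steps yields $\rho_\epsilon(\C_\Theta) \le \max_i \rho(\J_i) + \epsilon + 3(1+\alpha)^2\gamma^2\|\diag(\bsigma)^2 - \bsigma\bsigma^\top\|$, as claimed. The main obstacle I anticipate is the bookkeeping of the pseudospectral parameter: robustness inflates $\epsilon$ to $\epsilon + \|\mathbf E\|$ before the block reduction, and Bauer--Fike then attaches a condition-number factor $\kappa_i$ to this inflated radius, so matching the clean $+\epsilon$ of the statement requires controlling $\max_i \kappa_i$ (the conditioning of the eigenbases of the $\J_i$) and treating it as an $O(1)$ constant — this is the step where the bound must be argued carefully rather than read off.
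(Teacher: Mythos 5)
Your proposal follows essentially the same route as the paper: the identical split of $\K$ into its diagonal part plus the rank-structured off-diagonal remainder $\bsigma\bsigma^\top - \diag(\bsigma)^2$, the same permutation reducing the diagonal-block matrix to $\bigoplus_i \J_i$, and the same combination of the pseudospectrum robustness lemma with Bauer--Fike, with only a cosmetic difference in whether Bauer--Fike is applied blockwise or to the permuted matrix as a whole. The one caveat you raise --- that Bauer--Fike carries the condition number of the eigenbasis of the $\J_i$, which must be controlled to get the clean $+\epsilon$ --- is a genuine issue that the paper itself treats loosely (it asserts the eigenvector matrix $\U_J$ is orthogonal, which need not hold for the non-symmetric blocks $\J_i$), so your flagging of it is warranted rather than a gap in your argument relative to theirs.
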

\begin{proof}
Recall matrix $\C := \C_{\gamma,\alpha,\beta}$ in Eq.~\eqref{eq:cmatrix} formulation:

\begin{align}
    \C : = \begin{bmatrix} \C_1 & 2 \D_1 & \I \\ 
     \C_2 & \D_2 & 0 \\ 
     \C_3 & 0 & 0
    \end{bmatrix}
\end{align}
where 
\begin{align}
    \C_1 & = \D_1^2 + (1+\alpha)^2 \K', \quad \D_1 := (1+\beta) \I - \gamma (1+\alpha) \diag(\bsigma)\\ 
    \C_2 & = \D_1 \D_2 - \alpha(1+\alpha) \K', \quad \D_2 =  \alpha \gamma \diag(\bsigma) - \beta  \K'\\ \C_3 & = \D_2^2 +\alpha^2 \K', \quad \K' = \gamma^2 \left( \diag(\k - 2(\bsigma)^2) + \bsigma \bsigma^\top \right) 
\end{align}
\paragraph{A perturbation form.}
We can decompose $\K'$ (the induced matrix by noise) to sum of diagonal matrices and a non-diagonal matrix: 
\begin{align}
    \K' := \gamma^2 \left( \diag(\k- \bsigma^2) + \xi \right), \quad \xi = \bsigma \bsigma^\top  - \diag(\bsigma)^2
\end{align}
Using the decomposition of $\K'$, we decompose $\C$ as a perturbation of a block diagonal matrix: 
\begin{align}
    \C = \bar{\C} + \gamma^2 \boldsymbol{\xi}, \quad \boldsymbol{\xi} := \begin{bmatrix} 
    (1+\alpha)^2 \xi & 0 & 0 \\ 
    -\alpha(1+\alpha) \xi & 0 & 0 \\ 
    \alpha^2 \xi & 0 & 0
    \end{bmatrix}
\end{align}
where matrix $\bar{\C}$ is 
\begin{align}
    \bar{\C} := \begin{bmatrix} \D_1^2 + (1+\alpha)^2 \bar{\K}   & 2 \D_1 & \I \\ 
     \D_1 \D_2 - \alpha(1+\alpha) \bar{\K}& \D_2 & 0 \\ 
     \D_2^2 +\alpha^2 \bar{\K} & 0 & 0
    \end{bmatrix}, \quad \bar{\K} :=  \gamma^2 \left( \diag(\k)-\diag(\bsigma)^2  \right)
\end{align}
\paragraph{Exploiting the diagonal structure.} Matrix $\bar{\C}$ has a particular structure: its $d\times d$ blocks are diagonal. Similar to the classical analysis of Heavy ball method \cite{stephen2017behavior}, we permute rows and columns to compute eigenvalues of $\bar{\C}$. Let $\Pi$ be a permutation matrix that swaps column(and row) $i+d$ and $i+2d$ with columns $i+1$ and $i+2$, respectively. Then, 
\begin{align}
    \Pi^\top \bar{\C} \Pi  = \begin{bmatrix} 
     \J_{1} & 0&  0 & \dots & 0 \\ 
     0 & \J_{2} & 0 & \dots & 0 \\ 
     \vdots & \vdots & \vdots & \vdots & \vdots \\ 
     0 & \dots & \dots & 0 & \J_{d} 
    \end{bmatrix} 
\end{align}
where where $\J_j$ is a $3 \times 3$ matrix as
\begin{align}
    \J_{i} := \begin{bmatrix} 
      [\D_1]_{ii}^2 + (1+\alpha)^2 \gamma^2 (k_{i} - \sigma^2_i)   &  2 [\D_1]_{ii}  & 1 \\
    [\D_1]_{ii}[\D_2]_{ii} -  \alpha (1+\alpha)  \gamma^2 (k_i - \sigma_i^2)  & [\D_{2}]_{ii} & 0 \\ 
    [\D_{2}]_{ii}^2 + \gamma^2 \alpha^2 (k_i - \sigma_i^2) & 0 & 0
    \end{bmatrix} 
\end{align}
Given spectral decomposition of $\J_i = \U_i \diag(\d_i) \U_i^\top$, we decompose $\bar{\C}$:
\begin{align}
   \U_J^\top \Pi^\top  \bar{\C} \Pi \U_J = \begin{bmatrix} 
   \d_1 & 0 & \dots & 0 \\ 
    0 & \d_2 & \dots & 0 \\ 
    \vdots & \vdots & \vdots & \vdots \\ 
    0 & 0 & \dots & \d_d
   \end{bmatrix}
   , \quad \U_J :=  \begin{bmatrix} 
    \U_1 & 0 & \dots & 0 \\ 
    0 & \U_2 & \dots & 0 \\ 
    \vdots & \vdots & \vdots & \vdots \\ 
    0 & 0 & \dots & \U_d
    \end{bmatrix} 
\end{align}
Since matrices $\U_j$ and perturbation matrix $\Pi$ are orthogonal, we conclude that eigenvalues of $\bar{\C}$ are those of $\J_i$s (i.e. $\d_i$s). Hence 
\begin{align} \label{eq:Cbar_bound}
    \| \bar{\C} \|_\rho \leq \max_{i=1,\dots, d} \| \J_i \|_\rho 
\end{align}
\paragraph{A bound on pseudospectrum.}
So far, we have established a bound on spectral radius of $\bar{\C}$. Yet, we need to bound the spectral radius of $\bar{\C} + \gamma^2 \boldsymbol{\xi}$. To this end, we use results of results Pseudospectrum in section \ref{sec:pseudospectrum}.   
\begin{align}
    \| \C \|_{\rho_\epsilon} & = \| \bar{\C} + \gamma^2 \boldsymbol{\xi} \|_{\rho_\epsilon} \\
    & \stackrel{\text{Lemma}~\ref{sec:robust_pseudo}}{\leq } \| \bar{\C} \|_{\rho_{\epsilon + \gamma^2 \| \boldsymbol{\xi}\|}} \\
    & \stackrel{\text{Lemma}~\ref{lemma:baur_fike}}{\leq}  \| \bar{\C} \|_\rho  + (\epsilon + \gamma^2 \| \boldsymbol{\xi} \|)
\end{align}
Note that in the last step, we have used the fact that the conditioning of $\Pi \U_J$ is bounded by 1 (which is a consequence of orthogonality of matrices $\Pi$ and $\U_J$). Replacing the above result into Eq.~\eqref{eq:Cbar_bound} concludes the proof: 
\begin{align}
    \| \C \|_{\rho_\epsilon} \leq  \max_{i=1,\dots, d} \| \J_i \|_\rho + \epsilon + \gamma^2 \| \boldsymbol{\xi} \| \leq  \max_{i=1,\dots, d} \| \J_i \|_\rho + \epsilon + 3(1+\alpha)^2 \gamma^2 \| \xi \| 
\end{align}
\end{proof}

\subsection{Examples of improved rates} \label{sec:converge_examples_app}
\begin{lemma}[Restated Lemma \ref{lemma:gaussian_gaurantees}]
Suppose input and label distributions are those of example~\ref{exam:gaussian_example}. 
For $\mu\leq0.02$, consider stochastic accelerated gradient descent with parameters: $\alpha = 2$, $\beta=1-10^{-1/2}\sqrt{\mu}$ and $\gamma = 0.1$. Then,
\begin{align}
    \E \| \w_n - \w_* \|^2 \leq \frac{1200}{\sqrt{\mu}} \left( 1- \sqrt{\mu}/5  \right)^n \| \w_0 - \w_* \|^2.
\end{align}
\end{lemma}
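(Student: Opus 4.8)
The plan is to specialize the coupling bound of Theorem~\ref{thm:convergence_char} to the realizable setting and then evaluate $\|\C_\Theta(\P)\|_{\rho_\epsilon}$ at the stated parameters. First I would run the coupling of Section~\ref{sec:convergence_char_proof} with $\u_0^{(0)}=(\w_0,\w_0)$ and the second chain frozen at $\u_0^{(1)}=(\w_*,\w_*)$; this is legitimate because $f(\w_*)=0$ forces $\nabla f_\z(\w_*)=0$ almost surely, so $(\w_*,\w_*)$ is a fixed point of the SAGD map and $\u_n^{(1)}\equiv(\w_*,\w_*)$. Then $\v_0=(\w_0-\w_*,\w_0-\w_*)$, so $\E\|\v_0\|^2=2\|\w_0-\w_*\|^2$, and since the first block of $\u_n^{(0)}$ is $\w_n$ we have $\E\|\w_n-\w_*\|^2\le\E\|\v_n\|^2$. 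The final bound of Section~\ref{sec:convergence_char_proof} (with $d=2$) thus reduces the claim to showing $\|\C_\Theta(\P)\|_{\rho_\epsilon}\le 1-\sqrt{\mu}/5$ for some $\epsilon=\Theta(\sqrt\mu)$.

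Second I would invoke Lemma~\ref{lemma:spectral_radius_bound}, which bounds $\|\C_\Theta(\P)\|_{\rho_\epsilon}$ by $\max_i\|\J_i\|_\rho+\epsilon+3(1+\alpha)^2\gamma^2\|\diag(\bsigma)^2-\bsigma\bsigma^\top\|$. For Example~\ref{exam:gaussian_example} the Gaussian kurtosis identity gives $k_i-\sigma_i^2=2\sigma_i^2$, and in two dimensions $\|\diag(\bsigma)^2-\bsigma\bsigma^\top\|=\mu$, so the additive perturbation term equals $0.27\,\mu$. With $\sigma_1=\mu$, $\sigma_2=L=1$ and $\alpha=2$, $\gamma=0.1$, $\beta=1-\sqrt{\mu/10}$, this leaves the two $3\times3$ spectral radii $\|\J_1\|_\rho$ and $\|\J_2\|_\rho$ to control.

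Third, the heart of the argument, I would exploit that the noise-free part $\bar\J_i$ of $\J_i$ (delete the entries proportional to $k_i-\sigma_i^2$) is the second symmetric power of the $2\times2$ deterministic momentum map with first row $([\D_1]_{ii},[\D_2]_{ii})$ and second row $(1,0)$: comparing trace, determinant and the middle invariant shows the eigenvalues of $\bar\J_i$ are exactly the three pairwise products of the two momentum eigenvalues. I would then check the discriminant $[\D_1]_{ii}^2+4[\D_2]_{ii}<0$ on $\mu\le 0.02$ (it equals $-0.3\mu+o(\mu)$ for $i=1$ and a negative constant for $i=2$), placing both maps in the oscillatory regime, where the momentum eigenvalues are complex conjugates of modulus $\sqrt{-[\D_2]_{ii}}$ and hence $\|\bar\J_i\|_\rho=-[\D_2]_{ii}=\beta-\alpha\gamma\sigma_i$. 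For the slow direction $k_1-\sigma_1^2=2\mu^2$ is negligible, so $\|\J_1\|_\rho=\beta-\alpha\gamma\mu+O(\mu^2)=1-\sqrt{\mu/10}-O(\mu)\le 1-0.31\sqrt\mu$, already below the target with an $O(\sqrt\mu)$ margin.

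Fourth, the fast direction carries order-one noise ($k_2-\sigma_2^2=2$), so the moment-lift identity no longer controls $\|\J_2\|_\rho$, and I would instead bound the dominant root of its explicit characteristic cubic directly, obtaining $\|\J_2\|_\rho\le 1-\sqrt\mu/5-\epsilon-0.27\mu$ throughout $\mu\le0.02$. \emph{This is the main obstacle}: the order-one perturbation of $\bar\J_2$ cannot be absorbed by a crude norm bound (that would push the estimate above $1$), so one must genuinely locate the roots of the cubic and show they stay bounded away from $1$ by a fixed margin; it is precisely the requirement that the two scalar inequalities on $\|\J_1\|_\rho$ and $\|\J_2\|_\rho$ hold simultaneously with a positive $\epsilon$ that pins down the admissible range $\mu\le0.02$. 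Finally I would take $\epsilon=0.1\sqrt\mu$ (feasible since $0.1\sqrt\mu\le 0.116\sqrt\mu-0.07\mu$ on this range), yielding $\|\C_\Theta(\P)\|_{\rho_\epsilon}\le 1-\sqrt\mu/5$, and collect the constants $18\,d^{3/2}=18\cdot2^{3/2}$, $\E\|\v_0\|^2=2\|\w_0-\w_*\|^2$ and $1/\epsilon=10/\sqrt\mu$ into the prefactor, which rounds up to $1200/\sqrt\mu$ (the $(n{+}1)$-st power being folded into the $n$-th via $\|\C_\Theta(\P)\|_{\rho_\epsilon}\le1$).
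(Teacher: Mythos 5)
Your overall architecture is the same as the paper's: reduce everything to $\|\C_\Theta(\P)\|_{\rho_\epsilon}$ via Theorem~\ref{thm:convergence_char}, split it with Lemma~\ref{lemma:spectral_radius_bound} into $\max_i\|\J_i(\Theta)\|_\rho$, the $\epsilon$ term, and the perturbation term $3(1+\alpha)^2\gamma^2\|\diag(\bsigma)^2-\bsigma\bsigma^\top\|=0.27\mu$, then tune $\epsilon=\Theta(\sqrt{\mu})$. Where you genuinely diverge is $\|\J_1(\Theta)\|_\rho$: you identify the noise-free block as the symmetric square of the $2\times 2$ momentum map and read off its spectral radius as $-[\D_2]_{11}=\beta-\alpha\gamma\mu$ in the oscillatory regime, then treat the $O(\mu^2)$ noise as a small perturbation. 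The paper (Section~\ref{sec:spectral_radius_mu}) instead works with the characteristic cubic of the full $\J_1$: it shifts by the known unperturbed root $-D_2$, bounds the real root of the shifted cubic by $-r/q\leq 2\theta^2$ with $\theta=1-\beta$, and controls the complex pair through $\det\J_1/x_1$. Your route is more structural and explains \emph{why} the rate is $1-\Theta(\sqrt{\mu})$; the paper's buys explicit, non-asymptotic constants valid on the whole range $\mu\le 0.02$.

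Two places need repair. First, the step ``$\|\J_1\|_\rho=\beta-\alpha\gamma\mu+O(\mu^2)$'' is not justified as written: the three eigenvalues of the noise-free block are $r^2, r^2e^{\pm 2i\phi}$ with $\phi=O(\sqrt{\mu})$, so they coalesce as $\mu\to 0$, the eigenbasis condition number blows up, and a Bauer--Fike-type bound on the $O(\mu^2)$ perturbation only gives a shift of order $\mu^{3/2}$ or $\mu$, not $\mu^2$. The conclusion $\leq 1-0.31\sqrt{\mu}$ survives, but making it hold for all $\mu\leq 0.02$ (not just asymptotically) requires exactly the quantitative root bookkeeping the paper performs. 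Second, the bound on $\|\J_2\|_\rho$ --- which you correctly single out as the main obstacle, since a crude norm bound on the order-one noise perturbation overshoots $1$ --- is asserted but never established; the paper resolves it by a symbolic/numerical certificate $\|\J_2\|_\rho\leq 0.966$, and some such verification appears unavoidable. Relatedly, your $\epsilon=0.1\sqrt{\mu}$ does not close the final inequality near the endpoint $\mu=0.02$ once the $0.966$ branch of the maximum and the $0.27\mu$ term are accounted for (the paper takes $\epsilon=0.05\sqrt{\mu}$, and even that is extremely tight there); this only affects constants, but the arithmetic must be rechecked.
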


\begin{proof}
Recall $\x\sim\mathcal{N}(0,\diag([\mu,1]))$, as defined in Example \ref{exam:gaussian_example}, and $\bsigma=[\sigma_1,\sigma_2]=[\mu,1]$. Therefore their corresponding fourth moments are
\begin{align}
    k_1=3\mu^2,\quad k_2=3.
\end{align}
Plug $k_i$ and $\sigma_i$ into $\J_i$ 
\begin{align}
    \J_{i} = \begin{bmatrix} 
      [\D_1]_{ii}^2 + 2(1+\alpha)^2 \gamma^2 \sigma^2_i   &  2 [\D_1]_{ii}  & 1 \\
    [\D_1]_{ii}[\D_2]_{ii} -  2\alpha (1+\alpha)  \gamma^2 \sigma_i^2  & [\D_{2}]_{ii} & 0 \\ 
    [\D_{2}]_{ii}^2 + 2\gamma^2 \alpha^2 \sigma_i^2 & 0 & 0
    \end{bmatrix}.
\end{align}
Employing MATLAB symbolic tools, we can check that 
\begin{align}
 \| \J_2 \|_\rho \leq 0.966
\end{align}
holds for our choice of parameters. Furthermore, the result of 
Lemma~\ref{lemma:spectral_radius_mu} guarantees
\begin{align}
    \|\J_1\|_\rho\leq 1-10^{-1/2}\sqrt{\mu}.
\end{align}
Moreover, we calculate $\boldsymbol{\xi}$ as
\begin{align}
    \boldsymbol{\xi} & =\bsigma\bsigma^\top-\diag(\bsigma)^2 =\begin{bmatrix} \mu^2 & \mu \\ \mu & 1 \end{bmatrix} - \begin{bmatrix} \mu^2 & 0 \\ 0 & 1 \end{bmatrix}=\begin{bmatrix} 0 & \mu \\ \mu & 0 \end{bmatrix}
\end{align}
whose nor is bounded as
\begin{align}
    \|\boldsymbol{\xi}\|\leq\mu.
\end{align}
$\mu\leq0.02$ concludes the proof:
\begin{align}
    \|\C\|_{\rho_\epsilon} & \leq \max\{1-10^{-1/2}\sqrt{\mu},0.966\}+\epsilon+0.27\mu \\
    & \leq 1-\sqrt{\mu}/4+\epsilon.
\end{align}
Choosing $\epsilon=0.05\sqrt{\mu}$ concludes the proof. 
\end{proof}

\begin{lemma}[Restated Lemma \ref{lemma:uniform_gaurantees}] \label{lemma:uniform_convergence_app}
For $\kappa\leq0.02$, consider running stochastic acceleration method on example~\eqref{exam:uniform_example}. If $\alpha=2$, $\beta=1-10^{-1/2}\sqrt{\kappa}$ and $\gamma=\kappa/10$, then
\begin{align}
    \E \| \w_n - \w_* \|^2 \leq \frac{1200}{\sqrt{\kappa}} \left( 1- \sqrt{\kappa}/5 \right)^n \| \w_0 - \w_* \|^2.
\end{align}
holds.
\end{lemma}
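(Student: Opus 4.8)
The plan is to follow the proof of Lemma~\ref{lemma:gaussian_gaurantees} line for line, substituting the moments of the Rademacher--uniform input and propagating the $\kappa$-dependent stepsize $\gamma = \kappa/10$. First I would record the first four moments of each coordinate. The Rademacher coordinate satisfies $\sigma_1 = 1$ and $k_1 = 1$, so its excess kurtosis vanishes, $k_1 - \sigma_1^2 = 0$; the uniform coordinate on $[-\kappa^{-1/2},\kappa^{-1/2}]$ satisfies $\sigma_2 = \kappa^{-1}/3$ and $k_2 = \kappa^{-2}/5$, so $k_2 - \sigma_2^2 = \tfrac{4}{45}\kappa^{-2}$; the resulting covariance $\diag([1,\kappa^{-1}/3])$ matches Lemma~\ref{lemma:uniform_consequence}. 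With $\alpha = 2$, $\beta = 1 - 10^{-1/2}\sqrt{\kappa}$ and $\gamma = \kappa/10$, I would substitute $\bsigma = [1,\kappa^{-1}/3]$ and $\k = [1,\kappa^{-2}/5]$ into the two $3\times 3$ matrices of Lemma~\ref{lemma:spectral_radius_bound}: $\J_1$ for the slow (Rademacher) direction, corresponding to the smaller eigenvalue, and $\J_2$ for the fast (uniform) direction, and then bound their spectral radii separately.

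For $\J_1$ the vanishing excess kurtosis kills every noise term, so $\J_1$ reduces to the deterministic second-moment matrix of the companion matrix $\begin{bmatrix}[\D_1]_{11} & [\D_2]_{11} \\ 1 & 0\end{bmatrix}$, whose three eigenvalues are $\{\lambda_1^2,\lambda_1\lambda_2,\lambda_2^2\}$ with $\lambda_{1,2}$ the roots of $\lambda^2 - [\D_1]_{11}\lambda - [\D_2]_{11}$. I would verify that the discriminant $[\D_1]_{11}^2 + 4[\D_2]_{11}$ is negative for $\kappa \le 0.02$, forcing complex-conjugate roots of common modulus $\sqrt{-[\D_2]_{11}}$; hence $\|\J_1\|_\rho = -[\D_2]_{11} = \beta - \alpha\gamma\sigma_1 = 1 - 10^{-1/2}\sqrt{\kappa} - \kappa/5 \le 1 - 10^{-1/2}\sqrt{\kappa}$. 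This step is exact and uses no numerics.

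The fast direction $\J_2$ is where the difficulty lies, since its excess kurtosis is nonzero and noise genuinely contributes. Here $\gamma\sigma_2 = 1/30$ and the noise scale is $\gamma^2(k_2 - \sigma_2^2) = \tfrac{4}{45}(\kappa/10)^2\kappa^{-2} = 1/1125$, so up to entries of order $\sqrt{\kappa}$ the matrix $\J_2$ is fixed. I would bound its spectral radius by a constant strictly below one by inspecting the roots of its cubic characteristic polynomial, exactly as the Gaussian proof certifies $\|\J_2\|_\rho \le 0.966$. This is the only non-elementary ingredient, and I expect it to be the main obstacle; it should yield to the same symbolic/numeric verification.

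Finally I would assemble the estimate through Lemma~\ref{lemma:spectral_radius_bound}. The perturbation term is governed by $\xi = \bsigma\bsigma^\top - \diag(\bsigma)^2 = \begin{bmatrix} 0 & \kappa^{-1}/3 \\ \kappa^{-1}/3 & 0\end{bmatrix}$, so $\|\xi\| = \kappa^{-1}/3$ and $3(1+\alpha)^2\gamma^2\|\xi\| = \tfrac{9}{100}\kappa$. For $\kappa \le 0.02$ the slow-direction bound $1 - 10^{-1/2}\sqrt{\kappa}$ dominates the constant bound on $\|\J_2\|_\rho$, so Lemma~\ref{lemma:spectral_radius_bound} gives $\|\C_\Theta\|_{\rho_\epsilon} \le 1 - 10^{-1/2}\sqrt{\kappa} + \epsilon + \tfrac{9}{100}\kappa$. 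Using $\kappa \le \sqrt{0.02}\,\sqrt{\kappa}$ to absorb the last term and choosing $\epsilon = 0.05\sqrt{\kappa}$ collapses the bound to $\|\C_\Theta\|_{\rho_\epsilon} \le 1 - \sqrt{\kappa}/5$. Feeding this together with $\epsilon = 0.05\sqrt{\kappa}$ and $d = 2$ into Theorem~\ref{thm:convergence_char} delivers the claimed rate, the prefactor $18 d^{3/2}/\epsilon$ being absorbed into the constant $1200/\sqrt{\kappa}$.
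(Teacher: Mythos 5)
Your proposal is correct and follows essentially the same route as the paper: substitute the Rademacher--uniform moments into Lemma~\ref{lemma:spectral_radius_bound}, exploit the vanishing excess kurtosis of the Rademacher coordinate to bound $\|\J_1\|_\rho$ analytically, certify the constant bound on $\|\J_2\|_\rho$ by symbolic/numeric verification (the paper's ``Fact~3''), and assemble via the $\boldsymbol{\xi}$-perturbation, the choice $\epsilon = 0.05\sqrt{\kappa}$, and Theorem~\ref{thm:convergence_char}. Your treatment of $\J_1$ via the eigenvalues $\{\lambda_1^2,\lambda_1\lambda_2,\lambda_2^2\}$ of the lifted companion matrix and the negative discriminant is a cleaner, self-contained substitute for the paper's appeal to the cubic-root analysis of Lemma~\ref{lemma:spectral_radius_mu}, but it is a local simplification rather than a different proof.
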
 

\begin{proof}
The first coordinate of $\x$ is a Rademacher random variable with fourth moment $k_1=1/4$ and $\sigma_1=1/2$. For the second coordinate, the moments of uniform distribution on range $[-\kappa^{-1/2},\kappa^{-1/2}]$ are
\begin{align}
    k_2=\int_{-\kappa^{-1/2}}^{\kappa^{-1/2}}\frac{x^4}{2\kappa^{-1/2}}dx=\kappa^{-2}/5
\end{align}
and
\begin{align}
    \sigma_2=\int_{-\kappa^{-1/2}}^{\kappa^{-1/2}}\frac{x^2}{2\kappa^{-1/2}}dx=\kappa^{-1}/3.
\end{align}
Plug $k_i$'s and $\sigma_i$'s into $\J_i$ as defined in Lemma \ref{lemma:spectral_radius_bound}
\begin{align}
    \J_{1} := \begin{bmatrix} 
      [\D_1]_{11}^2   &  2[\D_1]_{11}  & 1 \\
    [\D_1]_{11}[\D_2]_{11}   & [\D_{2}]_{11} & 0 \\ 
    [\D_{2}]_{11}^2  & 0 & 0
    \end{bmatrix} 
\end{align}
and
\begin{align}
    \J_{2} := \begin{bmatrix} 
      [\D_1]_{22}^2 + 4(1+\alpha)^2 \gamma^2 \kappa^{-2}/45   &  2[\D_1]_{22}  & 1 \\
    [\D_1]_{22}[\D_2]_{22} -  4\alpha (1+\alpha)  \gamma^2  \kappa^{-2}/45   & [\D_{2}]_{22} & 0 \\ 
    [\D_{2}]_{2}^2 + 4\gamma^2 \alpha^2  \kappa^{-2}/45  & 0 & 0
    \end{bmatrix}. 
\end{align}

For the Rademacher coordinate, the additional noise term cancels out. Clearly $-[\D_2]_{11}$ is an eigenvalue and a simple deduction from the proof for Lemma \ref{lemma:spectral_radius_mu} yields $\|\J_1\|_\rho \leq 1-10^{-1/2}\sqrt{\kappa}$. For $\J_2$, we employ MATLAB to verify following fact.
\paragraph{Fact 3.}
\textit{
For $\kappa\leq0.02,\alpha=2,\beta=1-10^{-1/2}\sqrt{\kappa}$ and $\gamma=\kappa/10$, $\|\J_2\|_\rho\leq0.965.$}

Next we calculate $\boldsymbol{\xi}$,
\begin{align}
    \boldsymbol{\xi} & =\bsigma\bsigma^\top-\diag(\bsigma)^2 \\
    & =\begin{bmatrix} 1 & \kappa^{-1}/3 \\ \kappa^{-1}/3 & \kappa^{-2}/9 \end{bmatrix} - \begin{bmatrix} 1 & 0 \\ 0 & \kappa^{-2}/9 \end{bmatrix}=\begin{bmatrix} 0 & \kappa^{-1}/3 \\ \kappa^{-1}/3 & 0 \end{bmatrix}
\end{align}
whose norm is 
$\|\boldsymbol{\xi}\| = \kappa^{-1}/3$.
And $\|\C\|_{\rho_\epsilon}$ is upper bounded as
\begin{align}
    \|\C\|_{\rho_\epsilon} & \leq \max\{1-10^{-1/2}\sqrt{\kappa},0.95\}+\epsilon+0.09\kappa \\
    & \leq 1-\sqrt{\kappa}/4+\epsilon.
\end{align}
Choosing $\epsilon = 0.05\sqrt{\kappa}$ concludes the proof. 
\end{proof}

\subsubsection{Spectral radius at $\mu$}\label{sec:spectral_radius_mu}
 Here we establish an upper bound for the spectral radius of $3\times3$ matrix $\J_1$ when $\sigma_1=\mu$. Subscript $i$ are temporarily omitted without ambiguity.

\paragraph{Root of characteristic equation.}
In the rest part of this subsection, we will fix $\sigma=\mu$ and set $\alpha,\gamma$ and $\beta$ as defined in Lemma \ref{lemma:gaussian_gaurantees}, on which all claims and lemmas mentioned below are based without special statements. We denote the entries of $\J$ as
\begin{align}
    \J = \begin{bmatrix} 
      D_1^2 + 2(1+\alpha)^2 \gamma^2 \mu^2   &  2 D_1  & 1 \\
    D_1D_2 -  2\alpha (1+\alpha)  \gamma^2 \mu^2  & D_{2} & 0 \\ 
    D_{2} + 2\gamma^2 \alpha^2 \mu^2 & 0 & 0
    \end{bmatrix}.
\end{align}
where
\begin{align}
    D_1=(1+\beta)-(1+\alpha)\gamma\mu,\quad D_2=\alpha\gamma\mu-\beta.
\end{align}
Now, consider the characteristic equation of $\J$
\begin{align}\label{eq:J_charact}
    x^3+bx^2+cx+d=0
\end{align}
where 
\begin{align}
    b &{} :=-D_1^2 + 2(1+\alpha)^2 \gamma^2 \mu^2-D_{2} \\
    c & := 2(1+\alpha)^2 \gamma^2 \mu^2D_{2} - D_{2} - 2\gamma^2 \alpha^2 \mu^2 - D_1^2D_2 +  4\alpha (1+\alpha)  \gamma^2 \mu^2 D_1 \\
    d & :=D_{2}\big(D_{2} + 2\gamma^2 \alpha^2 \mu^2\big).
\end{align}
 The following fact about the roots of Eq. (\ref{eq:J_charact}) can be verified by MATLAB symbolic tools.
\paragraph{Fact 1.}
\textit{For $\mu\leq0.1$, the discriminant of Eq. (\ref{eq:J_charact}), i.e. $\Delta=b^2c^2-4c^3-4b^3d-27d^2+18bcd$, is positive. Therefore the characteristic equation of $\J$ has one real root $x_1$ and two conjugate complex roots $x_2,x_3$.}

We turn our focus into the real root $x_1$. In Lemma \ref{lemma:spectral_radius_mu}, we extend our bounds to absolut values of complex roots. Due to the complexity of closed-form roots of the above cubic equation, we  approximate them. 
\paragraph{Main idea.}
The key idea is based on viewing $\J$ as a perturbation of matrix $\J_s$ which reads as
\begin{align}
    \J_s = \begin{bmatrix} 
      D_1^2   &  2 D_1  & 1 \\
    D_1D_2  & D_{2} & 0 \\ 
    D_{2}  & 0 & 0
    \end{bmatrix}.
\end{align}
For small $\gamma$, we expect that $\J_s-\J$ be close to zero. The characteristic equation of $\J_s$ is
\begin{align}\label{eq:Js_charact}
    y^3+b_sy^2+c_sy+d_s=0
\end{align}
where 
\begin{align}
    b  :=-D_1^2 -D_{2}, \quad  c  := - D_{2} - D_1^2D_2, \quad d :=D_{2}^2 .
\end{align}

Notice that $y=-D_2$ is a root of Eq. (\ref{eq:Js_charact}). These simple solutions provide us proper estimates of the real root of Eq. (\ref{eq:J_charact}).

Let $z=x-y$. Plugging $x=y+z$ and $y=-D_2$ into Eq. (\ref{eq:J_charact}) and Eq. (\ref{eq:Js_charact}) respectively and subtracting from both sides leads to a cubic equation about $z$
\begin{align}\label{eq:z_cubic}
    z^3+pz^2+qz+r=0
\end{align}
where 
\begin{align}
    p := b-3D_2, \quad q :=-2bD_2+3D_2^2+c, r :=(b-b_s)D_2^2-(c-c_s)D_2+d-d_s.
\end{align}
Let  $z_1,z_2$ and $z_3$ be roots of above cubic equation. These roots relate to those of Eq. (\ref{eq:J_charact}) as
$
 x_i =-D_2+z_1
$
for $i=1,2,3$. A natural consequence of this is $z_1$ is real and $z_2$, $z_3$ are conjugate complex (considering that $D_2$ is real). 
\paragraph{Properties of the real root.}
Let's focus on the real root $z_1$. MATLAB verification indicates coefficients $p$, $q$ and $r$ obey the following property.

\paragraph{Fact 2.}
\textit{
For $\mu\leq0.1$, coefficients in Eq. (\ref{eq:z_cubic}) satisfy $p,\ q> 0$ and $r<0$.
}  

Given the above fact, we prove that the real root $z_1$ is positive. 
\begin{lemma}
For $\mu\leq0.1$, the real root $z_1$ of Eq.  (\ref{eq:z_cubic}) is positive.
\end{lemma}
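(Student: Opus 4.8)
The plan is to exploit the sign pattern of the coefficients supplied by Fact~2, namely $p,q>0$ and $r<0$, together with the structural fact inherited from Fact~1 that the shifted cubic \eqref{eq:z_cubic} has exactly one real root $z_1$ (its companions being a complex-conjugate pair). Writing $g(z):=z^3+pz^2+qz+r$, the whole argument reduces to locating the sign of $g$ at a couple of reference points and then matching the root that is produced with the distinguished root $z_1$.

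First I would evaluate $g$ at $z=0$, where $g(0)=r<0$ by Fact~2, and compare against the behaviour at infinity: since $g$ is monic of odd degree, $g(z)\to+\infty$ as $z\to+\infty$. Continuity of $g$ and the intermediate value theorem then produce a root $z^\ast\in(0,\infty)$, i.e.\ a genuinely \emph{positive} real root. The next step is to invoke uniqueness: Fact~1 guarantees that \eqref{eq:z_cubic} has only the single real root $z_1$, so the positive root $z^\ast$ just located must coincide with $z_1$, giving $z_1=z^\ast>0$, which is exactly the claim.

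As a cross-check that does not lean on the precise real/complex count, I would run Descartes' rule of signs. The coefficient sequence $(1,p,q,r)=(+,+,+,-)$ has exactly one sign change, so $g$ admits precisely one positive real root; substituting $-z$ yields $-z^3+pz^2-qz+r$ with sign pattern $(-,+,-,-)$ and two sign changes, hence zero or two negative real roots. Together with Fact~1 (a single real root overall), the negative count is forced to be $0$, and the lone real root must be the positive one. This reinforces the conclusion from a purely algebraic angle.

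I do not anticipate a genuine analytic obstacle here, since the core is a one-line sign computation; the only point demanding care is the logical link between the root delivered by the intermediate value theorem and the labelled root $z_1$. That link rests entirely on the uniqueness of the real root asserted in Fact~1, so the thing that really must be secured (numerically, as the paper does via MATLAB) is that the sign conditions $p,q>0$ and $r<0$ of Fact~2 hold uniformly throughout the regime $\mu\le 0.1$.
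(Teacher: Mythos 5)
Your argument is correct, and it reaches the conclusion by a genuinely different (though equally elementary) mechanism than the paper. The paper's proof is purely algebraic: it applies Vieta's formula $z_1 z_2 z_3 = -r$ to the monic cubic, observes that $z_2 z_3 = |z_2|^2 > 0$ because $z_2, z_3$ are a nonreal conjugate pair, and concludes $z_1 = -r/|z_2|^2 > 0$ directly from $r<0$. You instead argue analytically: $g(0)=r<0$ together with $g(z)\to+\infty$ yields a positive root by the intermediate value theorem, which must be $z_1$ by uniqueness of the real root; the Descartes cross-check is a nice independent confirmation. Both routes hinge on exactly the same two inputs --- the sign $r<0$ from Fact~2 and the one-real-root/conjugate-pair structure inherited from Fact~1 --- and neither your main IVT step nor the paper's Vieta step actually needs $p,q>0$ (only your Descartes aside uses those signs). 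The paper's version is marginally more self-contained in that it also rules out a zero root via $r\neq 0$ and produces the explicit identity $z_1=-r/|z_2|^2$, which quantifies how small $z_1$ is; your version is perhaps more transparent about \emph{why} the root is positive. The one point you correctly flag as the real burden --- verifying that the sign conditions of Fact~2 hold uniformly for $\mu\le 0.1$ --- is indeed where all the work lives, and the paper delegates it to symbolic computation exactly as you propose.
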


\begin{proof}
Consider three roots $z_1,\ z_2$ and $z_3$ of Eq. (\ref{eq:z_cubic}). We calculate
\begin{align}
    (z-z_1)(z-z_2)(z-z_3) & =z^3-(z_1+z_2+z_3)z^2+(z_1z_2+z_2z_3+z_3z_1)z-z_1z_2z_3 \\
    & =z^3+pz^2+qz+r
\end{align}
which boils down to $z_1z_2z_3=-r$. The existence of zero roots is ruled out since $r\ne0$. The last fact implies that
\begin{align}
    z_1=\frac{z_1z_2z_3}{|z_2|^2}=-\frac{r}{|z_2|^2}>0
\end{align}
since $z_2$ and $z_3$ are conjugate complex roots.
\end{proof}

\paragraph{Cubic root approximation.} 
Now, we establish an upper-bound on $z_1$.
\begin{lemma}
For $\mu\leq0.1$, $0\leq z_1\leq-r/q$ holds.
\end{lemma}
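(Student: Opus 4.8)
The plan is to exploit the defining cubic relation for $z_1$ directly, combined with the sign information already extracted in the preceding lemma and in Fact 2. Since $z_1$ is a root of Eq.~\eqref{eq:z_cubic}, I would start from the identity $z_1^3 + p z_1^2 + q z_1 + r = 0$ and isolate the linear term, rewriting it as
\[
    q z_1 = -r - z_1^2 (p + z_1).
\]
This reduces the whole claim to controlling the sign of the single correction term $z_1^2(p+z_1)$ that is subtracted on the right-hand side.

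Next I would invoke the two facts that are already available in this regime. The previous lemma guarantees $z_1 \geq 0$ (indeed $z_1 > 0$), and Fact 2 supplies $p > 0$, $q > 0$, and $r < 0$ for $\mu \leq 0.1$. Together these give $p + z_1 > 0$ and $z_1^2 \geq 0$, so $z_1^2(p+z_1) \geq 0$. Substituting this nonnegativity into the displayed identity yields $q z_1 \leq -r$.

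Finally, since $q > 0$, dividing through by $q$ produces the upper bound $z_1 \leq -r/q$; chaining this with the lower bound $z_1 \geq 0$ from the previous lemma gives the desired $0 \leq z_1 \leq -r/q$. I would expect no genuine obstacle here: the argument is essentially a one-line sign manipulation once the cubic identity is rearranged. The only point demanding care is confirming that all the required sign hypotheses — positivity of $p$ and $q$, negativity of $r$, and nonnegativity of $z_1$ — hold simultaneously throughout the regime $\mu \leq 0.1$, which is exactly what Fact 2 and the preceding lemma certify, so the bound $-r/q$ is automatically positive and consistent with $z_1 \geq 0$.
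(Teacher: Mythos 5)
Your proof is correct, and it is actually more direct than the paper's. The paper treats the cubic $z^3+pz^2+qz+r=0$ as a quadratic in $z$ with perturbed constant term $r+z_1^3$, solves that quadratic explicitly, bounds the discriminant using $r+z_1^3>r$ (wait—rather $-4p(r+z_1^3)<-4pr$), and then applies $\sqrt{1+x}\leq 1+x/2$ to reduce $\frac{-q+\sqrt{q^2-4pr}}{2p}$ to $-r/q$. You instead isolate the linear term, writing $qz_1=-r-z_1^2(p+z_1)$, and simply drop the nonnegative correction $z_1^2(p+z_1)\geq 0$ (which holds since $z_1>0$ from the preceding lemma and $p>0$ from Fact~2), then divide by $q>0$. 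The two arguments rest on exactly the same inputs — Fact~2's sign conditions and the positivity of $z_1$ — and yield the same bound, but yours avoids the quadratic formula and the square-root estimate entirely, which also sidesteps the notational slips in the paper's version (the statement ``$r>0$'' where $r<0$ is meant, and the stray $b,c,d$ in place of $p,q,r$). The only hypothesis you should state explicitly is $q>0$ at the division step, which you do; the negativity of $r$ is not even needed for the inequality itself, only to confirm the bound is a positive number.
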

\begin{proof}
The last fact implies $p>0,q>0,r>0$ and $z_1>0$. By rearranging of terms in Eq. (\ref{eq:z_cubic}), we have 
\begin{align}\label{eq:z_quad}
    pz^2+q^z=-(r+z_1^3)
\end{align}
of which $z_1$ is still a root. Since $z_1>0$ then $d+z_1^3=-bz_1^2-cz_1<0$ holds.
\begin{align}
    z_1\overset{\textrm{(A)}}{\leq} \frac{-q+\sqrt{q^2-4p(d+z_1^3)}}{2p}\overset{\textrm{(B)}}{<}\frac{-q+\sqrt{q^2-4pd}}{2p}\leq-\frac{r}{q}
\end{align}
where (A) is due to $0<-4p(d+z_1^3)<-4pd$ and (B) comes from inequality $\sqrt{1+x}\leq1+x/2$ for $x\geq 0$.
\end{proof}

Since both $q$ and $r$ are polynomials in $\sigma=\mu$ and other parameters, a rational-form upper bound on $z_1$ can be established. This allows establishing a bound on $x_1=z_1-D_2$ by analyzing $m$ and $n$.

\begin{lemma}\label{lemma:upper_bound_x}
  For $\mu\leq0.1$, $-D_2\leq x_1\leq1-10^{-1/2}\sqrt{\mu}$ holds.
\end{lemma}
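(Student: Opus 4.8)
The plan is to reduce both desired inequalities on the real eigenvalue $x_1$ to the already-established control of the shifted root $z_1$. Recall from the construction leading to Eq.~\eqref{eq:z_cubic} that the real roots satisfy $x_1 = z_1 - D_2$, and that the two preceding lemmas give $0 \le z_1 \le -r/q$. The lower bound is then immediate: since $z_1 \ge 0$, we get $x_1 = z_1 - D_2 \ge -D_2$, which is exactly the left inequality of the claim, so no further work is needed there.

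For the upper bound I would first make $-D_2$ explicit. Substituting $\alpha = 2$, $\gamma = 0.1$ and $\beta = 1 - 10^{-1/2}\sqrt{\mu}$ into $D_2 = \alpha\gamma\mu - \beta$ gives
\begin{align*}
    -D_2 = 1 - 10^{-1/2}\sqrt{\mu} - 0.2\mu .
\end{align*}
Hence, through $x_1 = z_1 - D_2$, the target inequality $x_1 \le 1 - 10^{-1/2}\sqrt{\mu}$ is \emph{equivalent} to $z_1 \le 0.2\mu$. Since the preceding lemma already supplies $z_1 \le -r/q$, it suffices to prove the sharper estimate
\begin{align*}
    -\frac{r}{q} \le 0.2\mu \qquad \text{for } \mu \le 0.1 .
\end{align*}

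To discharge this last inequality I would substitute the parameter values into the closed forms of $q$ and $r$. The coefficient differences $b-b_s$, $c-c_s$, $d-d_s$ entering $r$ each carry a factor $\gamma^2\mu^2$, so both $q$ and $r$ collapse to explicit expressions in $\mu$ and $\sqrt{\mu}$; by Fact~2 we have $q>0$, so (after clearing the positive denominator) the estimate becomes the polynomial inequality $0.2\mu\,q + r \ge 0$. Setting $t=\sqrt{\mu}$ turns this into a single-variable polynomial inequality on $t\in(0,\sqrt{0.1}\,]$. This polynomial bound is the main obstacle — it is not structurally illuminating and I would verify it with the same symbolic/MATLAB computation already used for Facts~1--3 rather than by hand. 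Combining the resulting upper bound $z_1 \le 0.2\mu$ with the lower bound $z_1 \ge 0$ yields $-D_2 \le x_1 \le 1 - 10^{-1/2}\sqrt{\mu}$, completing the proof.
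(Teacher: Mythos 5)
Your proposal is correct and follows essentially the same route as the paper: both reduce the upper bound, via $x_1 = z_1 - D_2$ and the explicit value $-D_2 = 1 - 10^{-1/2}\sqrt{\mu} - 0.2\mu$, to showing $z_1 \le -r/q \le 0.2\mu$, and then verify a polynomial inequality in the parameters. The only difference is presentational: the paper substitutes $\theta = 1-\beta$ (so $\theta^2 = 0.1\mu$) to write $q$ and $r$ as explicit polynomials in $\theta$ with hand-checkable bounds $q \ge 2\theta^2$ and $-r \le 4\theta^4$, giving $z_1 \le 2\theta^2 = 0.2\mu$ directly, whereas you defer that final polynomial estimate to a symbolic computation.
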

\begin{proof}
The lower bound is immediate. Let's consider the compact notation $\theta=1-\beta$. Our choice of parameters leads to $\theta^2=0.1\mu\leq0.01$. Then, we write $q,r$ as polynomials in $\theta$
\begin{align}
    q & = 54\theta^6 + 51\theta^5 - 23\theta^4 - 9\theta^3 + 3\theta^2\geq2\theta^4>0
\end{align}
and 
\begin{align}
    r & = - 24\theta^7 - 4\theta^6 + 16\theta^5 - 4\theta^4\geq- 4\theta^4<0.
\end{align}
Now we calculate
\begin{align}
    z_1\leq-r/q=\frac{4\theta^4}{2\theta^2}\leq2\theta^2.
\end{align}
This leads to an upper bound on $x_1$
\begin{align}
    x_1=-D_2+z_1\leq 1-\theta-2\theta^2+2\theta^2\leq1-\theta=1-10^{-1/2}\sqrt{\mu}.
\end{align}
\end{proof}

\paragraph{Bound for spectral radius.}
So far, we have proven the real eigenvalue is bounded by $1-\mathcal{O}(\sqrt{\mu})$ It remains to bound complex eigenvalues.
\begin{lemma} \label{lemma:spectral_radius_mu}
For $\mu\leq0.1$, $\alpha=2$, $\beta=1-10^{-1/2}\sqrt{\mu}$ and $\gamma=0.1$, $\|\M\|_\rho\leq1-10^{-1/2}\sqrt{\mu}$.
\end{lemma}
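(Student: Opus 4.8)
The plan is to bound the spectral radius $\|\J\|_\rho=\rho(\J)=\max_i|x_i|$ of the $3\times 3$ matrix $\J=\J_1$ (evaluated at $\sigma=\mu$), where $x_1,x_2,x_3$ are its eigenvalues, by treating the real root and the complex pair separately. By Fact~1, for $\mu\le 0.1$ these eigenvalues consist of one real root $x_1$ and a conjugate pair $x_2,\,x_3=\overline{x_2}$. The real root is already handled: Lemma~\ref{lemma:upper_bound_x} gives $-D_2\le x_1\le 1-10^{-1/2}\sqrt{\mu}$. Since the parameter choice $\alpha=2,\ \gamma=0.1,\ \beta=1-10^{-1/2}\sqrt{\mu}$ makes $D_2=\alpha\gamma\mu-\beta<0$ for $\mu\le 0.1$, this two-sided bound in particular certifies $x_1>0$. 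Thus the remaining task is to bound the common modulus $|x_2|=|x_3|$ of the conjugate pair and then take the maximum.

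For the conjugate pair I would use the Vieta constant-term identity $x_1x_2x_3=-d$ together with $x_2x_3=|x_2|^2$. Expanding $\det\J$ along the last column gives $d=D_2\big(D_2^2+2\gamma^2\alpha^2\mu^2\big)$, so that $-d=-D_2\big(D_2^2+2\gamma^2\alpha^2\mu^2\big)>0$ because $D_2<0$. Hence
\begin{align*}
|x_2|^2 \;=\; \frac{-d}{x_1} \;=\; \frac{-D_2\big(D_2^2+2\gamma^2\alpha^2\mu^2\big)}{x_1}.
\end{align*}
Since the numerator $-d$ is positive, an upper bound on $|x_2|^2$ follows from a \emph{lower} bound on $x_1$, and Lemma~\ref{lemma:upper_bound_x} supplies exactly $x_1\ge -D_2>0$. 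Substituting this yields the clean estimate
\begin{align*}
|x_2|^2 \;\le\; \frac{-D_2\big(D_2^2+2\gamma^2\alpha^2\mu^2\big)}{-D_2} \;=\; D_2^2+2\gamma^2\alpha^2\mu^2.
\end{align*}

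It then remains to verify the scalar inequality $D_2^2+2\gamma^2\alpha^2\mu^2\le\big(1-10^{-1/2}\sqrt{\mu}\big)^2$ for all $\mu\le 0.1$. Writing $|D_2|=\beta-\alpha\gamma\mu=1-10^{-1/2}\sqrt{\mu}-0.2\mu$, the difference of squares factorizes as
\begin{align*}
\big(1-10^{-1/2}\sqrt{\mu}\big)^2-D_2^2 \;=\; 0.2\mu\big(2-2\cdot 10^{-1/2}\sqrt{\mu}-0.2\mu\big),
\end{align*}
which is of order $0.4\mu$, whereas the added noise term $2\gamma^2\alpha^2\mu^2=0.08\mu^2$ is of order $\mu^2$; the former dominates comfortably for $\mu\le 0.1$, giving $|x_2|\le 1-10^{-1/2}\sqrt{\mu}$. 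Combining this with the bound on $x_1$ from Lemma~\ref{lemma:upper_bound_x} yields $\rho(\J)=\max\{|x_1|,|x_2|,|x_3|\}\le 1-10^{-1/2}\sqrt{\mu}$, as claimed.

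The main obstacle is this final scalar inequality, which is the only genuinely quantitative step; it becomes elementary once $|D_2|$ is expanded as a polynomial in $\sqrt{\mu}$, and (consistently with the paper's symbolic verification of Facts~1 and~2) can be confirmed directly over the compact range $\mu\in(0,0.1]$. Everything else is structural: Fact~1 supplies the root pattern, Vieta converts $\det\J$ into the product of roots, and the lower bound $x_1\ge -D_2$ from Lemma~\ref{lemma:upper_bound_x} is precisely what turns the product identity into an upper bound on the complex modulus.
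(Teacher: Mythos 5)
Your proof is correct and takes essentially the same route as the paper: bound the real root by Lemma~\ref{lemma:upper_bound_x}, then control the conjugate pair via $|x_2|^2=\det\J/x_1\le\det\J/|D_2|$ using the lower bound $x_1\ge -D_2$. The only difference is the final quantitative step: the paper expands $\det\J$ as a degree-6 polynomial in $\theta=1-\beta$ and certifies $\det\J\le|D_2|(1-\theta)^2$ by MATLAB, whereas you use the exact factorization $\det\J=|D_2|\big(D_2^2+2\gamma^2\alpha^2\mu^2\big)$ coming from the constant term of the characteristic polynomial, which reduces everything to the elementary hand-checkable inequality $D_2^2+0.08\mu^2\le\big(1-10^{-1/2}\sqrt{\mu}\big)^2$ — a small but genuine simplification that also implicitly corrects the paper's $(3,1)$-entry typo ($D_2$ in place of $D_2^2$).
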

\begin{proof}
We reuse the notation $\theta=1-\beta$. The determinant of $\J$ read as 
\begin{align}
    \det\J & =1-3\theta-3\theta^2+11\theta^3+14\theta^4-20\theta^5-24\theta^6
\end{align}
is upper bounded by $|D_2|(1-\theta)^2$, which can be verified by MATLAB or manual calculation. 
On the other hand, we have $x_2x_3=|x_2|^2>0$ since $x_2$ and $x_3$ are conjugate. Consider $x_1>-D_2>0$ and calculate the spectral radius 
\begin{align}
    \|\J\|_\rho & =\max\{x_1,|x_2|\}\max\{x_1,\sqrt{\frac{\det\J}{x_1}}\} \\
    &\leq\max\{x_1,\sqrt{\frac{|D_2|(1-\theta)^2}{|D_2|}}\}\overset{\textrm{(A)}}{\leq}1-\theta=1-10^{-1/2}\sqrt{\mu}
\end{align}
where (A) comes from $x_1\leq1-\theta$ as shown in Lemma \ref{lemma:upper_bound_x}.
\end{proof}

\end{document}